\newcommand{\nocontentsline}[3]{}
\newcommand{\tocless}[2]{\bgroup\let\addcontentsline=\nocontentsline#1{#2}\egroup}
\numberwithin{equation}{section}
\newtheorem{theorem}{Theorem}[section]
\newtheorem{lemma}[theorem]{Lemma}
\newtheorem{proposition}[theorem]{Proposition}
\newtheorem{corollary}[theorem]{Corollary}
\newtheorem{remark}[theorem]{Remark}
\newtheorem{definition}[theorem]{Definition}
\theoremstyle{definition}
\renewcommand{\tilde}{\widetilde}          
\DeclareMathSymbol{\leqslant}{\mathalpha}{AMSa}{"36} 
\DeclareMathSymbol{\geqslant}{\mathalpha}{AMSa}{"3E} 
\DeclareMathSymbol{\eset}{\mathalpha}{AMSb}{"3F}     
\renewcommand{\leq}{\;\leqslant\;}                   
\renewcommand{\geq}{\;\geqslant\;}                   
\newcommand{\C}{\mathbb{C}}
\newcommand{\A}{\mathbb{A}}
\renewcommand{\H}{\mathbb{H}}
\newcommand{\D}{\mathbb{D}}
\newcommand{\R}{\mathbb{R}}
\newcommand{\Z}{\mathbb{Z}}
\newcommand{\N}{\mathbb{N}}
\newcommand{\cjd}{\rangle}
\newcommand{\cjg}{\langle}
\def\l{\mathbf{l}}
\def\k{\mathbf{k}}
\def\X{\mathbf{X}}
\def\Y{\mathbf{Y}}
\def\H{\mathbf{H}}
\def\bP{\mathbf{P}}
\def\bR{\mathbf{R}}
\def\P{\mathbb{P}}
\def\E{\mathbb{E}}
\newcommand{\pl}{\partial}
\newcommand{\bbar}{\overline}
\newcommand{\mc}{\mathcal}
\newcommand{\la}{\lambda}
\def\eps{\varepsilon}
\def\T{\mathbb{T}}
\def\bi{\begin{itemize}}
\def\ei{\end{itemize}}
\def\bnum{\begin{enumerate}}
\def\enum{\end{enumerate}}
\def\<#1{\langle #1 \rangle}
\newcommand{\caD}{{\mathcal D}}
\def\cF{\mathcal{F}}
\author{Colin Guillarmou}
\address{Universit\'e Paris-Saclay, CNRS,  Laboratoire de math\'ematiques d'Orsay, 91405, Orsay, France.}
\email{colin.guillarmou@universite-paris-saclay.fr}
\author{Trishen S. Gunaratnam}
\address{TIFR, Mumbai, Maharashtra 400005, India.}
\email{trishen@math.tifr.res.in}
\address{ICTS-TIFR, Bengaluru, Karnataka 560089, India}
\email{trishen.gunaratnam@icts.res.in}
\author{Vincent Vargas}
\address{Universit\'e de Gen\`eve, Section des Math\'ematiques et D\'epartment de Physique Th\'eorique, Rue de Conseil-G\'en\'eral 7-9, 1201, Geneva, Switzerland.}
\email{vincent.vargas@unige.ch}
\title{2d Sinh-Gordon model on the infinite cylinder}
\begin{document}
 
 \begin{abstract}
For  $R>0$, we give a rigorous probabilistic construction on the cylinder $\R \times (\R/ (2\pi R \Z))$  of the (massless) Sinh-Gordon model. In particular we define the $n$-point correlation functions of the model and show that these exhibit a scaling relation with respect to $R$. The construction, which relies on the massless Gaussian Free Field, is based on the spectral analysis of a quantum operator associated to the model. Using the theory of Gaussian multiplicative chaos, we prove that this operator has discrete spectrum and a strictly positive ground state. 
 \end{abstract}
 
 \maketitle

\normalsize



\section{Introduction}

Over the past 15 years, there has been huge progress in the rigorous understanding of two-dimensional conformal field theories (CFTs) defined formally via a path integral with exponential interactions. An important class of these models are non-affine Toda theories associated to complex, semisimple Lie algebras $\mathfrak g$. These models arise as the quantization of the classical Toda field theories with potentials of the form
\begin{equation}
\sum_{i=1}^n e^{\gamma x \cdot e_i }, \qquad x \in \mathbb R^n,
\end{equation}
where $\gamma \in \mathbb R$ (or, more generally, $\mathbb C$) is a coupling constant, $(e_i)_{i=1,\dots,n}$ and $\cdot$ are a special choice of basis vectors and scalar product on a $n$-dimensional vector space, related to the representation theory of $\mathfrak g$ (in particular, their Cartan subalgebras). They are well-studied in the physics literature thanks to the emergence of higher order symmetry, so-called $W$-algebras, which contain the usual Virasoro algebra of CFT. The case $\mathfrak g = \mathfrak{sl}_2$ corresponds to the simplest such theory: the celebrated Liouville CFT introduced by Polyakov \cite{Polyakov81}. The probabilistic construction  of the path integral formulation of these CFTs when $\gamma \in (0,2)$ has been developed thoroughly (see \cite{DKRV16, GRV, CRV}). In the case of Liouville, a series of works \cite{KRV_DOZZ, GKRV,GKRV21_Segal} has shown the equivalence between the probabilistic construction and the bootstrap formalism of physics \cite{DornOtto94,Zamolodchikov96}. The case of $\gamma \in i(0,\sqrt{2})$ has been also recently developed \cite{GKR} in the compactified version.

On the other hand, in the case of affine Toda theories, much less is known on both mathematical and physical fronts. Affine models correspond to quantizations of the classical field theories with potentials of the form
\begin{equation}
\sum_{i=1}^n (e^{\gamma x \cdot e_i} + e^{-\gamma x\cdot e_i}), \qquad x \in \mathbb R^n,	
\end{equation}
i.e.\ including the reflected term. In this case, the quantization leads to a quantum field theory (QFT) that is not expected to exhibit conformal invariance (i.e.\ it is not a CFT), but their significance in physics stems from the expectation that they remain \emph{integrable} in an appropriate sense. The simplest QFT in this class is the Sinh-Gordon model and corresponds to the choices $\gamma \in \mathbb R$ and $\mathfrak g = \mathfrak{sl}_2$. Amongst its most interesting features are the existence of an isolated first eigenvalue in its spectrum, leading to a mass gap and exponential decay of correlations -- this is in stark contrast to the purely continuous spectrum of Liouville. Furthermore, the study of its finer properties is an active area of research and controversy in physics. We refer to \cite{Mussardo21}, and references therein, for a more in-depth survey of physics results and research directions on the Sinh-Gordon model. The Sinh-Gordon model is closely related to the Sine-Gordon model, which corresponds to choosing $\gamma = i\beta, \beta \in \mathbb R$. The Sine-Gordon model is ubiquitous in statistical physics. For example, it arises in the Coleman correspondence (see \cite{Bauerschmidt}). Let us finally also mention that affine Toda models associated to exceptional Lie algebras ($E_8$, $E_7$, etc) arise in consideration of near-critical scaling limits of planar statistical physics models, such as the Ising model, see \cite[Chapter 16]{Mus20}.

The purpose of this article is to construct the Sinh-Gordon model on an infinite cylinder\footnote{In the physics literature, this is sometimes called the finite volume Sinh-Gordon model.}, and study some fundamental properties of its spectrum and correlations. On the mathematical side, the article \cite{BGK} was developed to define the theory using a random matrix formalism (based on a conjecture of Lukyanov \cite{Luk}), but the method seems to be very hard to implement on a technical level (in comparison to the path integral approach of this paper). Also, one should mention that using stochastic quantization the paper \cite{Bara} defines the Sinh-Gordon model in an infinite cylinder, or even the full plane,  with an extra mass term compared to the construction developed here. Inclusion of a mass term generally destroys the integrability properties of the model and it is stressed in  \cite{Bara} that removing the extra mass term is an interesting and challenging problem. We give a direct construction and study the properties of the massless model. It would be very interesting to relate the approach of this paper to the approach of \cite{BGK}, but at present this seems out of reach.

We now present the main results of this paper.

\subsection{The Sinh-Gordon model}

Let $\mu > 0$ and $\gamma \in (0,2)$. Denote by $\mc{C}_R := \R \times \T_R$ the infinite cylinder of width $R>0$, with $\T_R:=\R/2\pi R\Z$ the circle of radius $R$. 
The Sinh-Gordon model with parameters $\mu$ (called the cosmological constant) and $\gamma$ on $\mc{C}_R$ is the probability measure $\langle \cdot \rangle$ formally defined via the following path integral
\begin{equation} \label{eq: formal def sinh}
\langle F \rangle=
\frac{1}{Z} \int F(\phi) e^{- \int_{\mc{C}_R}(\frac 1{4\pi} |\nabla \phi|^2 + 2\mu \cosh(\gamma \phi)) dx} D\phi,
\end{equation}
where the integral should be over some space $\Sigma$ of fields (functions or distributions) on the cylinder $\mc{C}_R$ equipped with a measure $D\phi$ that represents the ``uniform measure'',
 $F$ is a bounded and measurable function on the space $\Sigma$ of fields, and $Z$ is a normalisation constant making $D\phi/Z$ a probability measure.
In this context, the main objects of interest are the so-called vertex correlations associated to the Sinh-Gordon model, which are formally defined via the formula
\begin{equation}\label{correlationsintro}
\cjg \prod_{j=1}^m V_{\alpha_j}(z_j)\cjd
=	
\frac{1}{Z} \int \prod_{j=1}^m e^{\alpha_j \phi(z_j)} e^{- \int_{\mc{C}_R} ( \frac 1{4\pi} |\nabla \phi|^2 +2\mu \cosh(\gamma \phi) ) dx} D\phi,
\end{equation}
where above $z_1,\dots,z_m$ is a set of $m$ disjoint points in $\mc{C}_R$ and $\alpha_1,\dots,\alpha_m\in \R$ some weights. In the physics literature the points $z_j$ are called 
insertions with weights $\alpha_j$. The vertex correlations are natural observables for the theory since, as Laplace transforms of $\phi$, they encode the distribution of the field $\phi$ and we will also see that they enjoy natural scaling properties with respect to $R$. 

Since $D\phi$ does not exist mathematically, the above definition \eqref{eq: formal def sinh} is indeed formal and making sense of the path integral is not straightforward. The purpose of this article is to define \eqref{eq: formal def sinh} rigorously using probability theory. It turns out that the random field $\phi$ underlying the construction will live in a negative Sobolev space $\Sigma =H^{-s}(\mc{C}_R)$,  as is customary in the probabilistic approach to quantum field theory (also called constructive quantum field theory). Therefore, the averages of the form \eqref{eq: formal def sinh} will be defined for bounded $F: H^{-s}(\mc{C}_R) \rightarrow \R$ and the correlations \eqref{correlationsintro} will require a renormalisation procedure since the field $\phi$ will not be defined pointwise. In order to present a concrete probabilistic construction of the above formal path integrals, we must first introduce the probabilistic construction of the Hilbert space and the Hamiltonian of the theory for $R=1$. We will see that the general case can be deduced by a scaling argument. The construction is based on the \emph{Gaussian Free Field} on the circle. 

\vspace{0.1 cm}

\noindent
{\bf Convention}: In the sequel of the paper, when working in the $R=1$ case, we will omit the subscript or superscript $R$ in the notations. For instance $\mc{C}_1$ will be denoted by $\mc{C}$.

\subsection{Main result on the Sinh-Gordon Hamiltonian}

The construction of the measure and of the correlation functions is based on the theory of \emph{Gaussian multiplicative chaos} and the  spectral resolution of the Hamiltonian of the Sinh-Gordon model, that we now explain. 
The Hamiltonian is a self-adjoint operator acting on a Hilbert space $\mc{H}$, that can be obtained as a generator of a Markov semigroup. As we shall explain later, the theory on the cylinder $\mc{C}_R$ can be reduced by a scaling argument to the theory for $\mc{C}$. We thus assume $R=1$ for now, and we will write the general case in terms of the $R=1$ case.

The Hilbert space $\mc{H}$ (with scalar product $\langle \cdot , \cdot \rangle_{\mc{H}}$) is defined as 
\[ \mc{H}=L^2(H^{-s}(\T), \mu_0).\]
Above, $\T=\R/2\pi \Z$, $H^{-s}(\T)$ is the Sobolev space of order $-s<0$ on the circle $\T$, and $\mu_0$ is the law\footnote{In other words, $\mu_0$ is the pushforward by $c+\varphi$ of the infinite mass measure $dc\otimes \mathbb{P}_{\mathbb{T}}$ with $ \mathbb{P}_{\mathbb{T}}=\prod_{n \geq 1}\frac{1}{2\pi} e^{-\frac 12(x_n^2 + y_n^2)} dx_n dy_n$ on $\R\times (\R^2)^{\N^\ast}$ with $\N^\ast= \N \setminus \lbrace 0 \rbrace$ equipped with its natural Borel $\sigma$-algebra.} of the random variable $c+\varphi$ where $c$ is a constant distributed with respect to the Lebesgue measure $dc$ and
\begin{equation}
\varphi(\theta)
=
\sum_{n \neq 0} \varphi_{n} e^{ i n \theta},  \quad\theta \in \T, \quad \varphi_{n} 
:= 
\frac{x_n + i y_n}{2 \sqrt n} , \qquad \varphi_{-n} = \overline{\varphi_{n}},
\end{equation}
with $x_n,y_n$ i.i.d. Gaussians of mean $0$ and variance $1$. The random variable $\varphi$, called the Gaussian Free Field on $\T$, has covariance kernel $-\log |e^{i\theta}-e^{i\theta'}|$ and belongs to the distributions with no constant (or zero) mode $H_0^{-s}(\T):=\{u\in H^{-s}(\T)\,|\, \hat u(0) =0\}$ (where $\hat u$ denotes the Fourier transform on $\T$) almost surely for all $s>0$.

For fixed $\gamma\in (0,2)$ and $\mu>0$, we define the \emph{Sinh-Gordon Hamiltonian}  $\mathbf{H}$ as the generator of a Markov contraction 
semigroup acting on $\mc{H}$. The semigroup ${\bf T}_t=e^{-t{\mathbf H}}$ has the expression  
\begin{equation}
{\mathbf  T}_t F(c+\varphi)
=
\mathbb E_{\varphi}[F(c+B_t+\varphi_t)  e^{-\mu \sum_{\sigma = \pm 1}\int_{[0,t]\times \mathbb T} e^{\gamma \sigma c} M^\sigma_\gamma(dsd\theta)}],
\end{equation}
where $B_t$ and $\varphi_t$ are two Markov processes with values in $H^{-s}(\T)$, $\mathbb E_\varphi$ denotes expectation conditional on $\varphi$ and, for $\sigma=\pm 1$,  $M_\gamma^\sigma$ is a random measure on the cylinder $\mc{C}_1$ called Gaussian multiplicative chaos, introduced by Kahane \cite{Kahane85}.
The process $B_t+\varphi_t$ corresponds to the decomposition of the Gaussian Free Field in the 
time slices $\{t\}\times \T$ of the cylinder $\mc{C}_1$. More precisely, $B_t$
is a standard Brownian motion (with covariance $\mathbb E[B_sB_t]=\min(s,t)$) and $\varphi_t$ is an independent Gaussian process with $0$ average on $\T$, $\varphi_{t=0}=\varphi$. 
The invariant measure of the process $(B_t+\varphi_t)$ is $\mu_0$ and under the invariant measure the covariance kernel of $\varphi_t$ is  
\[\mathbb E[ \varphi_t(\theta) \varphi_{t'}(\theta')]= \log \frac{\max(e^{-t'}, e^{-t})}{|e^{-t'} e^{i \theta'}- e^{-t} e^{i \theta}|},\]
see Section \ref{sec: GFF}. The Gaussian multiplicative chaos measure is defined by renormalisation 
\[ M_\gamma^{\sigma}(dtd\theta)=\lim_{N\to \infty} 
e^{\sigma \gamma (B_t + \varphi_t^{N})-\frac{\gamma^2}{2}\mathbb E[ (\varphi_t^{N})^2]} dtd\theta, \]
where $\varphi_t^N(\theta):=\sum_{n=-N}^N\varphi_{t,n}e^{in\theta}$ is the truncated series of 
$\varphi_t(\theta)=\sum_{n\in \Z}\varphi_{t,n}e^{in\theta}$.

When acting on  the linear span $\mc{S}$ of functions of the form $F = F(x_1,y_1, \dots, x_N,y_N) \in C^\infty((\R^2)^N)$ for some $N \in \N$ and such that all its partial derivatives have at most polynomial growth at infinity, the generator of this semigroup has the form 
\begin{equation}\label{intro_H} 
{\bf H}=-\frac{\pl_c^2}{2}+\sum_{n=1}^\infty n(\pl_{x_n}^*\pl_{x_n}+\pl_{y_n}^*\pl_{y_n})+\mu (e^{\gamma c}V_+ +e^{-\gamma c}V_-),
\end{equation}
where $V_\pm$ are non-negative unbounded operators, which in the case $\gamma<\sqrt{2}$, are multiplication operators by a potential $V_\pm \in L^p(H^{-s}_0(\T))$. These potentials can be expressed in terms of a Gaussian Multiplicative Chaos on the unit circle $\T$: $V_\pm(\varphi)$ are the mass on $\T$ of  Gaussian Multiplicative Chaos measures on $\T$ associated to the random variables $\pm \varphi$ (see \eqref{Upm}). 
Although related, we warn the reader that $M_\gamma^\pm$ and $V_\pm$ are not the same Gaussian Multiplicative Chaos, as the first is $2$-dimensional while the second is $1$-dimensional.
When $\mu=0$, ${\bf H}={\bf H}^0$ becomes the \emph{Free Hamiltonian} studied in \cite[Sections 4.2 \& 4.3]{GKRV}.

In order to construct the Sinh-Gordon theory for $R= 1$, we will rely on a diagonalization result for the Hamiltonian ${\bf H}$. 
This is no restriction since
we shall show by a scaling property of Gaussian multiplicative chaos that constructing the theory for radius $R>0$ and constant $\mu>0$ 
can be reduced to constructing it for $R=1$ and coupling constant $\mu R^{\gamma Q}$ with $Q:=\frac{\gamma}{2}+\frac{2}{\gamma}$. 

We summarize the diagonalization result, along with other spectral properties of the Hamiltonian, in the following theorem (as above, we will fix $s>0$ and work on $H^{-s}(\T)$).

\begin{theorem}\label{thm:spectrum_H}
Let $\mu > 0$ and $\gamma\in(0,2)$. The Hamiltonian ${\bf H}$ generating the semigroup ${\bf T}_t$ is a self-adjoint, positive operator acting on a dense domain $\mathcal{D}(\mathbf{H}) \subset \mathcal{H}$. Furthermore, the following properties hold:
\begin{enumerate}
\item The operator $\mathbf{H}$ has discrete spectrum $(\lambda_j)_{j \geq 0}\subset \R_+$  with complete basis of normalised eigenfunctions $(\psi_j)_{j \geq 0}$,  the smallest eigenvalue $\lambda_0$ is simple and strictly positive, and there is  a unique associated normalised eigenfunction $\psi_0\in \mc{H}$, which is positive $dc\otimes d\mathbb P_\T$-almost everywhere. We call $\psi_0$ the ground state.
\item For every $j \geq 0$ and $N \geq 0$, the eigenfunctions $\psi_j$ belong to the weighted $L^p$ spaces $e^{-N|c|}L^p(H^{-s}(\T),\mu_0)$.
\item  One has the following diagonalization result
\begin{equation}\label{introdiagonalize}
e^{-t{\bf H}}f(c+\varphi)= \sum_{j=0}^{\infty} e^{-\lambda_j t}\psi_j(c+\varphi)
\langle f, \psi_j \rangle_{\mc{H}}, \qquad \forall f \in \mathcal{H}.
\end{equation}
\end{enumerate}
\end{theorem}

The proof of Theorem \ref{thm:spectrum_H} is given in Section \ref{sec:Hamiltonian}. Although some of the techniques involved are reminiscent of the construction of the Hamiltonian associated to Liouville CFT in \cite{GKRV}, the treatment of the spectral properties is somehow different
from the Liouville case, as the analogue of $\mathbf H$ does \emph{not} have discrete spectrum. Ultimately, the difference boils down to the fact that the potential  in the Liouville Hamiltonian  decays when the average of the field (the zero mode) $c$ decays to $-\infty$, whereas the Sinh-Gordon potential is confining in both direction $c\to \pm \infty$.

In the sequel, we will sometimes write $\lambda_{j}^\mu$ to stress the dependence in $\mu$ of the eigenvalues when appropriate.

\subsection{Main results on the Sinh-Gordon path integral}

Our first result gives a rigorous meaning to \eqref{eq: formal def sinh}. We shall construct the path integral \eqref{eq: formal def sinh} by considering the limit as $T\to \infty$  of the path integral on the finite cylinder $\mc{C}_{R,T}:=[-T,T]\times \T_R$:
\begin{equation} \label{eq: formal def approx sinh}
\langle F \rangle_{\mc{C}_{R,T}}
=
\frac{1}{Z_{T}} \int F(\phi) e^{- \int_{\mc{C}_{R,T}}  ( \frac {1} {4 \pi} |\nabla \phi|^2 +2\mu \cosh(\gamma \phi) ) dx} D\phi,
\end{equation}
where the formal path integral on the right-hand side will be defined probabilistically. Recall that we view the probability measure $\langle \cdot \rangle_{\mathcal C_{R,T}}$ as defined on $H^{-s}(\mathcal C_{R,T})$ for some $s>0$. Motivated by the construction of analogous measures in 1d \cite{OS99, LHB} (see also Appendix \ref{appendix: path integral}), we will in fact construct the measure on a more regular subspace, $C(\mathbb R, H^{-s}(\mathbb T_R))$, which allows us to view Sinh-Gordon fields as continuous-time stochastic processes with values in the Sobolev space $H^{-s}(\mathbb T_R)$. We will also take $R=1$ and consider the general case by a scaling argument. For $F:C([-T,T],H^{-s}(\T))\to \R$ continuous and bounded and for $R=1$ we interpret \eqref{eq: formal def approx sinh} by 
\begin{equation} \label{eq: path integral finite approx}
\langle F \rangle_{\mc{C}_{1,T}}
:=
\frac{1}{Z_T} \int_{\R}\mathbb{E} \Big[ F(c+B_{T+\bullet} + \varphi_{T+\bullet}) e^{- \mu \sum_{\sigma = \pm 1} e^{\sigma\gamma c}M_\gamma^\sigma([0,2T]\times \T)}  \Big]dc,
\end{equation}
where $T+\bullet$ denotes the shifted process (this is for convenience as it allows us to start the process from $t=0$ rather than $t=-T$) and $Z_T= \int_\mathbb R \mathbb E[e^{- \mu \sum_{\sigma = \pm 1} e^{\sigma\gamma c}M_\gamma^\sigma([0,2T]\times \mathbb T)} ]dc$ denotes the total mass of the measure. 

We will define the Sinh-Gordon model on $\mathcal{C}$ to be the probability measure obtained by taking the $T\rightarrow \infty$ limit of the measures defined by \eqref{eq: path integral finite approx}. An in-depth discussion of the precise $\sigma$-algebras involved and the reason why considering these averages defines a probability measure is contained in Section \ref{subsec: state space}.

We say that $F:C(\mathbb{R}, H^{-s}(\T_R)) \rightarrow \mathbb{R}$ is $I$-measurable for some interval $I\subset \R$ if there exists 
$F^I : C(I, H^{-s}(\T_R)) \rightarrow \mathbb{R}$ such that
\begin{align*}
F(\phi) 
= 
F^I(\phi|_I), \qquad \forall 	\phi \in C(\mathbb{R}, H^{-s}(\T_R)),
\end{align*} 
and we shall identify $F$ and $F^I$.
The following theorem establishes the existence of the Sinh-Gordon model on the infinite cylinders $\mc{C}_R$ for any $R>0$ and gives explicit formulas for expectations in terms of the stochastic process $(B_\bullet,\varphi_\bullet)$. Since our construction relies on a massless GFF, one can relate the theory on $\mc{C}_R$ to the theory on $\mc{C}$ by a simple scaling. For any $F:C(\mathbb{R}, H^{-s}(\T_R))\rightarrow \mathbb{R}$, let $F_R:C(\mathbb{R}, H^{-s}(\T)) \rightarrow \mathbb{R}$ be defined by $F_R(\phi)=F(\rho_R^*\phi)$ with $\rho_R(t,\theta)=(t/R,\theta/R)$ where $\phi$ is viewed as a distribution on $\mc{C}$.

The next theorem concerns the construction of the path integral and is proved in Section \ref{subsec: construction sinh R}.

\begin{theorem} \label{thm: sinh and one pt}
Let $\mu > 0$, $\gamma \in (0,2)$, and $R>0$. There exists a probability measure $\langle \cdot \rangle_{\mc{C}_R}$ on $C(\R,H^{-s}(\T_R))$ satisfying:\\
1) If $R=1$, for every  finite interval $I=[t_1,t_2]$ of length $|I|=t_2-t_1$ and for every $F:C(\R,H^{-s}(\T)) \rightarrow \mathbb{R}$ bounded and measurable with respect to $(B_t,\varphi_t)_{t \in [t_1,t_2]}$, the limit of $\cjg F\cjd_{\mc{C}_{1,T}}$ as $T\to \infty$ exists and 
\[ \cjg F\cjd_{\mc{C}_1} =\lim_{T\to \infty}\cjg F\cjd_{\mc{C}_{1,T}}.\]
Moreover, the expectation of $F$ is given by
\begin{align*}
\langle F \rangle_{\mc{C}_1}
&= 
e^{\lambda_{0} |I| } \int_\R \E[\psi_0(c+\varphi) \psi_0(c+B_{|I|}+\varphi_{|I|}) F\big( c+B_{\bullet -t_1}+\varphi_{\bullet-t_1}\big)  e^{-\mu  \sum_{\sigma = \pm 1} e^{\sigma\gamma c}M_\gamma^\sigma([0,|I|]\times\T)}] dc.
\end{align*} 
where $\la_0,\psi_0$ are the smallest eigenvalue and ground state of ${\bf H}$ described in Theorem \ref{thm:spectrum_H}.\\
2) If $R>0$, for every  finite interval $I=[t_1,t_2]$ and $F:C(\mathbb{R}, H^{-s}(\T_R))\to \mathbb{R}$ bounded such that $\rho_R^*F:C(\mathbb{R}, H^{-s}(\T))\to \mathbb{R}$ is measurable with respect to $(B_t,\varphi_t)_{t \in I}$,  $\langle F \rangle_{\mc{C}_{R}}$ satisfies the scaling relation
\begin{align*}
\langle F \rangle_{\mc{C}_{R}}
=
e^{\frac{\lambda_0^{\mu_R}|I|}{R} } \int_\R \E[\psi_0^{\mu_R}(c+\varphi) \psi^{\mu_R}_0(c+B_{\frac{|I|}R}+\varphi_{\frac{|I|}{R}}) F \big( (B_{\frac{\bullet-t_1}{R}}, \varphi_{\frac{\bullet-t_1}{R}})\big)  e^{-\mu_R \sum_{\sigma = \pm 1} e^{\sigma\gamma c}M_\gamma^\sigma([0,\frac{|I|}R]\times \T)}]dc,
\end{align*} 
where $Q=\frac 2\gamma + \frac \gamma 2$, $\mu_R:=\mu  R^{\gamma Q}$, $\lambda_{0}^{\mu_R}$ and $\psi^{\mu_R}_0$ denote the smallest eigenvalue and associated normalised eigenfucntion associated with the Hamiltonian $\H$ of \eqref{intro_H} with the constant $\mu$ replaced by $\mu_R$.
\end{theorem}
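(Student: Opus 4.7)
The plan is to prove part (1) by rewriting the finite-volume path integral \eqref{eq: path integral finite approx} in operator form via the Markov property of the process $(B_\bullet+\varphi_\bullet)$ and the reversibility of the semigroup $\mathbf{T}_t$ with respect to $\mu_0$, and then invoking the spectral decomposition from Theorem \ref{thm: spectrum H} to extract the $T\to\infty$ asymptotics. Part (2) will follow from part (1) by a scaling argument combining conformal invariance of the GFF with the scaling covariance of Gaussian multiplicative chaos.

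For part (1), I would decompose the underlying Markov process at the intermediate times $T+t_1$ and $T+t_2$ and split the Feynman--Kac weight $W_{[0,2T]}:=e^{-\mu\sum_{\sigma}\int_{[0,2T]\times\T}e^{\sigma\gamma c}M^\sigma_\gamma(dsd\theta)}$ multiplicatively as $W_{[0,T+t_1]}\cdot W_{[T+t_1,T+t_2]}\cdot W_{[T+t_2,2T]}$. Combined with the reversibility of $\mathbf{T}_t$ with respect to $\mu_0$, this yields the operator representation
\begin{align*}
Z_{\mathcal{C}_{1,T}}\langle F\rangle_{\mathcal{C}_{1,T}}=\int d\mu_0(\phi)\,(\mathbf{T}_{T+t_1}\mathbf{1})(\phi)\,\mathbb{E}_\phi\big[F\cdot W_{[0,|I|]}\cdot(\mathbf{T}_{T-t_2}\mathbf{1})(\phi_{|I|})\big],
\end{align*}
together with $Z_{\mathcal{C}_{1,T}}=\int d\mu_0\,\mathbf{T}_{2T}\mathbf{1}$. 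The asymptotic identity
\begin{align*}
e^{T\lambda_0}\,\mathbf{T}_T\mathbf{1}\;\xrightarrow[T\to\infty]{}\;\langle\mathbf{1},\psi_0\rangle_{\mathcal{H}}\,\psi_0
\end{align*}
then, upon substitution into numerator and denominator and taking the ratio, produces the prefactor $e^{\lambda_0|I|}$ and the ground-state weighting claimed in the theorem; the coefficient $\langle\mathbf{1},\psi_0\rangle_{\mathcal{H}}$ cancels between the two. For bounded $F$, the limit can be pushed inside the expectation by dominated convergence, using the weighted $L^p$ estimates on $\psi_0$ from Theorem \ref{thm: spectrum H}(2).

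The principal obstacle is that $\mathbf{1}\notin\mathcal{H}$, since $\mu_0$ carries Lebesgue measure in the zero-mode $c$, so the eigenbasis $(\psi_j)$ cannot be used directly to expand $\mathbf{T}_T\mathbf{1}$. I would resolve this by establishing an a priori decay $(\mathbf{T}_{T_0}\mathbf{1})(c,\varphi)\leq C\,e^{-\eta|c|}$ for some fixed $T_0,\eta>0$, exploiting the fact that the Sinh-Gordon potential is confining in both directions $c\to\pm\infty$ together with the standard negative moment bound $\mathbb{E}[M^\pm_\gamma(K)^{-p}]<\infty$ for all $p>0$ on compact $K\subset\mathcal{C}$ of positive Lebesgue measure. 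This places $\mathbf{T}_{T_0}\mathbf{1}\in\mathcal{H}$, and the semigroup identity $\mathbf{T}_T\mathbf{1}=\mathbf{T}_{T-T_0}(\mathbf{T}_{T_0}\mathbf{1})$ combined with Theorem \ref{thm: spectrum H}(3) then yields the spectral expansion $\mathbf{T}_T\mathbf{1}=\sum_{j\geq 0}e^{-T\lambda_j}\langle\mathbf{1},\psi_j\rangle_{\mathcal{H}}\psi_j$ in $\mathcal{H}$, from which the asymptotics follows with an explicit exponential remainder $O(e^{-T(\lambda_1-\lambda_0)})$ thanks to the spectral gap granted by the discreteness statement of Theorem \ref{thm: spectrum H}(1).

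For part (2), I would perform the change of variables $(t,\theta)\in\mathcal{C}_R\mapsto(t/R,\theta/R)\in\mathcal{C}$, which preserves the 2d Dirichlet energy by conformal invariance, while inducing the law identity $M^{\pm,\mathcal{C}_R}_\gamma(A)\stackrel{\mathrm{law}}{=}R^{\gamma Q}M^{\pm,\mathcal{C}}_\gamma(\rho_R(A))$ (with $Q=\gamma/2+2/\gamma$) as a direct consequence of the covariance scaling of the GFF on the cylinder. The Sinh-Gordon path integral on $\mathcal{C}_{R,T}$ with coupling $\mu$ and time interval $I$ is thereby identified with the Sinh-Gordon path integral on $\mathcal{C}_{1,T/R}$ with coupling $\mu_R=\mu R^{\gamma Q}$ and time interval $I/R$; applying part (1) with these substitutions yields the stated formula. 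This scaling step is essentially routine once the covariance scaling identities for the massless GFF and its associated GMCs on the cylinder are recorded, so the main difficulty of the theorem really lies in the ground-state asymptotics of $\mathbf{T}_T\mathbf{1}$ discussed above.
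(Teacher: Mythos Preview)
Your proposal follows the same strategy as the paper: decompose the finite-volume integral via the Markov property into a bilinear form $\langle\mathbf{T}_{T+t_1}\mathbf{1},\mathcal{B}_F\,\mathbf{T}_{T-t_2}\mathbf{1}\rangle_{\mathcal{H}}$, show $\mathbf{T}_{T_0}\mathbf{1}\in\mathcal{H}$ using negative GMC moments, and extract the ground-state asymptotics from the spectral expansion of Theorem~\ref{thm: spectrum H}; the scaling reduction for part (2) is handled identically.

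One correction is needed: the pointwise bound $(\mathbf{T}_{T_0}\mathbf{1})(c,\varphi)\leq Ce^{-\eta|c|}$ uniform in $\varphi$ that you claim is actually false---for any given $c$ one can take $\varphi$ with harmonic extension $P\varphi$ very negative on $[0,T_0]\times\T$, making $M^{+}_\gamma$ arbitrarily small and the conditional expectation close to $1$. What is both true and sufficient is the weighted integrability $e^{N|c|}\mathbf{T}_{T_0}\mathbf{1}\in L^p(\mu_0)$, which follows precisely from the negative-moment bound you cite once you integrate out $\varphi$ (this is the paper's Lemma~\ref{LinftyL2}). Relatedly, the passage to the limit is cleaner via $\mathcal{H}$-convergence of $e^{T\lambda_0}\mathbf{T}_T\mathbf{1}$ combined with the operator bound $\|\mathcal{B}_F\|_{\mathcal{L}(\mathcal{H})}\leq\|F\|_\infty$ (and an $L^1(\mu_0)$ upgrade to handle the denominator $Z_{\mathcal{C}_{1,T}}=\int\mathbf{T}_{2T}\mathbf{1}\,d\mu_0$), rather than by dominated convergence, which would require pointwise a.e.\ convergence that only holds along subsequences.
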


The proof of Theorem \ref{thm: sinh and one pt} is contained in Section \ref{sec: path integral}. It is inspired by the construction of $P(\varphi)_1$ measures in 1d (see \cite{OS99, LHB}), but there are a number of essential difficulties to overcome in order to adapt it to our setting. First of all, in the 1d case, the associated Hamiltonians of the theory are classical Schr\"odinger operators whose spectral properties are well-understood. In our case, the necessary inputs of their theory are not immediate and this is the content of Theorem \ref{thm:spectrum_H}. Secondly, in 1d the natural reference measure can be related to Wiener measure and hence Brownian motion. In two dimensions, the natural reference measure is the Gaussian Free Field, which is associated to Brownian motion and infinitely many independent Ornstein-Uhlenbeck processes. In the case of quantum electrodynamics, similar constructions of an invariant measure on paths with values in $L^2(\Omega)$ for a Gaussian probability space $\Omega$ have been obtained in \cite{Hiroshima1} using the ground state approach. In this case, the Hilbert space is $\mc{H}=L^2(\R^d)\otimes L^2(\Omega)$, the Hamiltonian generating the stochastic dynamics share similarities with our Hamiltonian ${\bf H}$ where $\mc{H}\simeq L^2(\R)\otimes L^2(\Omega)$ (with $(c,(\varphi_n)_{n\in \N})\in \R \times \Omega$), except that the potential in \cite{Hiroshima1} is a function of the variable in $\R^d$ (in the Kato class), while in our case and the potential $V$ is a function of all variables $(c,(\varphi_n)_{n\in \N})$ in $\R \times \Omega$ expressed in terms of Gaussian Multiplicative Chaos, that can not be controlled by the free field Hamiltonian ${\bf H}^0$. This complicates the analysis and require probabilistic estimates on Gaussian Multiplicative Chaos measures.

We now turn to the vertex correlations. In order to state our theorem, we will need the notion of $\gamma$-admissible insertions -- these are the analogue of the first Seiberg bounds in the context of Liouville theory \cite{DKRV16}. 
Insertions are disjoint points $z_1,\dots,z_n$ on the cylinder $\mc{C}$ which come with weights  $\alpha_1,\dots,\alpha_n\in \R$ attached to them. A set $\mc{I}$ of insertions is a finite set $\mathcal{I}=\{(\alpha_1,z_1),\dots,(\alpha_n,z_n)\}\subset \R\times \mc{C}$ such that $z_i\not= z_j$ if $i\not=j$. We say that $(\alpha_1,z_1),(\alpha_2,z_2) \in \mc{I}$ are in the same slice if there exists $t \in \R$ such that $z_1,z_2 \in \{t\} \times \T$. 
Given a set $\mathcal{I}\subset \R\times \mc{C}$ of insertions and $t \in \R$, we write $\mathcal{I}_t$ to denote the set $\{ (\alpha,z) \in \mathcal{I} \, |\, \exists \theta \in \T, z = (t,\theta) \}$. We say that a set $\mc{I}$ of insertions is $\gamma$-admissible if $|\alpha|<Q$ for all $(\alpha,z) \in \mathcal{I}$ (recall that $Q=\gamma/2+2/\gamma$). 

We will also be interested in how vertex correlations in different vertical slices of the cylinder interact. Let $\mathcal{I}$ be a set of $\gamma$-admissible insertions and let $\{t_1,\dots,t_n\}\subset \R$ (with $t_i<t_j$ for $i \not = j$) denote the finite set of slices that $\mathcal{I}$ is supported on, in the sense that
$\mathcal{I}=\sqcup_{i=1}^n \mathcal{I}_{t_i}$. We write $\mathcal{I}_{t_i}= \cup_{j=1}^{l_i} \lbrace   (\alpha_{ij},t_i, \theta_{ij}) \rbrace$ and define the function $h_i(\theta)= -\sum_{j=1}^{l_i} \log |e^{i\theta}-e^{i \theta_{ij}}|$. Then for all $i$ the variable $\psi_0(\cdot+ h_i)$ is well defined as the limit of $\psi_0(\cdot+ h_{i,\eps})$ as $\eps$ goes to $0$ where $(h_{i,\eps})_{\eps >0}$ is any smooth approximation to $h_i$: see Section \ref{subsec:_vertex_correlations} for a justification of this point.

In the following theorem, whose proof is in Section \ref{subsec:_vertex_correlations}, we establish that the vertex correlations exist for any insertion set $\mathcal{I}$ and are nontrivial provided $\mathcal{I}$ is $\gamma$-admissible. We also give explicit formulas for these correlations. Finally, we show that the two-point \emph{truncated} vertex correlation function, i.e.\ the covariance of the two vertex correlations, decays exponentially with rate proportional to the mass gap in the spectrum: $\frac{1}{R}(\la_1^{\mu_R}-\la_0^{\mu_R})$. Since the scaling in $\mu$ will be important, we denote by $\cjg \cdot\cjd_{\mc{C}_R,\mu}$ the probability measure in Theorem \ref{thm: sinh and one pt} to emphasize the $\mu$ dependence.

\begin{theorem} \label{thm: sinh correlations}
Let $\mu>0$, $\gamma\in (0,2)$, $R>0$ and fix a set $\mathcal{I}=\bigsqcup_{i=1}^n\mc{I}_{t_i}$ of insertions with $\mathcal{I}_{t_i}= \cup_{j=1}^{l_i} \lbrace   (\alpha_{ij},t_i, \theta_{ij}) \rbrace$. The vertex correlations are defined via the formula
\[\Big\cjg \prod_{(\alpha,z)\in \mc{I}}V_{\alpha}(z)\Big\cjd_{\mc{C}_R,\mu} :=\lim_{\eps\to 0}
\Big\langle \prod_{(\alpha, (t, \theta)) \in \mathcal{I}}\eps^{\alpha^2/2}e^{\alpha (c+ B_{t/R} +  \varphi^{R,\eps}(t,\theta)) } \Big\rangle_{\mc{C}_R,\mu}\, ,
\]
where the limit exists for any insertion set $\mathcal I$ and is nontrivial if and only if $\mathcal{I}$ is $\gamma$-admissible. In addition, $\gamma$-admissible vertex correlations obey the following relations:
\vspace{3mm}

\noindent 1) Let $\mathcal{I}$ be $\gamma$-admissible insertions on $\mc{C}_R$ and let $\mathcal{I}_{1/R} = \{ (\alpha, z/R) \,|\, (\alpha,z) \in \mathcal{I} \}$. Then we have the following scaling relation: 
\begin{equation}\label{scale invariance}
\Big\cjg \prod_{(\alpha,z)\in \mc{I}}V_{\alpha}(z)\Big\cjd_{\mc{C}_R,\mu} 
=
R^{\sum_{(\alpha,z) \in \mathcal{I}} \frac{\alpha^2}{2} }
\Big\cjg \prod_{(\alpha,z)\in \mc{I}_{1/R}}V_{\alpha}(z)\Big\cjd_{\mc{C},\mu_R}.
\end{equation}
with $\mu_R=R^{\gamma Q}\mu$.
 In particular, the one-point function $\langle V_\alpha(0) \rangle_{\mathcal C_R}:=\langle e^{\alpha \varphi(0)} \rangle_{\mc{C}_R,\mu}$ exists, is nontrivial for $|\alpha|<Q$, and obeys the scaling relation
\begin{equation} \label{one-pt}
\langle e^{\alpha \varphi(0)} \rangle_{\mc{C}_R,\mu}
=
R^{\alpha^2/2} \langle e^{\alpha \varphi(0)} \rangle_{\mc{C},\mu_R}.	
\end{equation}
\vspace{2mm}

\noindent 2)
One has the following formula for the correlations when $R=1$; if $t_i >0$ for all $i$, for any $t>\max(t_1,\dots,t_n)$ 
\begin{align}
\begin{split}
 \Big\cjg \prod_{(\alpha,z)\in \mc{I}}V_{\alpha}(z)\Big\cjd_{\mc{C},\mu}   & =  e^{\lambda_0 t + \frac{1}{2}\sum_{i,j}\alpha_{ij}^2t_i} \int_\R e^{\sum_{i,j}\alpha_{ij} c}\mathbb{E}\big[\psi_0\big(c+B_t+\sum_{i,j} \alpha_{ij}t_i+ \varphi_t + Ph (t,\cdot))\psi_0(c+\varphi+h) 
\\
& \qquad \qquad \times e^{-\mu \sum_{\sigma = \pm 1} e^{\sigma\gamma c} \int_{[0,t]\times \T} \prod_{i,j} |e^{-s+i\theta}-e^{-t_i+i\theta_{ij}}|^{-\gamma \sigma \alpha_{ij}} M^\sigma_\gamma(dsd\theta)}\big ]dc
\end{split}
\end{align}
where $Ph(t,\theta) = \sum_{i,j} \alpha_{ij}\mathbb E[ \varphi_t(\theta) \varphi_{t_i}(\theta_{ij})]$ is a harmonic function with boundary value $h(\theta)= \sum_{i,j} \alpha_{ij}\mathbb E[ \varphi(\theta) \varphi_{t_i}(\theta_{ij})]$ at $t=0$ and decaying to $0$ as $t\to \infty$.
If $\mathcal{I}=\{(\alpha,0)\}$ with $\alpha\in (-Q,Q)$, then  \begin{equation} 
\label{vertex explicit}
\cjg V_{\alpha}(0)\cjd_{\mc{C},\mu} = \|e^{\alpha c/2} \psi_{0}(\cdot+h)\|^2_\mc{H}
\end{equation}
with $h(\theta)=-\alpha \log|e^{i\theta}-1|$.

\vspace{3mm}

\noindent 3) For every $t \in \mathbb R$, $\theta \in \mathbb T_R$, and $\alpha \in (-Q,Q)$, we have that the corresponding two-point vertex correlations\footnote{Similar statements hold for $F,G:C(\mathbb R, H^{-s}(\mathbb T_R))\to \R$ that are bounded, measurable, and of compact support. See the proof of Theorem \ref{thm: sinh correlations} in Section \ref{subsec:_vertex_correlations}.} are positively correlated:
\begin{equation}
{\rm Cov}_{\mathcal C_R}(V_\alpha(0,\theta), V_\alpha(t,\theta)):= \langle V_\alpha(0,\theta) V_\alpha(t,\theta) \rangle_{\mathcal C_R} - \langle V_\alpha(0,\theta) \rangle_{\mathcal C_R}\langle  V_\alpha(t,\theta) \rangle_{\mathcal C_R}\geq 0. 	
\end{equation}
Furthermore, we have exponential decay of the two-point vertex correlation functions: for every $\theta_1,\theta_2 \in \mathbb T_R$ and every $\alpha_1,\alpha_2 \in (-Q,Q)$, there exists $C_{R}(\alpha_1,\theta_1; \alpha_2,\theta_2)>0$ such that, for every $|t| > 1$,
\begin{equation}
|{\rm Cov}_{\mathcal C_R}(V_{\alpha_1}(0,\theta_1), V_{\alpha_2}(t,\theta_2))| \leq e^{-(\lambda_1^{\mu_R}-\lambda_0^{\mu_R})\frac{|t|}R} C_R(\alpha_1,\theta_1; \alpha_2,\theta_2).	
\end{equation}
\end{theorem}

The proof of Theorem \ref{thm: sinh correlations} is at the end of Section \ref{sec: path integral}. The difficulty lies in the fact that the vertex correlations are singular and must be defined via regularized and renormalised approximations. Thus, Theorem \ref{thm: sinh and one pt} cannot be applied in a straightforward manner. Convergence is obtained by using probabilistic techniques and Gaussian Multiplicative Chaos estimates, together with growth bounds on the eigenfunctions of $\mathbf H$ stated in Theorem \ref{thm:spectrum_H}.

\subsection{Perspectives and open problems}
\subsubsection{On the spectrum of the theory}
In the physics literature \cite{Tesch}, there exists a conjecture by Teschner giving formulas for the full spectrum $(\lambda_j)_{j \geq 0}$ of the theory. It would be very interesting to derive these formulas. 
\subsubsection{The infinite volume limit $R \to \infty$}
The infinite volume limit corresponds to taking $R \to \infty$. By the scaling relations, this corresponds to studying the limit of the rescaled Hamiltonian 
\[ \H_{R}:=R^{-1}({\bf H}^0+\mu R^{\gamma Q}(e^{\gamma c}V_+ +e^{-\gamma c}V_-))\] 
on $\mc{H}$. In particular, it is conjectured in physics that:
\begin{itemize}
\item
In our notation the lowest eigenvalue scales as $\lambda_0^{\mu_R} \underset{R \to \infty}{\sim} c R^2$ where $c>0$ is an explicit constant and $\mu_R=\mu R^{\gamma Q}$. 
See the review \cite{Mussardo21} for instance: in the review the scaling is written as $c R$ which corresponds to the Hamiltonian $\H_R$.
\item
The infinite volume one point function defined via 
\begin{equation*}
\langle V_\alpha(0) \rangle = \underset{R \to \infty}{\lim} R^{\alpha^2/2} \langle V_\alpha (0) \rangle_{\mc{C},\mu_R}\end{equation*}
 has the following explicit form for $\alpha \in (-Q,Q)$
\begin{align*}
& \langle e^{\alpha \varphi(0) } \rangle   \\
&= \left (  -\frac{\mu \pi \Gamma (1+\frac{\gamma^2}{4})}{\Gamma (-\frac{\gamma^2}{4})}  \right )^{-\frac{\alpha^2}{2 \gamma Q}}   \exp \left (  \int_0^\infty\left ( - \frac{(e^{\frac{\alpha \gamma}{2}t }  -e^{-\frac{\alpha \gamma}{2}t }  )^2}{(e^{\frac{ \gamma^2}{4}t }  -e^{-\frac{ \gamma^2}{4}t })(e^t-e^{-t})( e^{\frac{\gamma Q}{2}t }+e^{-\frac{\gamma Q}{2}t } )}   + \frac{\alpha^2}{2}e^{-2 t}  \right )     \frac{dt}{t} \right )
\end{align*}
where recall that $Q=\frac{\gamma}{2}+\frac{2}{\gamma}$. This formula was derived by Lukyanov-Zamolodchikov \cite{LukZam}.  
\end{itemize}

\subsection*{Acknowledgements}

We thank Baptiste Cercl\'e, Romain Panis, and R\'emi Rhodes for interesting discussions. TSG was supported by the Simons Foundation, Grant 898948, HDC. The research of V.V. is supported by the SNSF grant ``2d constructive field theory  with exponential interactions".

\section{Gaussian free field and Gaussian multiplicative chaos} \label{sec: GFF}

In this section we define the two main probabilistic objects that underpin our analysis: the Gaussian free field (GFF) and Gaussian multiplicative chaos (GMC).

\subsection{Path integral and the GFF on $\mc{C}_R$}\label{s:Path_Integral}
We begin this section by explaining our choice of construction of the Sinh–Gordon model using the Gaussian Free Field.
This discussion is not fully detailed, but rather serves 
as a guideline motivating the construction presented below. 

For $s>0$, let $\mc{D}'(\mc{C}_R)$ be the space of distributions on the cylinder. 
To make sense of the path integral on the infinite-volume cylinder $\mc{C}_R$
\begin{equation}\label{path_integral_sinh}
\int_{\mc{D}'(\mathcal{C}_{R})} F(\phi) e^{- \int_{\mathcal{C}_{R}}( \frac 1{4\pi}|\nabla \phi|^2+2\mu\cosh(\gamma \phi))dx} D\phi,
\end{equation}
we will first consider a finite-volume cylinder $\mathcal{C}_{R,T} = [-T,T] \times \mathbb{T}_{R}$ and then take the limit $T\to \infty$ after renormalisation by the mass $Z{\mc{C}_{R,T}}$ (which corresponds to taking $F(\phi)=1$).
For such a bounded interval, the measure will actually live on the Sobolev space $H^{-s}(\mc{C}_{R,T})$ of negative order $-s<0$ (for any fixed $s>0$).

Since $\mathcal{C}_{R,T}$ has boundary, one possible way to interpret the path integral 
\begin{equation}\label{Path_Int_CRT}
\int_{H^{-s}(\mc{C}_{R,T})} F(\phi) e^{- \int_{\mathcal{C}_{R,T}}( \frac 1{4\pi}|\nabla \phi|^2+2\mu\cosh(\gamma \phi))dx} D\phi
\end{equation}
is to fix the value of $\phi$ at $t=\pm T$ to be $(c+\varphi,c'+\varphi')\in H^{-s}(\T_R)^2$ with $c,c'\in \R$ and $\hat{\varphi}(0)= \hat{\varphi'}(0)=0$,
and consider it as a function 
of $(c+\varphi,c'+\varphi')$. This can be done rigorously as in \cite{GKRV21_Segal} by first considering the free theory (the interaction vanishes, i.e. $\mu=0$) using the field 
\[ \phi = X_{\mc{C}_{R,T},D}+P(c+\varphi,c'+\varphi')\in H^{-s}(\mc{C}_R),\] 
where $X_{\mc{C}_{R,T},D}$ is the Dirichlet Gaussian Free Field on $\mc{C}_{R,T}$ and $P(c+\varphi,c'+\varphi')$ the harmonic function in the interior of $\mc{C}_{R,T}$ with boundary values $(c+\varphi,c'+\varphi')$ (in the weak sense). Here, 
$\varphi,\varphi'\in H^{-s}_0(\T_R)$ are centred Gaussian random variables (orthogonal to constants) and with covariance kernel 
\[ \E[ \varphi(\theta)\varphi(\theta')]=
\E[ \varphi'(\theta)\varphi'(\theta')]=-\log|e^{i\frac{\theta}{R}}-e^{i\frac{\theta'}{R}}|.\] 
The random variable  $X_{\mc{C}_{R,T},D}$ has covariance kernel $2\pi G_{\mc{C}_{R,T}}(t,\theta,t',\theta')$ where $G_{\mc{C}_{R,T}}$ is the Dirichlet Green function on $\mc{C}_{R,T}$.
We can define, for each continuous  $F:H^{-s}(\mc{C}_{R,T})\to \R^+$, a function 
\begin{equation}
\mc{A}(F)(c+\varphi, c'+\varphi')
=\mc{A}^0_{\mc{C}_{R,T}}(c+\varphi, c'+\varphi')\mathbb{E}[F(\phi)],
\end{equation}
where $\mathbb{E}$ denotes expectation with respect to the law of the Dirichlet Gaussian free field $X_{\mc{C}_{R,T},D}$ and $\mc{A}^0_{\mc{C}_{R,T}}(c+\varphi, c'+\varphi')$ is an explicit kernel that generates a Markov semigroup (with respect to the cylinder height $T$), called free propagator, on the Hilbert 
space $L^2(H^{-s}(\T_R),\mu_0^R)$ with $\mu_0^R$ the law of $c+\varphi$ (this will be discussed in Lemma \ref{Int_kernel_prop}).

The amplitude $\mc{A}(F)$ can be used to give a rigorous definition of the formal path integral of the \emph{free theory with boundary conditions}, 
\begin{equation}\label{double_conditioning} 
\int_{H_{c+\varphi,c'+\varphi'}^{-s}(\mc{C}_{R,T})} F(\phi) e^{-\frac{1}{4\pi} \int_{\mathcal{C}_{R,T}}|\nabla \phi|^2dx} D\phi,
\end{equation}
where $H_{c+\varphi,c'+\varphi'}^{-s}(\mc{C}_{R,T})$ denotes\footnote{Recall that a distribution in $H^{-s}(\mc{C}_{R,T})$ cannot a priori be restricted to a time slice, but the random variables we shall use almost surely actually do admit such restriction.} the set of $\phi\in H^{-s}(\mc{C}_{R,T})$ that admit restriction to the boundary and whose boundary 
values are $\phi|_{t=-T}=c+\varphi,\phi|_{t=T}=c'+\varphi'$.
Integrating out the variable $c'+\varphi'$ with respect to $\mu^R_0$, and provided the integral is finite, it can be checked (see the Remark below Proposition \ref{prop: sinh expectation}) using the domain Markov property of the  GFF  \cite[Theorem 1.52]{Berestycki_lqggff}  that one obtains a function 
\[ W^0_T(c+\varphi):=\int_{H^{-s}(\T_R)}\mc{A}(F)(c+\varphi, c'+\varphi')d\mu^R_0(c+\varphi')\] 
that is equal to the following expression ($\E_\varphi$ denotes expectation conditional on $\varphi$ and in the sequel $\P_\varphi$ will denote the conditional probability measure) 
\begin{equation}\label{W_T} 
W^0_T(c+\varphi)=\E_\varphi[ F(\phi^R(t)|_{t\in [-T,T]})].
\end{equation}
Here, $t\mapsto \phi^R(t)$ is the random process in $C^0([-T,T],H^{-s}(\T_R))$ given by 
$\phi^R(t)=c+X_{\mc{C}^+_{R},D}(t+T,\cdot)+P\varphi(t,\cdot)$, $P\varphi$ is the harmonic function 
on the half-cylinder $[-T,\infty)\times \T_R$ with boundary value $\varphi$ at $t=-T$ and $0$ at $t=+\infty$, and 
$X_{\mc{C}^+_{R},D}$ is the Gaussian Free Field on $\R_+ \times \T_R$ with Dirichlet condition at $t=0$ (i.e. with covariance kernel $2\pi$ times the Green function of the Dirichlet Laplacian  $\Delta_{{\mc{C}^+_{R},D}}$ on the half-cylinder $\mc{C}^+_{R}$). 

This point of view generates a natural and interesting dynamic, more specifically a Markov process, and is well adapted to the addition of the $\cosh(\gamma \phi)$ potential. In this context, i.e.\ $\mu \neq 0$, the addition of the potential induces a Markov process with corresponding semigroup acting on the \emph{same} Hilbert space as the free theory,  and with generator a self-adjoint (interacting) Hamiltonian ${\bf H}$ with discrete spectrum. Using the spectral decomposition of ${\bf H}$ and the properties of its eigenfunctions, we will be able to describe the limit of $W^0_T(c+\varphi)$ as $T\to +\infty$. The construction and spectral properties of $\H$ will be explained more precisely in Section \ref{sec:Sinh_hamiltonian}. Let us also mention that another motivation for using this Markov process approach comes from the treatment of the analogous path integrals in 1d \cite{OS99} from a statistical mechanics point of view (i.e.\ motivated by the theory of Gibbs measures). 

Let us return to the question of giving a rigorous definition of the path integral without fixed boundary conditions. The function $W^0_T$ gives a rigorous definition of the path integral 
 \begin{equation}\label{simple_conditioning}
 \int_{H_{c+\varphi}^{-s}(\mc{C}_{R,T})} F(\phi) e^{-\frac{1}{4\pi} \int_{\mathcal{C}_{R,T}}|\nabla \phi|^2dx} D\phi,
 \end{equation}
where $H_{c+\varphi}^{-s}(\mc{C}_{R,T})$ is the set of $\phi \in H^{-s}(\mc{C}_{R,T})$ that admit a restriction to $t=-T$ and whose value is $\phi|_{t=-T}=c+\varphi$.
Integrating out the  $c+\varphi$ variable with respect to $\mu_0^R$ in $W^0_T(c+\varphi)$ then gives a rigorous definition of the path integral 
 \[
 \int_{H^{-s}(\mc{C}_{R,T})} F(\phi) e^{-\frac{1}{4\pi} \int_{\mathcal{C}_{R,T}}|\nabla \phi|^2dx} D\phi.\]
We will build on this Gaussian path integral to give a rigorous definition of the path integral \eqref{Path_Int_CRT} including the nonlinear potential $\cosh(\gamma \phi)$ by means of Gaussian multiplicative chaos measures. In fact, we will even consider the same path integral, but with boundary values of $\phi$ at $t=\pm T$ to be fixed, just like in \eqref{double_conditioning} or \eqref{simple_conditioning}.
As will be explained in the Remark below Proposition \ref{prop: sinh expectation},  when $T\to \infty$, the behaviour 
 with or without fixing the boundary value at $t=\pm T$  produces the same result and allows to define \eqref{path_integral_sinh}.
 
\subsection{Markov process associated to the GFF}\label{sec:markov_process_GFF} 
Let us come back to the Gaussian path integral with the GFF. For practical purposes, one can exploit the invariance in law of the field $\phi$  under a shift  by $+2T$ in the $t$-variable to reduce the analysis on the finite volume cylinder $[0,2T]\times \T_R$ instead of $[-T,T]\times \T_R$.  We are going to build the (shifted by $2T$) Markov process $\phi^R$ explicitly from random Fourier series, allowing us to write down the explicit covariance functions of the process.

Consider $\Omega_\T := (\R^2)^{\N^*}$ endowed with its natural Borel $\sigma$-algebra $\cF_\T$, and denote by $(x_n,y_n)_{n \in \N^*}$ the canonical coordinates in this space. Let $\P_\T$ denote the probability measure on $(\Omega,\cF_\T)$ defined by
\begin{equation}\label{Gaussian_measure}
\P_\T
=
\bigotimes_{n=1}^\infty \frac{1}{2\pi} e^{-\frac 12(x_n^2 + y_n^2)} dx_n dy_n.
\end{equation}
The canonical coordinates in this probability space, $(x_n)_{n \in \N^*}$ and $(y_n)_{n \in \N^*}$, $\P_\T$-a.s. correspond to two independent sequences of i.i.d. standard normal random variables. 
Consider the random variable 
\begin{equation}\label{RVvarphi}
\varphi(\theta)
=
\sum_{n \neq 0} \varphi_{n} e^{ i n \frac{\theta}{R}},  \quad\theta \in \T_R,
\quad \textrm{where } \varphi_{n} 
:= 
\frac{x_n + i y_n}{2 \sqrt n} , \qquad \varphi_{-n} = \overline{\varphi_{n}}
\end{equation}
and $x_n,y_n$ are the i.i.d. Gaussians described above. It follows that $\varphi \in H^{-s}(\T_R)$ almost surely for all $s>0$, where $H^{-s}(\T_R)$ denotes the Sobolev space of order $-s$ on $\T_R$.
By definition, the random variable $\varphi$ is the Gaussian Free Field on the circle, with covariance kernel (in the sense of distributions)
\[ \E[\varphi(\theta)\varphi(\theta')]=\log \frac{1}{|e^{i \theta/R}-e^{i \theta'/R}|}.\]
The probability measure $\P_{\T}$ induces a probability measure, still denoted $\P_\T$, on $H^{-s}(\T_R)$ by pushforward via the random variable $\varphi$. 
We denote by $H_0^{-s}(\T_R)$ the subspace of $f \in H^{-s}(\T_R)$ such that $\cjg f,1\cjd=0$. 
Note that, $\P_\T$-almost surely, $\varphi \in H_0^{-s}(\T_R)$.

The Gaussian Free Field $X_{\mc{C}^+_R}\in H^{-s}(\mc{C}_R^+)$ (for any $s>0$) on the half-cylinder $\mc{C}_R^+=\R_+ \times \T_R$ is defined as the sum of two independent Gaussian random variables taking values in $H^{-s}(\mc{C}_R^+)$: 
\[ X_{\mc{C}_R^+}=X_{\mc{C}_R^+,D}+P^R\varphi.\]
Here, $X_{\mc{C}_R^+,D}$ is the Dirichlet GFF, the Gaussian random variable with covariance kernel given by the Dirichlet Green function on $\mc{C}_R^+$
\[ \E[X_{\mc{C}_R^+,D}(t,\theta)X_{\mc{C}_R^+,D}(t',\theta')]= \log\frac{|1-e^{-\frac{t+t'-i(\theta-\theta')}{R}}|}{|e^{-\frac{t-i\theta}{R}}-e^{-\frac{t'-i\theta}{R}}|}, \]
and $P^R\varphi$ is the harmonic function on $\mc{C}_R^+$ with boundary value at $t=0$ given by the random variable $\varphi$ of \eqref{RVvarphi}, which vanishes as $t\to \infty$. It is given explicitly for $t\geq 0$, $\theta\in \T_R$ by 
\begin{equation}\label{PRvarphi}
P^R\varphi(t,\theta)=P\varphi(\frac{t}{R},\frac{\theta}{R}):=\sum_{n=1}^\infty 
\frac{x_n+iy_n}{2\sqrt{n}}e^{-n\frac{t}{R}}e^{in\frac{\theta}{R}}+\frac{x_n-iy_n}{2\sqrt{n}}e^{-n\frac{t}{R}}e^{-in\frac{\theta}{R}}.\end{equation}

We consider the state space $C(\R_+, H^{-s}(\mathbb{T}_R))\simeq C(\R_+,\R\times H^{-s}_0(\T_R))$ of continuous 
$\R\times H^{-s}_0(\T_R)$-valued functions on $\R_+$, equipped with its Borel $\sigma$-algebra. We moreover have the inclusion 
$C([0,T],H^{-s}(\T_R))\subset H^{-s}(\mc{C}_{R,T})$. 
The Markov process $\phi^R \in C(\mathbb{R}_+, H^{-s}(\mathbb{T}_R))$ will be given by the restrictions on time slices 
of the GFF on the cylinder $\mc{C}_R^+$:
\begin{equation}\label{defPhi^Rt}  
t\in \R_+\mapsto \phi^R(t):=c+X_{\mc{C}_R^+}(t,\cdot)\in H^{-s}(\T_R).
\end{equation}
Let us now describe this process more concretely using the Fourier decomposition in the $\T_R$ variable. 
First, any $L^2$ function on $\mc{C}_R^+$ can be decomposed under the form
\begin{equation}
\phi^R(t,\theta)
:=
\phi^R_0(t) + \varphi^R(t,\theta),
\end{equation}
where $\phi_0^R(t)\in \R$, $\varphi^R(t,\cdot)\in L^2(\T_R)$ with
\begin{align}
 \varphi^R(t,\theta) = \sum_{n=1}^\infty \frac{x_n^R(t)+iy_n^R(t)}{2\sqrt{n}}
e^{in \frac{\theta}{R}}+\frac{x_n^R(t)-iy_n^R(t)}{2\sqrt{n}}
e^{-in \frac{\theta}{R}}
\end{align}
for some $x_n^R(t),y_n^R(t)\in \R$. The same decomposition also applies to the random distribution $\phi^R=X_{\mc{C}_R^+}$ and a direct calculation gives that $\phi_0^R(t)$ is a standard Brownian motion $\phi_0^R(t)=B_{t/R}$ vanishing at $t=0$, with covariance $\E[B_tB_{t'}]=\min(t,t')$, while $\varphi^R(t,\theta)$ is a Gaussian random variable in $H^{-s}(\mc{C}_R^+)$ with covariance kernel 
\begin{equation}\label{covvarphiR} 
\E[ \varphi^R(t,\theta) \varphi^R(t',\theta')]= \log \frac{\max(e^{-t'/R}, e^{-t/R})}{|e^{-t'/R} e^{i \theta'/R}- e^{-t/R} e^{i \theta/R} |}.\end{equation}

Next, we describe these two processes in terms of their Fourier coefficients. 
Consider a probability space $\Omega$ with probability measure $\mathbb{P}$ that contains the random variables $(x_n, y_n)_{n \in \mathbb{N}^\ast}$ coming from the GFF on the circle (see \eqref{Gaussian_measure}), an independent Brownian motion $(B_t)_{t \geq 0}$ vanishing at $t=0$, and independent centred Gaussian processes$\{ (\tilde{x}_n(t))_{t \geq 0},(\tilde{y}_n(t))_{t \geq 0} \,|\,  n \in \mathbb{N}\}$ such that $\tilde{x}_n, \tilde{y}_m$ are independent for all $n,m$, and 
\[
\E[\tilde{x}_n (t) \tilde{x}_n (s) ]= \E[\tilde{y}_n (t) \tilde{y}_n (s) ]=  e^{- n  |t-s|}-e^{-n (t+s)}, \qquad \forall t,s \geq 0. 
\]
For $R>0$, let $t \mapsto Y_t^R(\theta)$ be the centred Gaussian process defined by the random Fourier series
\begin{equation}\label{DefY_t}
Y_t^R(\theta):=\sum_{n=1}^\infty\frac{\tilde{x}^R_n(t)+i\tilde{y}^R_n(t)}{2\sqrt{n}}e^{in\frac{\theta}{R}}+\frac{\tilde{x}^R_n(t)-i\tilde{y}^R_n(t)}{2\sqrt{n}}e^{-in\frac{\theta}{R}}
\end{equation} 
where $\tilde{x}_n^R(t):=\tilde{x}_n(t/R)$ and $\tilde{y}_n^R(t):=\tilde{y}_n(t/R)$.
A direct computation gives that $Y_t^R\in H^{-s}_0(\T_R)$ has covariance  
\begin{equation}\label{cov_YR}
\E [ Y_t^R(\theta) Y_{t'}^R(\theta') ] = \log \frac{|1- e^{-(t+t')/R} e^{i (\theta-\theta')/R}|}{|e^{-t'/R} e^{i \theta'/R}- e^{-t/R} e^{i \theta/R} |}- \frac{\min(t,t')}R.
\end{equation} 
We also notice that, in law, the following equality holds
\[X_{\mc{C}_R^+,D}(t,\theta)=B_{t/R}+Y_t^R(\theta).\] 
\begin{definition} \label{def: Markov process}
Let $R>0$. For $\varphi \in H^{-s}_0(\mathbb{T}_R)$ the random variable \eqref{RVvarphi}, let $\mathbb{P}^R_\varphi$ denote the law of the process\footnote{We do not distinguish the law of $(B_\bullet^R, \varphi_\bullet^R)$ and $B_\bullet^R+\varphi_\bullet^R$.} $B^R_\bullet + \varphi^R_\bullet \simeq (B^R_\bullet, \varphi^R_\bullet)$ conditioned to start at $(0,\varphi)$, where for all $t \geq 0$, $B^R_t := B_{t/R}$ and
\begin{align}
 & \varphi^R_t(\theta)=\sum_{n=1}^\infty \frac{x_n^R(t)+iy_n^R(t)}{2\sqrt{n}}
e^{in \frac{\theta}{R}}+\frac{x_n^R(t)-iy_n^R(t)}{2\sqrt{n}}
e^{-in \frac{\theta}{R}},\\
& x^R_n(t):=x_n e^{-n \frac{t}{R}}+ \tilde{x}^R_n(t), \qquad y^R_n(t):=y_n e^{-n \frac{t}{R}}+ \tilde{y}^R_n(t).
\end{align}
By \eqref{PRvarphi},   
\begin{align}\label{varphit_def}
\varphi^R_t(\theta)
=
P^R\varphi(t,\theta) + Y_t^R(\theta), \qquad \forall t \geq 0.
\end{align}
When we wish to describe the process started from $(c,\varphi) \in \mathbb{R}\times H^{-s}_0(\mathbb{T}_R)$, we will write $c+B^R_\bullet + \varphi^R_\bullet$. 
\end{definition}
The covariance of $P^R\varphi$ is given by 
\begin{align*}
& \E[P^R(\varphi)(t,\theta)P^R(\varphi)(t',\theta')]= -\log |1- e^{-(t+t')/R} e^{i (\theta-\theta')/R}|.
\end{align*}
Combining this with \eqref{cov_YR}, we obtain the following equality in law 
\[ X_{\mc{C}_R^+}(t,\theta)=B^R_{t}+ \varphi^R_t(\theta),\]
so that Definition \ref{DefY_t} matches with \eqref{defPhi^Rt} if we set $\phi^R(t)=c+B^R_{t}+ \varphi^R_t(\theta)$.
We also observe that for $(c,\varphi) \in \mathbb{R}\times H^{-s}_0(\mathbb{T}_R)$, the following scaling relation holds in law:
\begin{equation}\label{scalinrel} 
\phi^R(t, \theta)= \phi^1(\tfrac{t}{R}, \tfrac{\theta}{R}).
\end{equation}

We remark that $x_n^R(t),y_n^R(t)$ are independent Ornstein-Uhlenbeck processes with generator
\begin{equation*}
\mathcal{L}_n^R f(x^R_n)
=
\frac{n}{R} x^R_n f'(x^R_n)-\frac{n}{R} f''(x^R_n)= \frac{1}{R} ( n x^R_n f'(x^R_n)- n f''(x^R_n) ).
\end{equation*} 
This is discussed in further detail, along with the heuristics for the choice of the Markov process $\phi^R(t)$, in Appendix \ref{appendix: path integral}.

The generator of the Markov process $c+B_\bullet + \varphi_\bullet$ is given by the Free Hamiltonian  $\H^0$ that we will study in  Section \ref{sec:Hamiltonian}.

\subsection{Gaussian multiplicative chaos on $\mc{C}_R$}

We begin by defining approximations of the Gaussian multiplicative chaos (GMC) measures.
Let $R>0$ and $\gamma \in (0,2)$. We define the circle average
\begin{equation}\label{circle_average}
\varphi^{R,\eps}(t, \theta):= \frac{1}{2 \pi} \int_0^{2\pi} \varphi^{R}(t+\eps \cos(v),\theta+\eps \sin(v))  dv.
\end{equation}
Define $M_{R,\gamma,\eps}^{\circ,+}$ and $M_{R,\gamma,\eps}^{\circ, -}$ to be the random measures on $\mc{C}_R$ with density given by 
\begin{align*}
M^{\circ,+}_{R,\gamma,\eps}(dtd\theta)
&:=
 \eps^{\frac{\gamma^2}{2}}e^{\gamma (B_t^R+\varphi^{R,\eps} (t,\theta))} dtd\theta,
\\
M^{\circ, -}_{R,\gamma,\eps}(dtd\theta)
&:= \eps^{\frac{\gamma^2}{2}}e^{-\gamma (B_t^R+\varphi^{R,\eps}(t,\theta))} dtd\theta.
\end{align*}
Note that, since these measures are derived from the stochastic process $B_\bullet^R + \varphi_\bullet^R$, they depend on the choice of the initial condition  $\varphi \in H^{-s}_0(\mathbb{T}_R)$. 

The (regularised) GMC measures defined above satisfy an exact scaling relation. Indeed, the scaling relation \eqref{scalinrel} implies 
$\varphi^{R,\eps}(t, \theta)=\varphi^{1,\frac{\eps}{R}}(\frac{t}{R}, \frac{\theta}{R})$. Hence, we get for any measurable set $A$ that 
\begin{equation}\label{scalingGMC}
M^{\circ,+}_{R,\gamma,\eps}(A)= R^{\gamma Q} M^{\circ,+}_{1,\gamma, \frac{\eps}{R}}(R^{-1}A), 
\quad
 M^{\circ,-}_{R,\gamma,\eps}(A)= R^{\gamma Q} M^{\circ,-}_{1,\gamma, \frac{\eps}{R}}(R^{-1}A).
\end{equation}
All equalities hold in law. We will use this relation to deduce the general $R$ case from the $R=1$ case. For $R=1$, we shall 
omit the $R$ in the notations for the GMC measures following our general convention.

Before stating the proposition concerning the construction of the GMC measures, we introduce a Fourier regularised version of the GMC that is useful for the spectral theory. The limit will be the same as the limit constructed from the circle average cutoff. If 
$\varphi^R_t(\theta)=\sum_{n\in \Z}\varphi^R_n(t)e^{in\frac{\theta}{R}}$ we 
 let $\hat{\varphi}^{R,N}_t:=\sum_{|n|\leq N}\varphi^R_n(t)e^{in\frac{\theta}{R}}$ for $N\in \N$. 
 Define $M^{\wedge,+}_{R,\gamma,N}$ and $M^{\wedge,-}_{R,\gamma,N}$ to be the random measures on $\mc{C}_R$ with densities given by 
 \begin{align*}
M^{\wedge,+}_{R,\gamma,N}(dtd\theta)
&=
e^{\gamma (B_t^R + \hat{\varphi}_t^{R,N})-\frac{\gamma^2}{2}\E[ (\hat{\varphi}_t^{R,N})^2]} dtd\theta, 
\\
M^{\wedge,-}_{R,\gamma,N}(dtd\theta)
&=
  e^{-\gamma(B_t^R+\hat{\varphi}_t^{R,N})-\frac{\gamma^2}{2}\E[ (\hat{\varphi}_t^{R,N})^2]} dtd\theta.
\end{align*}
We observe that, as $\eps\to 0$ and $N\to \infty$,
\[ \E[ (\hat{\varphi}_t^{1,N})^2]=\log(N)+c_0+o(1), \quad  \E[ (\varphi_t^{1,\eps})^2]=\log(\eps^{-1})+o(1)\]
where $c_0$ is the Euler constant.
The existence of nontrivial limiting measures $\lim_{\eps\to 0}M^{\circ,\pm}_{R,\gamma,\eps}$ and $\lim_{N\to \infty}M^{\wedge,\pm}_{R,\gamma,N}$ is standard (see \cite{Kahane85} and the more recent reviews \cite{rhodes2014_gmcReview,Berestycki_lqggff}, as well as references therein) and using the scaling relation \eqref{scalingGMC}, one gets:
\begin{proposition} \label{prop: GMC construction}
Let $\gamma \in (0,2)$, $R>0$ and $ (c,\varphi) \in  \R \times H^{-s}_0(\T_R)$ for $s>0$. Then there exist random measures $M^+_{R,\gamma}$ and $M^-_{R,\gamma}$ such that the following convergence statements hold in probability under $\P_{\varphi} $ in the topology of vague convergence on the space of Radon measures on $(0,\infty) \times [0,2\pi]$.
\begin{itemize}
\item The  measures $M^{\wedge,\pm}_{R,\gamma,N}$ and $M^{\circ,\pm}_{R,\gamma,\eps}$ converge to $M^\pm_{R,\gamma}$ as $\eps\to 0$ and $N\to \infty$.
\item One has the scaling relation for each Borel set $A\subset \mc{C}_R$
\[ M^{+}_{R,\gamma}(A)= R^{\gamma Q} M^{+}_{1,\gamma}(R^{-1}A), 
\quad
 M^{-}_{R,\gamma}(A)= R^{\gamma Q} M^{-}_{1,\gamma}(R^{-1}A).\]
\end{itemize} 
\end{proposition}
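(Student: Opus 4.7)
The plan is to reduce to $R=1$ using the scaling relations and then identify $B_t + \varphi_t$ as a log-correlated field on the half-cylinder for which both regularizations fall within standard subcritical GMC theory.

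First I would establish the scaling relation at the level of the regularized measures. By \eqref{scalinrel} combined with the definition \eqref{circle_average} of the circle average, one has in law $\varphi^{R,\eps}(t,\theta) = \varphi^{1,\eps/R}(t/R,\theta/R)$, and $B^R_t = B_{t/R}$. Substituting into the density of $M^{\circ,+}_{R,\gamma,\eps}$, changing variables $(u,v) = (t/R,\theta/R)$, and writing $\eps^{\gamma^2/2} = R^{\gamma^2/2}(\eps/R)^{\gamma^2/2}$ gives, using $\gamma Q = 2 + \gamma^2/2$, the identity $M^{\circ,+}_{R,\gamma,\eps}(A) = R^{\gamma Q} M^{\circ,+}_{1,\gamma,\eps/R}(R^{-1}A)$; the minus version and the Fourier cutoff version are analogous (the Fourier cutoff on $\T_R$ at level $N$ rescales to the Fourier cutoff on $\T$ at level $N$). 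Once convergence in probability is established at $R=1$, passing to the limit in these identities delivers both the existence of $M^\pm_{R,\gamma}$ and the stated scaling property.

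Next I would identify the Gaussian field to which standard GMC theory applies. Adding the covariance $\min(t,t')$ of $B_t$ to the covariance \eqref{covvarphiR} of $\varphi_t$ at $R=1$ and using $\min(t,t') = -\log\max(e^{-t},e^{-t'})$, the field $X(t,\theta) := B_t + \varphi_t(\theta)$ on $(0,\infty)\times\T$ has covariance
\begin{equation*}
\mathbb{E}[X(t,\theta)X(t',\theta')] = -\log|e^{-t}e^{i\theta} - e^{-t'}e^{i\theta'}|,
\end{equation*}
which under $z = e^{-t + i\theta}$ is conjugate to the Green function of the Dirichlet Laplacian on the unit disk. Thus $X$ is a log-correlated Gaussian field in the classical sense, and Kahane's theory \cite{Kahane85} (see also \cite{rhodes2014_gmcReview,Berestycki_lqggff}) produces a nontrivial subcritical GMC $M^+_\gamma$ for $\gamma \in (0,2)$. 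On each compact box $[a,b]\times\T$ with $a>0$, the covariance of $B_t + \varphi^\eps(t,\theta)$ has logarithmic diagonal divergence $\log(1/\eps) + O(1)$ with matching off-diagonal convergence, so the construction of Berestycki yields convergence in probability in the vague topology; the extension to $(0,\infty)\times\T$ follows by exhausting with such boxes. The minus measure is handled by replacing $X$ with $-X$.

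The main obstacle is showing that the Fourier cutoff $M^{\wedge,\pm}_{\gamma,N}$ produces the same limit as $M^{\circ,\pm}_{\gamma,\eps}$, since neither converges in $L^2$ over the entire range $\gamma \in (0,2)$. I would couple both approximations on the same probability space (both are linear functionals of the same Gaussian $\varphi$ and the same Brownian motion $B$) and invoke Shamov's uniqueness theorem for subcritical GMC, which characterizes the limit purely in terms of the underlying field $X$ provided the cutoffs have (i) logarithmic diagonal divergence of the correct rate, (ii) covariances converging to that of $X$ off the diagonal, and (iii) are suitably Gaussian. Both $\varphi^\eps$ and $\hat\varphi^N$ satisfy these hypotheses by direct inspection of their explicit covariances (with diagonal growth $\log(1/\eps)+o(1)$ and $\log N + c_0 + o(1)$ respectively), so the two limits coincide. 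For $\gamma < \sqrt{2}$ one could alternatively argue via a direct $L^2$ computation using the explicit Fourier series; the case $\gamma \in [\sqrt{2},2)$ is precisely where the coupling-plus-Shamov (or a Kahane convexity) argument is essential.
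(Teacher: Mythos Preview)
Your approach is essentially the same as the paper's: reduce to $R=1$ via the scaling identity \eqref{scalingGMC} at the regularized level, and then defer to standard subcritical GMC theory (Kahane, Berestycki, Rhodes--Vargas) for the existence and coincidence of the limits. The paper's own argument is in fact even terser --- it simply declares the existence of the limits ``standard'' with the same citations and passes the scaling relation to the limit --- so your extra remarks on Shamov's uniqueness criterion for handling the $\gamma\in[\sqrt{2},2)$ range go beyond what the paper spells out.

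One small imprecision worth flagging: the covariance you compute for $X(t,\theta)=B_t+\varphi_t(\theta)$, namely $-\log|e^{-t+i\theta}-e^{-t'+i\theta'}|$, is the covariance under the full law $\mathbb{P}$ (averaging also over $\varphi$), whereas the proposition asserts convergence under $\mathbb{P}_\varphi$. Under $\mathbb{P}_\varphi$ the harmonic extension $P\varphi$ is deterministic and the random part $B_t+Y_t$ has the Dirichlet Green's function covariance from \eqref{cov_YR}, which is the one conjugate to the Dirichlet GFF on $\mathbb{D}$. This does not affect your argument: the Dirichlet covariance has the same logarithmic diagonal blow-up, and the deterministic shift $P\varphi$ (smooth on $\{t\geq a>0\}$) only multiplies the GMC by a locally bounded density $e^{\pm\gamma P\varphi}$, so the convergence in probability and coincidence of the two regularizations go through unchanged.
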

The measures $M_{R,\gamma}^+$ and $M_{R,\gamma}^-$ constructed above are called GMC measures, and when $R=1$ we will simply write $M_\gamma^\pm=M^\pm_{1,\gamma}$. Under $\P_{\varphi} $ the total mass of the measure $M_{\gamma}^\pm((0,T)\times [0,2\pi])$, or any bounded set that is arbitrarily close to the boundary $\{0\}\times[0,2\pi]$, can be infinite for any $T>0$. As we shall see below from the moment bounds of Lemma \ref{lem: GMC positive moments}, we obtain that, almost surely in the variable $\varphi$, the mass is finite (i.e. under the unconditional probability measure $\P$ with expectation $\E$).

We record below several useful moment bounds for GMC that will be used in the sequel. The first lemma concerns positive moments and its proof can be found in \cite[Prop. 3.3]{Robert_Vargas}.
\begin{lemma}\label{lem: GMC positive moments}
Let $\gamma \in (0,2)$. For any $A \subset [0,\infty)\times [0,2\pi]$ compact Borel set,
\begin{equation*}
\mathbb{E}[M^\pm_{\gamma}(A)^p] < \infty,
\qquad
\forall p \in (0, 4/\gamma^2).  
\end{equation*}
\end{lemma}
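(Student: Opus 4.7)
The statement is a standard moment bound for two-dimensional log-correlated Gaussian multiplicative chaos, which I would reduce to the classical theory as follows. First, the scaling relation of Proposition \ref{prop: GMC construction} gives $\mathbb{E}[M^\pm_{R,\gamma}(A)^p]=R^{p\gamma Q}\mathbb{E}[M^\pm_{1,\gamma}(R^{-1}A)^p]$, so it suffices to treat $R=1$. Moreover, by the symmetry in law $B_t+\varphi_t\stackrel{d}{=}-(B_t+\varphi_t)$ (under the unconditioned probability $\mathbb{P}$ with $\varphi$ drawn from $\mathbb{P}_\T$ and $B$ starting at $0$) one has $M^+_\gamma\stackrel{d}{=}M^-_\gamma$, so it is enough to bound positive moments of $M^+_\gamma$ on a compact Borel set $A\subset [0,\infty)\times[0,2\pi]$.

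The decomposition of Section \ref{sec:markov_process_GFF} identifies $X(t,\theta):=B_t+\varphi_t$ with the Dirichlet GFF on the half-cylinder $\mc{C}^+$. Combining \eqref{covvarphiR} with $\mathbb{E}[B_tB_{t'}]=\min(t,t')$, the covariance of $X$ on the compact set $A$ is of the form $-\log|z-z'|+g(z,z')$ with $g$ continuous and bounded. The field therefore falls within the standard class of log-correlated Gaussian fields in dimension two, for which Kahane's convexity inequality allows one to compare moments of the associated GMC from above by those of an exactly scale-invariant $\star$-scale log-correlated field (for which the classical multiplicative-cascade analysis of Kahane applies).

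For integer $n\geq 1$, an application of Fubini to the circle-average regularisation $M^{\circ,+}_{\gamma,\eps}$ yields
\[
\mathbb{E}[M^{\circ,+}_{\gamma,\eps}(A)^n]=\int_{A^n}\prod_{i<j}e^{\gamma^2 C_\eps(z_i,z_j)}\,dz_1\cdots dz_n,
\]
where $C_\eps$ denotes the regularised covariance kernel. As $\eps\to 0$ this converges to $\int_{A^n}\prod_{i<j}|z_i-z_j|^{-\gamma^2}e^{\gamma^2 g(z_i,z_j)}\,d^nz$, which by two-dimensional power counting near the diagonals (an $n$-point cluster of size $\rho$ contributes $\rho^{2(n-1)-\gamma^2 n(n-1)/2}$) is finite iff $n<4/\gamma^2$. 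Fatou's lemma transfers this bound to the limiting measure $M^+_\gamma$. For non-integer $p\in(0,4/\gamma^2)$, Lyapunov's inequality (i.e. $\mathbb{E}[Y^p]\leq(\mathbb{E}[Y^n])^{p/n}$ for $Y\geq 0$ and $n\geq p$) combined with Jensen for $p\leq 1$ handles all $p$ strictly below the smallest integer $\geq 4/\gamma^2$; the remaining subinterval $(\lfloor 4/\gamma^2\rfloor,4/\gamma^2)$ is covered by invoking Kahane's inequality against the exactly scale-invariant comparison field, for which the sharp threshold $p<4/\gamma^2$ is the classical Kahane--Peyrière result.

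The main technical point is the verification, on a compact $A$ possibly touching the boundary $t=0$, that the local log singularity of $X$ is exactly $-\log|z-z'|$ with a bounded correction uniformly over $A$; this is immediate from the explicit covariance formula \eqref{covvarphiR} and the continuity of the non-singular remainder. Once this is established, the rest is the standard Robert--Vargas argument \cite{Robert_Vargas}, to which we refer for the complete proof.
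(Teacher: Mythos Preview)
Your sketch is correct and aligns with the paper's treatment: the paper does not give an independent proof of this lemma but simply cites \cite[Prop.~3.3]{Robert_Vargas}, which is precisely the argument you outline (log-correlated structure, integer moments via the $n$-fold integral, then Kahane convexity for the fractional range up to $4/\gamma^2$). Your verification that the covariance of $B_t+\varphi_t$ has the form $-\log|z-z'|+g(z,z')$ with $g$ bounded on compact sets, including those touching $\{t=0\}$, is the one ingredient needed to place the present field into the Robert--Vargas framework, and it is straightforward from the explicit formula as you say.
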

The next lemma concerns negative moments; see \cite[Prop. 3.6]{Robert_Vargas}.
\begin{lemma}\label{lem: GMC negative moments}
Let $\gamma \in (0,2)$. For any $A \subset (0,\infty)\times [0,2\pi]$ bounded (nonempty) open set,
\begin{align*}
\E[M^\pm_{\gamma}(A)^{-p}]
<
\infty, \qquad \forall p \in (0,\infty).    
\end{align*}
\end{lemma}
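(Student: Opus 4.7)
The plan is to reduce the finiteness of negative moments to the analogous statement for a standard log-correlated Gaussian multiplicative chaos on a bounded planar domain, for which finiteness of negative moments of all orders is \cite[Prop.~3.6]{Robert_Vargas}. Since $A$ is bounded and open in $(0,\infty)\times[0,2\pi]$, I first pick a closed rectangle $\bar B=[t_0,t_1]\times[\theta_0,\theta_1]\subset A$ with $t_0>0$; by monotonicity of GMC one has $M^\pm_\gamma(A)\geq M^\pm_\gamma(B)$, so it suffices to prove $\E[M^\pm_\gamma(B)^{-p}]<\infty$ for every $p>0$.

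Using the decomposition in Definition~\ref{def: Markov process}, I write
\[ B_t+\varphi_t(\theta)=S(t,\theta)+Y_t(\theta),\qquad S(t,\theta):=B_t+P\varphi(t,\theta), \]
with $S$ and $Y_t$ independent centred Gaussian fields. The field $S$ has continuous covariance on $\bar B\times\bar B$ (as $P\varphi$ is harmonic away from $t=0$ and $B_t$ is continuous), hence is a.s.\ continuous on $\bar B$, and $M_S:=\sup_{\bar B}|S|$ has Gaussian tails by Fernique's theorem. On $\bar B\times\bar B$, the covariance \eqref{cov_YR} of $Y_t$ reads $-\log|(t,\theta)-(t',\theta')|+g((t,\theta),(t',\theta'))$ for a continuous bounded function $g$, so $\pm Y_t$ is a standard log-correlated Gaussian field on the planar open set $B$.

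At the level of the circle-average regularisations, the independence of $S$ and $Y_t$ together with a direct computation of the Wick renormalisations yield
\[ M^{\circ,\pm}_{\gamma,\eps}(dt d\theta)=e^{\pm\gamma S^\eps(t,\theta)+f^\eps(t,\theta)}\,\tilde M^{\circ,\pm}_{\gamma,\eps}(dt d\theta), \]
where $\tilde M^{\circ,\pm}_{\gamma,\eps}$ is the circle-average regularisation of the GMC of $\pm Y_t$ (Wick-normalised against $\E[(Y^\eps_t)^2]$) and $f^\eps$ is a deterministic function converging uniformly on $\bar B$ to a bounded limit. Passing to the limit gives a pointwise lower bound
\[ M^\pm_\gamma(B)\geq c\, e^{-\gamma M_S}\,\tilde M^\pm_\gamma(B) \]
for some deterministic constant $c>0$, and by independence of $S$ and $\tilde M^\pm_\gamma$,
\[ \E[M^\pm_\gamma(B)^{-p}]\leq c^{-p}\,\E[e^{p\gamma M_S}]\,\E[\tilde M^\pm_\gamma(B)^{-p}] \]
for every $p>0$. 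The first expectation is finite by Fernique's theorem. For the second, Kahane's convexity inequality \cite{Kahane85} allows me to compare $\tilde M^\pm_\gamma$ with the GMC of a reference log-correlated field of covariance $-\log|\cdot-\cdot'|+C$ on the bounded planar open set $B$ (absorbing the continuous function $g$ by shifting by an auxiliary independent centred Gaussian), to which \cite[Prop.~3.6]{Robert_Vargas} applies and yields finiteness of negative moments of all orders.

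The main technical obstacle is to justify rigorously the product decomposition of $M^\pm_\gamma$ into the smooth factor $e^{\pm\gamma S}$ (times a bounded deterministic function) and the log-correlated GMC $\tilde M^\pm_\gamma$: this requires joint convergence of the regularised Wick exponentials along with the underlying GMC, which follows from the independence of $S$ and $Y_t$ and from the fact that $\E[(S^\eps)^2]$ stays uniformly bounded on $\bar B$ as $\eps\to0$. Beyond this bookkeeping, the Kahane comparison step is standard once $g$ is absorbed into an independent Gaussian shift.
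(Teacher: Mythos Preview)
Your proposal is correct, and in fact it supplies more detail than the paper itself. The paper does not give a proof of this lemma: it simply states that ``its proof can be found in \cite[Prop.~3.6]{Robert\_Vargas}'' and moves on. Your argument is precisely the reduction one would write out to justify that citation in this particular setting --- isolating a rectangle away from $t=0$, splitting the field into a smooth Gaussian part (the Brownian motion plus harmonic extension) and a log-correlated part (the Dirichlet contribution $Y_t$), controlling the smooth part by Fernique, and then invoking Kahane comparison and \cite[Prop.~3.6]{Robert\_Vargas} for the genuinely log-correlated piece. So there is no real divergence of method: you have unpacked what the paper leaves implicit in the citation.
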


\subsection{Connection with the GFF and GMC on $\mathbb{C}$}

The GFF is invariant in law under conformal maps \cite[Theorem 1.57]{Berestycki_lqggff}. Therefore, using the conformal maps $\ell_R:\mathcal{C}_{R}^+ \rightarrow \mathbb{D}\setminus\{0\}$, where $\ell_R(t,\theta) = e^{-t/R+i\theta/R}$ and $\mathcal{C}_{R}^+ = \R_+\times\mathbb{T}_R$, we may relate the GFF and the GMC on $\mathcal{C}_{R}^+$ to the corresponding objects on the unit punctured disk $\mathbb{D}\setminus \{0\} = \{ z \in \mathbb C : 0 < |z| < 1\}$ or the Riemann sphere. This is convenient since it allows us to place our objects within the framework of existing works on Liouville conformal field theory \cite{DKRV16, GKRV21_Segal, GKRV} and will allow us to appeal to arguments and techniques in those papers in a more direct way. We will fix $R=1$ as we will mainly work in this setting.

Let us first explain the conformal invariance of the GFF. Let  $X_{\D}$ denote the Gaussian Free Field on the unit disk $\D=\{z\in \C\,|\, |z|\leq 1 \}$ with Dirichlet condition boundary conditions, i.e.\ with covariance 
\[ \E[ X_{\D}(x)X_{\D}(x')]=\log\frac{|1-x\bar{x}'|}{|x-x'|}.\]
Then the random variable $\phi^1(t,\theta)$ defined in \eqref{defPhi^Rt} with $R=1$ is related to $X_{\D}$ by the following equality in law which holds under the conditional probability measure $\mathbb{P}_\varphi$ for every $\varphi \in H^{-s}_0(\mathbb{T})$ and every $c \in \mathbb{R}$:
\begin{equation}\label{linkGFFdisk}
\phi^1(t,\theta)=c+X_\D(e^{-t+i\theta})+P_\D\varphi(e^{-t+i\theta})
\end{equation}
where $P_\D \varphi$ is the harmonic extension of $\varphi$ on the unit disk $\D$.
In particular, the equality in law holds also under the stationary measure $\mu_0$.

Finally, let us make the connection with GMC on the complex plane $\C$. 
The circle average \eqref{circle_average} with $R=1$ becomes, after addition of the Brownian $B_t$, at first order as $\eps \to 0$ a regularization on the plane GFF restricted to $\D$, $X=P_\D\varphi+X_\D$,  by 
\[ X_\eps=\frac{1}{2\pi} \int_{0}^{2\pi} X(e^{-t+i\theta}+\eps e^{-t+i\theta+iv})dv\]
and it is a regularization on a geodesic circle of radius $\eps$ for the metric $|dz|^2/|z|^2=(\ell_1)_*(dt^2+d\theta^2)$. 
As explained in \cite[Proposition 3.4]{GRV} or \cite[Theorem 2.8]{Berestycki_lqggff}, if $M_\gamma^{\pm,\C}$ denotes the classical GMC regularised using the GFF averaged over Euclidean circles  $X^\circ_\eps$ (i.e. using the background metric $|dz|^2$), i.e.\ the random measure obtained from the limit
\begin{align*}
M_{\gamma}^{\pm,\C}(dz)
=   
\lim_{\eps \rightarrow 0}\eps^{\frac{\gamma^2}2}e^{\pm \gamma X^\circ _\eps}dz
\end{align*}
 we have in $\D \setminus \{0\}$
\begin{equation}\label{linkGMC} 
(\ell_1)_*M^{\pm}_{\gamma}(dtd\theta)=|z|^{-\gamma Q}M^{\pm,\C}_\gamma(dz).
\end{equation}
Sometimes, if we want to make the dependence on the underlying field explicit in the GMC, we write $M_\gamma^{\sigma,\C}(X,dz)$.

\section{The Sinh-Gordon Hamiltonian on the infinite cylinder}\label{sec:Hamiltonian}

In this section, we prove Theorem \ref{thm:spectrum_H}. In particular, we introduce and study the Sinh-Gordon Hamiltonian on the cylinder $\mc{C}_1$, viewing it as the generator of a Markovian dynamics, in a way similar to the Liouville Hamiltonian studied in \cite{GKRV}. As stated before, the restriction $R=1$ (which we will shall assume throughout the section unless stated otherwise) will be removed by appealing to exact scaling relations derived from \eqref{scalinrel}. Unless stated otherwise, we fix the parameters $\mu>0$ and $\gamma\in(0,2)$ appearing in \eqref{path_integral_sinh} and drop them from the notation.

\subsection{Hilbert space and Free Hamiltonian}

We first recall the construction of the Hilbert space and definition and basic properties of the Free Hamiltonian. Recall that $\P_\T$
 is the probability measure on the sequence space $\Omega_\T := (\R^2)^{\N^*}\simeq H_0^{-s}(\mathbb{T})$ defined by
\begin{equation}
\P_\T
=
\bigotimes_{n \in \N^*} \frac{1}{2\pi} e^{-\frac 12(x_n^2 + y_n^2)} dx_n dy_n,
\end{equation}
where the identification is defined by the mapping of an i.i.d. Gaussian sequence $(x_n,y_n)$ to an element $\varphi \in H^{-s}_0(\mathbb{T})$ defined by its Fourier series
\begin{equation}\label{def_varphi}
\varphi(\theta)
=
\sum_{n \neq 0} \varphi_{n} e^{ i n \theta},  \qquad
\textrm{where } \varphi_{n} 
:= 
\frac{x_n + i y_n}{2 \sqrt n} , \qquad \varphi_{-n} = \overline{\varphi_{n}}.
\end{equation}

The Hilbert space of the theory is 
\[ \mc{H}:=L^2(\R\times \Omega_\T,dc\otimes d\P_\T)\simeq L^2(H^{-s}(\T),\mu_0)\]
 where $dc$ denotes the standard Lebesgue measure on $\mathbb{R}$, and
  $\mu_0$ is the push-forward of $dc\otimes d\P_\T$ by the random variable $c+\varphi$ with $\varphi\in H_0^{-s}(\T)$. Scalar products on this Hilbert space will be denoted $\langle \cdot, \cdot \rangle_{\mc{H}}$. 

We now rigorously define the Free Hamiltonian following \cite[Section 4]{GKRV}. Let $\mathcal{S} \subset L^2(\Omega_\T)$ denote the linear span of functions of the form $F = F(x_1,y_1, \dots, x_N,y_N) \in C^\infty((\R^2)^N)$ for some $N \in \N$ and such that all its partial derivatives have at most polynomial growth at infinity. Let $\bP$ denote the operator acting on $\mathcal{S}$ defined by
\begin{equation}
\bP
=
\sum_{n=1}^\infty n(\X_n^* \X_n + \Y_n^* \Y_n),	
\end{equation}
where, for $n \in \N$,  
\begin{equation}
\X_n = \partial_{x_n}, \qquad \X_n^* = - \partial_{x_n}+x_n, \qquad 
\Y_n = \partial_{y_n}, \qquad \Y_n^* = - \partial_{y_n} + y_n. 	
\end{equation}
Here the adjoints are the formal adjoints on $\mc{S}$ with respect to the Gaussian measure $\P_\T$. The operator $\bP$ admits a unique self-adjoint extension (unbounded) on $L^2(\Omega_\T)$. It has discrete spectrum and an orthonormal  basis of  eigenfunctions given by generalised Hermite polynomials, see \cite[Section 4.1]{GKRV}. 

In the following proposition, we recall the construction and main properties of ${\bf H}^0$ 
from  \cite[Propositions 4.3 \& 4.4 \& 4.5]{GKRV} (For proofs, see the reference.). To state it, we introduce the following quadratic form $\mc{Q}_0$ defined as
\[ \mc{Q}_0(u,v):=\frac{1}{2}\int_\R \E[ \pl_cu \cdot \pl_c\bar{v}+2({\bf P}u)\bar{v}]dc\]
on the dense subspace 
\begin{equation}\label{def_mcE}
\mc{E}:=\{ \psi(c)F\,|\, \psi\in C_c^\infty(\R), F\in \mc{S}\}
\end{equation} 
of $L^2(\R\times \Omega_\T)$. Its domain is 
\[\mc{D}(\mc{Q}_0)=\{ u\in \mc{H}\,|\, \mc{Q}_0(u,u)<\infty \}.\]

\begin{proposition}[\textbf{Free Hamiltonian}]\label{prop:FreeHamiltonian}
1) The quadratic form $\mc{Q}_0$ is closable with domain $\mc{D}(\mc{Q}_0)$ and generates an unbounded, self-adjoint, positive operator
\begin{equation}
\H^0:=-\frac 12 \frac{d^2}{dc^2} + \bP	
\end{equation}
on the domain $\mc{D}(\H^0)=\{u\in \mc{D}(\mc{Q}_0)\,|\, \exists C>0, \forall v\in \mc{D}(\mc{Q}_0), 
|\mc{Q}_0(u,v)|\leq C\|v\|_{\mc{H}}\}$.\\ 
2) The operator ${\bf H}^0$ generates a  strongly continuous contraction semigroup on $\mc{H}$ such that, for all $t > 0$:
\[
e^{-t\H^0}f(c,\varphi)= \E_{\varphi}[f(c+B_t+\varphi_t)],
\qquad \forall c \in \R, \forall \varphi \in H^{-s}_0(\T), \forall f \in \mc{H},	
\]
where $\E_\varphi$ denotes the expectation conditional on $\varphi$ and where $\varphi_\bullet$ is as in \eqref{varphit_def}.

3) The propagator $e^{-t{\bf H}^0}$ extends to a continuous semigroup on $L^p(\R\times \Omega_\T)$ for all $p\in [1,\infty]$ with norm $\leq 1$ and is strongly continuous when $p<\infty$.
\end{proposition}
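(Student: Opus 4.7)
The plan is to exploit the product structure: $\mathbf{H}^0$ is the ``sum'' of the one-dimensional Laplacian $-\tfrac12\partial_c^2$ acting on $L^2(\R,dc)$ and the operator $\mathbf{P}$ acting on $L^2(\Omega_\T,\P_\T)$; correspondingly, the process $c+B_t+\varphi_t$ factorises as an $\R$-valued Brownian motion and an independent family of Ornstein--Uhlenbeck processes, one for each Fourier mode. Each factor can be treated by standard spectral theory, and then combined via a tensor product argument on the full space $\mc{H}\cong L^2(\R,dc)\otimes L^2(\Omega_\T,\P_\T)$.

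For (1), I would first diagonalise $\mathbf{P}=\sum_{n\geq 1}n(\mathcal{X}_n^\ast\mathcal{X}_n+\mathcal{Y}_n^\ast\mathcal{Y}_n)$ on $\mathcal{S}$: each summand is, up to scale, a number operator of a Gaussian harmonic oscillator, and their joint spectral resolution in finitely many variables is produced by products of Hermite polynomials with eigenvalues $\sum_n n(k_n+\ell_n)$ over multi-indices with finite support. Density of generalised Hermite polynomials in $L^2(\Omega_\T,\P_\T)$ (Wiener chaos decomposition) then yields essential self-adjointness, discrete spectrum of $\mathbf{P}$, and in particular closability of the $\mathbf{P}$-piece of $\mathcal{Q}_0$. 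The $c$-piece $\tfrac12\int |\partial_c u|^2$ is the standard closable Dirichlet form on $L^2(\R)$. The sum of two closable nonnegative forms on a tensor product Hilbert space is closable, and its closure is a nonnegative self-adjoint operator, which by the Friedrichs prescription coincides with the operator defined via the domain condition stated in the proposition; this gives $\mathbf{H}^0=-\tfrac12 d^2/dc^2+\mathbf{P}$ as advertised.

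For (2), I would check the Feynman--Kac formula on the dense subspace $\mathcal{E}$ of cylindrical functions $\psi(c)F(x_1,y_1,\dots,x_N,y_N)$ and then extend by density. On such functions the right-hand side $\E_\varphi[f(c+B_t+\varphi_t)]$ factorises into the heat semigroup $e^{(t/2)\partial_c^2}$ applied to $\psi$ and finitely many independent Mehler/OU semigroups $e^{-nt(\mathcal{X}_n^\ast\mathcal{X}_n+\mathcal{Y}_n^\ast\mathcal{Y}_n)}$ applied to $F$, because only the first $N$ modes of $\varphi_t$ intervene and each $(x_n^1(t),y_n^1(t))$ is an OU process with generator $n(\mathcal{X}_n^\ast\mathcal{X}_n+\mathcal{Y}_n^\ast\mathcal{Y}_n)$. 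Multiplying these semigroups gives exactly $e^{-t\mathbf{H}^0}$ on $\mathcal{E}$. Strong continuity and the contraction property on $\mc{H}$ follow from the corresponding properties of each factor and the fact that the invariant measure $dc\otimes \P_\T$ is preserved.

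For (3), contraction on $L^p$ is immediate once (2) is established: $e^{-t\mathbf{H}^0}$ is a conditional expectation against the Markov kernel of $(c+B_t+\varphi_t)$, hence positivity-preserving, and Jensen's inequality together with invariance of the reference measure yields $\|e^{-t\mathbf{H}^0}f\|_{L^p}\leq \|f\|_{L^p}$ for all $p\in[1,\infty]$. Strong continuity on $L^p$ for $p<\infty$ then follows by a density argument: on bounded continuous cylindrical functions one has $e^{-t\mathbf{H}^0}f(c,\varphi)\to f(c,\varphi)$ pointwise as $t\to 0$ by continuity of trajectories of $(B_\bullet,\varphi_\bullet)$, dominated convergence upgrades this to $L^p$ convergence, and a standard $\varepsilon/3$ argument extends it to all of $L^p$. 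The main obstacle I expect lies in the infinite-dimensional step of (1), namely justifying that products of Hermite polynomials in finitely many variables are dense in $L^2(\Omega_\T,\P_\T)$ and that the formal infinite sum defining $\mathbf{P}$ really closes into a self-adjoint operator whose domain matches the quadratic-form domain $\mathcal{D}(\mathcal{Q}_0)$; this is where one genuinely uses the cylindrical/Gaussian structure of $\P_\T$ rather than any finite-dimensional input.
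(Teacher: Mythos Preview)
Your sketch is correct and follows the standard route. Note, however, that the paper does not actually prove this proposition: it is stated as a recollection of \cite[Propositions 4.3, 4.4, 4.5]{GKRV}, with the explicit remark ``For proofs, see the reference.'' The argument in that reference proceeds essentially along the lines you describe --- Hermite polynomial diagonalisation of $\mathbf{P}$ via the Wiener chaos, Friedrichs extension of the sum form, identification of the semigroup on cylindrical functions as heat kernel tensored with finitely many Mehler kernels, and Jensen for the $L^p$ contraction --- so there is nothing to compare.
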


Notice that, since $\mathbf{H}^0$ is self-adjoint, $\mu_0$ is an invariant measure of the (shifted) process $(\cdot + B_\bullet,\varphi_\bullet)$. 
We will also sometimes denote $\varphi (t, \theta)$ in place of $\varphi_t (\theta)$; in the first case we view $\varphi_t$ as living on a cylinder and in the second we view $\varphi_t$ as a Markov flow.

From \eqref{linkGFFdisk}, notice that  $e^{-t{\bf H}^0}f$ can also be written, for $\varphi\in H_0^{-s}(\T)$ with $s>0$ and $c\in \R$, as 
\[ e^{-t{\bf H}^0}f(c,\varphi)=\E_{\varphi}[f(c+X\circ e^{-t}|_{\T})]\]
where $X=P\varphi+X_\D$ is the Gaussian Free Field on $\D$, decomposed using the Dirichlet GFF $X_\D$ and the harmonic extension $P\varphi$ of $\varphi$ in $\D$.

\subsection{The Sinh-Gordon Hamiltonian} \label{sec:Sinh_hamiltonian}

To define the Sinh-Gordon Hamiltonian ${\bf H}$, we shall proceed as in \cite[Section 5]{GKRV} by viewing ${\bf H}$ as the generator of a Markov semigroup and showing that it also coincides with the Friedrichs extension associated to a quadratic form $\mc{Q}$. As for the Liouville theory, the potentials appearing in the Hamiltonian are defined by GMC theory. When $\gamma < \sqrt 2$, the potentials can be defined using GMC on the circle. However, when $\gamma>1$, they are not in $L^2(\Omega_\T)$ in the $\varphi$ variable, which complicates spectral-theoretic questions, such as the determination of the domain of ${\bf H}$. The case of $\gamma \in [\sqrt{2},2)$ is even more subtle as we cannot use the GMC on the circle anymore to define the potentials.

First, we define the operator ${\bf T}_t$: for $f$ bounded and continuous on $H^{-s}(\T)$ for $s>0$ and for $t\geq 0$, we set (recall that $\gamma Q=2+\frac{\gamma^2}{2}$)
\begin{equation}\label{propagatorT_t}
\begin{split}
{\bf T}_tf(c,\varphi):= \E_{\varphi}\Big[f(c+X\circ e^{-t}|_\T)e^{-\mu\int_{\A_t}|x|^{-\gamma Q}(e^{\gamma c}M_\gamma^{+,\C}(dx)+e^{-\gamma c}M^{-,\C}_\gamma(dx))} \Big],
\end{split}
\end{equation} 
where $\A_t:=\D\setminus e^{-t}\D$  and $X=P\varphi+X_\D$ is the Gaussian Free Field on $\D$ with boundary condition $\varphi$ at $\pl \D=\T$.
The definition is justified by the relation  \eqref{linkGMC} and will yield a Feynman-Kac representation of the semigroup in terms of a potential $V$ appearing in the Hamiltonian generating this semigroup.

The approach we use is via a regularization of the potential. 
For $k \geq 0$ and $\varphi \in H_0^{-s}(\T)$ for $s>0$ let
\begin{equation}
\varphi^{(k)}(\theta)
=
\sum_{|n| \leq k} \varphi_n e^{i n \theta}\in C^\infty(\T).
\end{equation}
We  define the potentials $V^{(k)}_+, V^{(k)}_-: H^{-s}_0(\T) \rightarrow \R_+$ (for any $s>0$ fixed)
by 
\begin{equation}
\begin{split}
V_{\pm}^{(k)}(\varphi)=& 
\int_0^{2\pi} e^{\pm \gamma \varphi^{(k)}(\theta) - \frac{\gamma^2}2 \E[\varphi^{(k)}(\theta)^2]} d\theta,  
\end{split}
\end{equation}
Viewed as multiplication operators on $\mc{H}$, $e^{\pm \gamma c}V^{(k)}_\pm$ are unbounded, positive, 
and symmetric on the dense subspace $\mc{E}$. Moreover, by \cite{Kahane85,rhodes2014_gmcReview}, for all $\gamma\in (0,\sqrt{2})$ 
the following limit exists $\P_\T$ almost surely 
\begin{equation}\label{Upm} 
V_\pm:=\lim_{k\to \infty}V_{\pm}^{(k)} \in L^{p}(\Omega_\T), \forall p<2/\gamma^2.
\end{equation}

\begin{lemma}\label{semigroupT_t}
The family of operators ${\bf T}_t$ for $t\geq 0$ extends to a self-adjoint contraction semigroup on $\mc{H}$ with norm $\|{\bf T}_t\|_{\mc{L}(\mc{H})}\leq 1$. 
When $\gamma<\sqrt{2}$, the classical Feynman-Kac representation for Schr\"odinger operators holds true: 
\begin{equation}\label{TtFeynmannKac}
 {\bf T}_tF(c+\varphi)=\E[ F(c+B_t+\varphi_t)e^{-\mu \int_{0}^t (e^{\gamma(c+B_s)}V_+(\varphi_s)+e^{-\gamma(c+B_s)}V_-(\varphi_s))ds}] \end{equation}
where $V_\pm$ are the potentials defined in \eqref{Upm}. Finally, ${\bf T}_t$ extends as a strongly continuous semigroup on $L^p(H^{-s}(\T),\mu_0)$ for all $p\in [1,\infty]$.
\end{lemma}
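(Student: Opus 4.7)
The plan is to approximate ${\bf T}_t$ by Fourier-truncated Schr\"odinger semigroups, establish the statement at each truncation level via classical Feynman-Kac theory for positive perturbations of ${\bf H}^0$, and then pass to the limit using Proposition \ref{prop: GMC construction}. First, I would rewrite \eqref{propagatorT_t} on the cylinder via the conformal map $\ell_1:\mc{C}^+ \to \D \setminus \{0\}$ and the identifications \eqref{linkGFFdisk}--\eqref{linkGMC} to get the cleaner expression
\[{\bf T}_tf(c+\varphi) = \E_\varphi\bigl[f(c+B_t+\varphi_t)\exp\bigl(-\mu e^{\gamma c}M_\gamma^+([0,t]\times\T) - \mu e^{-\gamma c}M_\gamma^-([0,t]\times\T)\bigr)\bigr].\]
Then, for each $k\geq 0$, I would introduce ${\bf H}^{(k)} := {\bf H}^0 + \mu(e^{\gamma c}V_+^{(k)} + e^{-\gamma c}V_-^{(k)})$, defined as the Friedrichs extension of the closed positive quadratic form $\mc{Q}_0 + \mu\langle(e^{\gamma c}V_+^{(k)} + e^{-\gamma c}V_-^{(k)})\cdot,\cdot\rangle_{\mc{H}}$ on $\mc{E}$. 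Since $V_\pm^{(k)}$ are continuous non-negative functions of the finitely many Gaussian coordinates $(x_1,y_1,\ldots,x_k,y_k)$, standard theory gives ${\bf H}^{(k)}$ self-adjoint and positive, and its semigroup positivity-preserving.

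The next step is to establish the Feynman-Kac representation at each finite $k$:
\[e^{-t{\bf H}^{(k)}}F(c+\varphi) = \E_\varphi\Bigl[F(c+B_t+\varphi_t)\exp\Bigl(-\mu\int_0^t (e^{\gamma(c+B_s)}V_+^{(k)}(\varphi_s)+e^{-\gamma(c+B_s)}V_-^{(k)}(\varphi_s))\,ds\Bigr)\Bigr].\]
I would first treat the bounded truncations $V_\pm^{(k)}\wedge M$ via Trotter's product formula applied to ${\bf H}^0$ plus a bounded perturbation, and then send $M\to\infty$ using monotonicity of the exponential in $M$ and dominated convergence (the integrand is bounded by $\|F\|_\infty$). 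The contraction estimate $\|e^{-t{\bf H}^{(k)}}\|_{L^p\to L^p}\leq 1$ follows from the domination $|e^{-t{\bf H}^{(k)}}F|\leq e^{-t{\bf H}^0}|F|$ combined with Proposition \ref{prop:FreeHamiltonian}.

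Next I would pass to the limit $k\to\infty$. A direct computation using Definition \ref{def: Markov process} identifies $e^{\gamma B_s}V_+^{(k)}(\varphi_s)\,dsd\theta$ with the Fourier-regularized GMC $M_{1,\gamma,k}^{\wedge,+}(dsd\theta)$; Proposition \ref{prop: GMC construction} then yields
\[\int_0^t e^{\gamma(c+B_s)}V_+^{(k)}(\varphi_s)\,ds \xrightarrow[k\to\infty]{} e^{\gamma c}M_\gamma^+([0,t]\times\T)\]
in probability under $\P_\varphi$, and analogously with $-$. Dominated convergence with $e^{-\mu(\cdot)}\leq 1$ then gives $e^{-t{\bf H}^{(k)}}F\to{\bf T}_tF$ pointwise, and by the $L^p$-domination $|e^{-t{\bf H}^{(k)}}F|\leq e^{-t{\bf H}^0}|F|$, in $L^p(\mu_0)$ for every $p<\infty$. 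Symmetry, contractivity $\|{\bf T}_t\|_{\mc{L}(\mc{H})}\leq 1$, and the semigroup identity ${\bf T}_{s+t}={\bf T}_s{\bf T}_t$ pass to the limit as bilinear/algebraic identities, yielding the self-adjoint semigroup claim. For $\gamma<\sqrt 2$, the $L^p$-convergence $V_\pm^{(k)}\to V_\pm$ from \eqref{Upm} upgrades this to the explicit Feynman-Kac representation stated in the lemma. Finally, for strong continuity on $L^p(\mu_0)$ ($p<\infty$), I would combine strong continuity of $e^{-t{\bf H}^0}$ (Proposition \ref{prop:FreeHamiltonian} item 3) with the pointwise estimate $|{\bf T}_tF-e^{-t{\bf H}^0}F|\leq (1-e^{-\mu\Xi_t})e^{-t{\bf H}^0}|F|$, where $\Xi_t:=e^{\gamma c}M_\gamma^+([0,t]\times\T)+e^{-\gamma c}M_\gamma^-([0,t]\times\T)$ tends to $0$ in probability as $t\to 0^+$ by Proposition \ref{prop: GMC construction}.

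The main obstacle I anticipate is establishing the Feynman-Kac formula at fixed $k$ for the unbounded positive potential $e^{\pm\gamma c}V_\pm^{(k)}$ acting on the infinite-dimensional Gaussian space $\R\times\Omega_\T$: the finite-dimensional nature of the nonlinear part simplifies matters, but the $c\to\pm\infty$ divergence requires a careful Trotter-plus-monotone-convergence argument. The other delicate point, relevant only for $\gamma\in[\sqrt 2,2)$, is that $V_\pm^{(k)}$ does not converge pointwise on $\Omega_\T$, so the identification must be carried out at the level of the time-integrated object $\int_0^t V_\pm^{(k)}(\varphi_s)ds$, using Proposition \ref{prop: GMC construction}, rather than via any putative pointwise limit of $V_\pm^{(k)}$.
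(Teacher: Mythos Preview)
Your approach is correct but genuinely different from the paper's. The paper proves the semigroup identity ${\bf T}_{t+s}={\bf T}_t{\bf T}_s$ \emph{directly} from the definition \eqref{propagatorT_t}, using the Markov property/conformal invariance of the GFF on $\D$ (the identity $X_\D(e^{-t}x)=\tilde X_\D(x)+P(X_\D\circ e^{-t}|_\T)(x)$ with $\tilde X_\D$ an independent Dirichlet GFF) together with the conformal change formula for GMC to rewrite $\int_{\A_s}|x|^{-\gamma Q}M_\gamma^{\pm,\C}(\tilde X_\D+P(X\circ e^{-t}|_\T),dx)$ as $\int_{e^{-t}\A_s}|y|^{-\gamma Q}M_\gamma^{\pm,\C}(X,dy)$; contractivity and self-adjointness are then immediate from the definition, and the Feynman-Kac representation for $\gamma<\sqrt 2$ is just a change of coordinates $x=e^{-s+i\theta}$. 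Your route instead builds the truncated semigroups $e^{-t{\bf H}^{(k)}}$, establishes Feynman-Kac at finite $k$ via Trotter plus monotone truncation, and then passes to the limit using the GMC convergence of Proposition \ref{prop: GMC construction}. This is a legitimate alternative and in fact anticipates machinery the paper only develops \emph{after} this lemma (the operators ${\bf H}^{(k)}$ and \eqref{FK_Hk} appear in the paper to identify the generator with the Friedrichs extension of $\mc{Q}$). The paper's direct GFF/GMC computation is cleaner for this lemma in isolation; your approximation argument is more systematic and would let you merge this lemma with the subsequent identification of the generator.

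Two small points. First, your pointwise estimate $|{\bf T}_tF-e^{-t{\bf H}^0}F|\leq (1-e^{-\mu\Xi_t})e^{-t{\bf H}^0}|F|$ is not correct as written: $\Xi_t$ is random and sits \emph{inside} the conditional expectation, so the right bound is $\E_\varphi[|F(c+B_t+\varphi_t)|(1-e^{-\mu\Xi_t})]$, and you then need H\"older or a density argument (restrict to $F\in L^\infty\cap L^p$) to conclude. Second, your reference to Proposition \ref{prop: GMC construction} for $\Xi_t\to 0$ as $t\to 0^+$ is off: that proposition concerns convergence of the regularization, not continuity in $t$; the statement you need follows simply from continuity from below of the GMC measure.
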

\begin{proof}
The norm estimate follows from the almost sure bound
\[ \exp\Big(-\mu  \int_{\A_t}|x|^{-\gamma Q}(e^{\gamma c}M_\gamma^{+,\C}(dx)+e^{-\gamma c}M^{-,\C}_\gamma(dx))\Big)\leq 1\]
and the fact that $\|e^{-t{\bf H}^0}\|_{\mc{L}(\mc{H})}\leq 1$. The self-adjoint property is direct, using that $e^{-t{\bf H}^0}$ is self-adjoint and that the potential is real valued.
To prove it is a semigroup, we first use the Markov property and the conformal invariance of the GFF on $\D$, 
\begin{equation}\label{GFFscaling} 
X_\D(e^{-t}x)=\tilde{X}_{\D}(x)+P(X_\D \circ e^{-t}|_{\T})(x),
\end{equation}
where $\tilde{X}_\D$ is an independent Dirichlet GFF on $\D$. We have 
\[\begin{split}
{\bf T}_t({\bf T}_sF)(c+\varphi)=& \E_\varphi[({\bf T}_sF)(c+X\circ e^{-t}|_{\T})e^{-\mu\sum_{\sigma = \pm 1}\int_{\A_t} |x|^{-\gamma Q}e^{\sigma \gamma c}M_\gamma^{\sigma,\C}(X,dx)}]\\
=&  \E_\varphi\Big[\E_{X_{\D}\circ e^{-t}|_{\T}}\Big[ F\Big((c+\tilde{X}_{\D}\circ e^{-s}+P(X_{\D}\circ e^{-t}|_{\T})\circ e^{-s}+P\varphi\circ e^{-t-s})|_{\T}\Big)\\
& \qquad \qquad\qquad  \times e^{-\mu\sum_{\sigma = \pm 1}\int_{\A_s} |x|^{-\gamma Q}e^{\sigma \gamma c}M_\gamma^{\sigma,\C}(\tilde{X}_\D+P((X_\D+P\varphi)\circ e^{-t}|_{\T}),dx)}\Big]\\
& \qquad \qquad\qquad\times e^{-\mu\sum_{\sigma = \pm 1}\int_{\A_t} |x|^{-\gamma Q}e^{\sigma \gamma c}M_\gamma^{\sigma,\C}(X,dx)}\Big],
\end{split}\]
where we use the notation $M_\gamma^{\pm}(X,dx)$ to emphasize that the GMC is defined from the GFF $X$. 
Using \eqref{GFFscaling} and $P((P\varphi)\circ e^{-t}|_{\T})=P\varphi$, we obtain, using the conformal covariance of the GMC \cite[Theorem 2.8]{Berestycki_lqggff}, that a change of variables $x=e^{t}y$ in the $\A_{s}$ integral produces
\[ \int_{\A_s}|x|^{-\gamma Q}e^{\pm \gamma c}M_\gamma^{\pm,\C}(\tilde{X}_\D+P((X_\D+P\varphi)\circ e^{-t}|_{\T}),dx)= \int_{e^{-t}\A_s}|y|^{-\gamma Q}e^{\pm \gamma c}M_\gamma^{\pm,\C}(X,dy)\]
and 
\[ \int_{\A_{t+s}}|x|^{-\gamma Q}e^{\pm \gamma c}M_\gamma^{\pm,\C}(dx)= 
\int_{\A_t}|x|^{-\gamma Q}e^{\pm \gamma c}M_\gamma^{\pm,\C}(dx)+ \int_{e^{-t}\A_{e^{-s}}}|x|^{-\gamma Q}e^{\pm \gamma c}M_\gamma^{\pm,\C}(dx).\]
Using again \eqref{GFFscaling}, we get 
\[\begin{split}
{\bf T}_t({\bf T}_sF)(c+\varphi)=&
 \E_\varphi\Big[ F\Big((c+X_{\D}\circ e^{-s-t}+P\varphi\circ e^{-t-s})|_{\T}\Big)e^{-\mu\sum_{\sigma = \pm 1}\int_{\A_{s+t}} |x|^{-\gamma Q}e^{\sigma \gamma c}M_\gamma^{\sigma,\C}(X,dx)}\Big]\\
 =& ({\bf T}_{t+s}F)(c+\varphi).
\end{split}\]
This completes the proof of the semigroup property. 

The Feynman-Kac formula is proved by rewriting $X|_{e^{-t}}=B_t+\varphi_t$ 
and decomposing the integral on $\A_t$ using the radial coordinates $x=e^{-s+i\theta}$ as in \eqref{linkGMC}. See the proof of 
\cite[Proposition 5.1]{GKRV} for more details.

For the extension on $L^p$, we simply use $\|{\bf T}_tF\|_{L^p}\leq \|e^{-t{\bf H}^0}(|F|)\|_{L^p}$ thus the result follows from the property of $e^{-t{\bf H}^0}$ in Proposition \ref{prop:FreeHamiltonian}.
\end{proof}

Being a self-adjoint contraction semigroup on $\mc{H}$, we deduce from the Hille-Yosida theorem that ${\bf T}_t=e^{-t{\bf H}_*}$ for some 
 generator ${\bf H}_*$, a positive self-adjoint operator with domain $\caD({\bf H}_*)$ consisting of $\psi\in \mc{H}$ 
 such that $\lim_{t\to 0}\frac{1}{t}(e^{-t{\bf H}_*}-1)\psi$ exists in $\mc{H}$. There is a quadratic form $\mc{Q}_*$ associated to ${\bf H}_*$ on the domain $\{ u\in \mc{H}\,|\, \lim_{t\to 0^+}t^{-1}\cjg u, (u-e^{-t{\bf H}_*}u)\cjd_{\mc{H}}<\infty \}$, and $\mc{Q}_*$ is closed (see  \cite[Chapter 1, Lemma 4.2]{S98}).
We are going to show that ${\bf H}_*$ is a Friedrichs extension associated to an explicit quadratic form $\mc{Q}$ that we shall define below using the GMC measure. First, we define:
\begin{definition}
For $k > 0$ let $\mc{Q}^{(k)}$ denote the symmetric bilinear form
\begin{equation}
\mc{Q}^{(k)}(F,G)
=
\int_\R \E \left[ \frac 12\partial_c F\partial_c \bbar{G} + (\bP F)\bbar{G}+ \mu  (e^{\gamma c}V_+^{(k)}+e^{-\gamma c}V_-^{(k)}) F \bbar{G} \,\right] dc , 
\qquad \forall F,G \in \mathcal{E},
\end{equation}
where $\mc{E}$ is defined in \eqref{def_mcE}, and let $\mc{D}(\mc{Q}^{(k)})$ be its domain, defined as the completion of $\mc{E}$ for $\mc{Q}^{(k)}$.
\end{definition}
The domain $\mc{D}(\mc{Q}^{(k)})$ injects in $\mc{H}$ and $\mc{Q}^{(k)}$ being positive, the Friedrichs extension provides a self-adjoint operator ${\bf H}^{(k)}$ on the domain 
\[ \mc{D}({\bf H}^{(k)})=\{ F\in \mc{D}(\mc{Q}^{(k)})\,|\, \exists C>0, \forall G \in \mc{D}(\mc{Q}^{(k)}),\,  
|\mc{Q}^{(k)}(F,G)|\leq C\|G\|_{\mc{H}}\}\]
and ${\bf H}^{(k)}$ is defined on this domain by 
\[ \cjg {\bf H}^{(k)}F,G\cjd_{\mc{H}}=\mc{Q}^{(k)}(F,G), \quad \forall G \in \mc{D}(\mc{Q}^{(k)}).\]
On the dense subspace $\mc{E}$ it is given by 
\[  {\bf H}^{(k)}={\bf H}^0+\mu e^{\gamma c}V_+^{(k)}+\mu e^{-\gamma c}V_-^{(k)},\]
where (for $\varphi_s$ being the random process defined by \eqref{varphit_def})
\[ V_\pm^{(k)}(\varphi_s)=\int_0^{2\pi}e^{\gamma \varphi_s^{(k)}(\theta)-\frac{\gamma^2}{2}\E[(\varphi_s^{(k)}(\theta))^2]}d\theta.\]

Now, we follow the strategy of \cite[Section 5.2]{GKRV}, and since the proofs are very similar, we do not repeat the details.
First, by the same argument as \cite[Proposition 5.3]{GKRV}, the semigroup $e^{-t{\bf H}^{(k)}}$ satisfies the Feynman-Kac formula:
\begin{equation}\label{FK_Hk}
 e^{-t{\bf H}^{(k)}}F(c,\varphi)=\E_{\varphi}[ F(c+B_t+\varphi_t)e^{-\mu\int_0^t (e^{\gamma (c+B_s)}V_+^{(k)}(\varphi_s)+e^{-\gamma (c+B_s)}V_-^{(k)}(\varphi_s))ds}].
 \end{equation}
For $F,G\in \mc{E}$, we define the quadratic form 
\begin{equation}\label{Q_as_limit}
\mc{Q}(F,G):=\lim_{k\to \infty}\mc{Q}^{(k)}(F,G).
\end{equation}
To see that the limit exists, we rewrite $\mc{Q}^{(k)}(F,G)$ using the Cameron-Martin theorem: write 
$F(c,\varphi)=F(c,x_1,y_1,\dots,x_n,y_n)$ and $G(c,\varphi)=G(c,x_1,y_1,\dots,x_n,y_n)$ and remark that for $k\geq n$, we have 
\begin{equation}\label{shift}
 \int_{\R} \E[ e^{\gamma c}V_+^{(k)}(\varphi)F\bbar{G}+e^{-\gamma c}V_-^{(k)}(\varphi)F\bbar{G}]dc =\int_{\R}\int_0^{2\pi} 
\E[ e^{\gamma c}F_{+}\bbar{G}_++e^{-\gamma c}F_{-}\bbar{G}_- ]dc d\theta, 
\end{equation}
where 
\[ F_{\pm}(\theta,c,x_1,y_1,\dots,x_n,y_n)=F(c,x_1\pm \gamma\cos(\theta),y_1\mp \gamma\sin(\theta),\dots,x_n\pm \frac{\gamma}{\sqrt{n}}\cos(n\theta),y_n\mp \frac{\gamma}{\sqrt{n}}\sin(n\theta)).
\]
In particular, we see that \eqref{shift} is independent of $k$ as long as $k\geq n$, and the limit\footnote{This holds for all $\gamma > 0$, but the closability of the form, i.e.\ Lemma \ref{lem:Q_equal_Qstar} uses that $\gamma \in (0,2)$.} \eqref{Q_as_limit} exists and is finite. 
The following then holds.

\begin{lemma}\label{lem:Q_equal_Qstar}
The quadratic forms $\mc{Q}$ and $\mc{Q}_*$ coincide on $\mc{E}$; in particular $\mc{Q}$ is closable.
\end{lemma}
\begin{proof} The proof is essentially the same as that of \cite[Lemma 5.4]{GKRV}; we briefly recall the main steps for the reader's convenience  and refer there for more details. Recall that $Q:=\gamma/2+2/\gamma$. One has to prove that for $F,G\in \mc{E}$, 
\[\begin{split} 
D_t:=&\int_{\R} \E[ F((c+X_{\mathbb D}\circ e^{-t} +P\varphi \circ e^{-t})|_{\mathbb T})G(c+\varphi)e^{-\mu\sum_{\sigma = \pm 1}\int_{\A_t}|x|^{-\gamma Q}e^{\sigma \gamma c}M_\gamma^{\sigma,\C}(dx)}]dc\\
=&\cjg F,G\cjd_{\mc{H}}-t\mc{Q}(F,G)+o(t).
\end{split}\]
The idea is to use the free field propagator: let $W_t:=\sum_{\sigma = \pm 1}e^{\sigma \gamma c}\int_{\A_t}|x|^{-\gamma Q}M_\gamma^{\sigma,\C}(dx)=:W_t^++W_t^-$, then
\[ \begin{split}
D_t=&\cjg (e^{-t{\bf H}^0}-{\rm Id})F,G\cjd_{\mc{H}}+ (1+o(1))\int_{\R} \E[ F((c+X_{\mathbb D}\circ e^{-t} +P\varphi \circ e^{-t})|_{\mathbb T})G(c+\varphi)(e^{-\mu W_t}-1)]dc \\
=& -t\mc{Q}_0(F,G)
\\
&+(1+o(1))\int_{\R} \E[ (F((c+X_{\mathbb D}\circ e^{-t} +P\varphi \circ e^{-t})|_{\mathbb T})-F(c+\varphi))G(c+\varphi)(e^{-\mu W_t}-1)]dc\\
& - (1+o(1))\int_{\R} \E[ F(c+\varphi)G(c+\varphi)(1-e^{-\mu W_t})]dc\\
=:& -t\mc{Q}_0(F,G)+R_t-S_t.
\end{split}\]

We claim that the term $S_t$ has the following asymptotic behaviour as $t\to 0$
\[ S_t=t(1+o(1))\mu \int_{\R}( e^{\gamma c}\E[F_+G_+]+e^{-\gamma c}\E[F_-G_-])dc.\] 
We follow closely the proof of \cite[Lemma 5.4]{GKRV}. It suffices to assume $F,G$ positive. 
First the upper bound is simply a consequence of the bound $1-e^{-x}\leq x$ to reduce to an estimate on $\int_{\R} \E[ F(c+\varphi)G(c+\varphi)W_t]dc$
and then we apply the  estimate of \cite[Lemma 5.4]{GKRV} for each term $W_\pm^t$ to get 
\[ S_t\leq t(1+o(1))\mu \int_{\R}( e^{\gamma c}\E[F_+G_+]+e^{-\gamma c}\E[F_-G_-])dc.\] 
The lower bound uses the inequality $(1-e^{-x})\geq xe^{-x}$ and reduces the analysis to estimating 
\begin{align*}
\int_{\R} \E[ FG W_t]dc+\int_{\R} \E[ FGW_t(e^{-\mu W_t}-1)]dc.
\end{align*}
As above, the first term is equal to $t(1+o(1))\mu \int_{\R}( e^{\gamma c}\E[F_+G_+]+e^{-\gamma c}\E[F_-G_-])dc$.
To prove that the second term is $o(t)$, the argument in the proof of \cite[Lemma 5.4]{GKRV} tells us that it suffices to show that there is $q>1$
such that the function 
\[f^\pm_t(c,r):=\E[ |1-e^{\mu Z^\pm_t(re^{i\theta})}|^q]^{1/q}\]
goes to $0$ uniformly as $t\to 0$, where 
\[Z^\pm_t(re^{i\theta}):=\sum_{\sigma=\pm 1}
e^{\sigma\gamma c}\int_{\A_t}|x|^{-\gamma Q}|x-re^{i\theta}|^{\mp \sigma \frac{\gamma^2}{2}}M^{\sigma,\C}_\gamma(dx).\] 
This function is increasing as function of $t$.
We can regularize it at scale $\delta>0$ small, by setting $f^{\pm,\delta}_t(c,r):=\E[ |1-e^{\mu Z^{\pm,\delta}_t(re^{i\theta})}|^q]^{1/q}$
with $Z^{\pm,\delta}_t(re^{i\theta})$ defined as $Z^{\pm}_t(re^{i\theta})$ but replacing the powers $|x-re^{i\theta}|^{\mp\sigma \frac{\gamma^2}{2}}$ by
 $\max(|x-re^{i\theta}|,\delta)^{\mp\sigma \frac{\gamma^2}{2}}$; the function $f^{\pm,\delta}_t$ is continuous. Then the proof is mutatis mutandis like in the 
 proof of \cite[Lemma 5.4]{GKRV}: it shows that $f^{\pm,\delta}_t\to f^{\pm}_t$ uniformly as $\delta\to 0$
  once we have observed that for $\alpha\in (0,1)$ such that $\alpha q<\frac{2}{\gamma}(Q-\gamma)$ (to ensure that the expectation is finite using \cite[Lemma 3.10]{DKRV16}) and $\alpha q\leq 1$
 \[\begin{split} 
 \| f^\pm_{t}(c,r)- f^{\pm,\delta}_{t}(c,r)\|\leq & \mu \E[ |Z^\pm_t(re^{i\theta})-Z^{\pm,\delta}_t(re^{i\theta})|^{\alpha q}]^{1/q}\\
\leq &  \sum_{\sigma=\pm 1}\mu e^{\sigma \alpha \gamma c}\E[ |Z^{\pm,\sigma}_t(re^{i\theta})-Z^{\pm,\sigma,\delta}_t(re^{i\theta})|^{\alpha q}]^{1/q},
 \end{split}\]
 where $Z^{\pm,\sigma}_t=\int_{\A_t}|x|^{-\gamma Q}|x-re^{i\theta}|^{\mp \sigma \frac{\gamma^2}{2}}M^{\sigma,\C}_\gamma(dx)$ and similarly for $Z^{\pm,\sigma,\delta}_t$. Indeed, each term $\sigma=\pm 1$ above has the same property to those treated in  \cite[Lemma 5.4]{GKRV} (here the $c$ variable is varying in a compact set due to our assumption on $F,G$, it is thus harmless). All this shows that $f_t^\pm$ are continuous and go pointwise to $0$ as $t\to 0$, but since $f_t^\pm$ is decreasing as $t\to 0$, Dini's theorem shows that the convergence is uniform with respect to the variables $c,r$.

Next for the term $R_t$, using that $1-e^{-x}\leq x$, we get 
\[ |R_t| \leq \mu \sum_{\sigma=\pm 1}\int_\R \E[ |F_\sigma(c+B_t+\varphi_t)-F_\sigma(c+\varphi)|\times |G_\sigma(c+\varphi)|W_t^{\sigma}]dc\]
and  the estimates of the proof of \cite[Lemma 5.4]{GKRV} can be applied directly to each term $\sigma = \pm 1$, giving $|R_t|=o(t)$. 
\end{proof}

Let $\mc{D}(\mc{Q})$ be the closure of $\mc{E}$ for the norm induced by $\mc{Q}$; note that $\mc{D}(\mc{Q})\subset \mc{H}$. The same argument as in \cite[Proposition 5.5.]{GKRV} shows 
\begin{lemma}\label{lem:Q_equal_Qstar}
For $\gamma\in (0,2)$, the quadratic form $\mc{Q}$ with domain $\mc{D}(\mc{Q})$ defines a self-adjoint operator ${\bf H}$ with domain denoted $\mc{D}({\bf H})\subset \mc{D}(\mc{Q})$ by $\mc{Q}(F,G)=\cjg {\bf H}F,G\cjd_{\mc{H}}$ for $F\in \mc{D}({\bf H})$ and $G\in \mc{D}(\mc{Q})$, and ${\bf H}={\bf H}_*$.
\end{lemma}
Here the only difference in the proof, compared to  \cite[Proposition 5.5.]{GKRV}, is that the Cameron-Martin shifts contain two terms rather than one single term, but each term is already dealt with in  the proof of 
\cite[Proposition 5.5.]{GKRV}, using the Feynman-Kac representation \eqref{FK_Hk}.

\subsection{Compact resolvent and discrete spectrum}
In this section, we will show that the resolvent of ${\bf H}$ is compact and thus ${\bf H}$ has purely discrete spectrum. 

We first describe mapping properties of the propagator ${\bf T}_t=e^{-t{\bf H}}$. This will be used to deduce estimates on the eigenfunctions of ${\bf H}$.
\begin{lemma}\label{LinftyL2}
1) For each $t>0$, the propagator is bounded as a map\footnote{Recall that $\psi \in e^{-N|c|}L^p(H^{-s}(\mathbb T),\mu_0)$ means there exists $f \in L^p(H^{-s}(\mathbb T),\mu_0)$ such that $\psi = e^{-N|c|}f$.}
\[ e^{-t{\bf H}}: L^\infty(H^{-s}(\T),\mu_0)\to e^{-N |c|} L^p(H^{-s}(\T),\mu_0)\]
for all $s>0$, $p\in [1,\infty)$ and $N\geq 0$, as well as for $(p,N)=(\infty,0)$. 
Here $c$ denotes the zero Fourier mode of a distribution in $ H^{-s}(\T)$.
In particular $e^{-t{\bf H}}:L^\infty(H^{-s}(\T),\mu_0)\to \mc{H}$ is bounded.
2) For each $t>0$ and $p\in [2,1+e^{2t})$ the propagator is bounded as a map
\begin{equation}\label{prop_L^2L^p}
e^{-t{\bf H}}: L^2(H^{-s}(\T),\mu_0)\to L^p(H^{-s}(\T),\mu_0).
\end{equation}
3) Let $q>1$ and $t>0$, then for all $N>0$ and all $p\in [1,q)$ the propagator is bounded as a map
\begin{equation}\label{prop_Weighted LpL^q}
e^{-t{\bf H}}: L^q(H^{-s}(\T),\mu_0)\to e^{-N|c|}L^p(H^{-s}(\T),\mu_0).
\end{equation}

\end{lemma}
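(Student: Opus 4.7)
All three parts rest on the Feynman--Kac representation \eqref{TtFeynmannKac} from Lemma \ref{semigroupT_t}. Since $V_\pm\ge 0$, the exponential weight is pointwise bounded by $1$, yielding the domination $|e^{-t\mathbf{H}}F|\le e^{-t\mathbf{H}^0}|F|$. The boundary case $(p,N)=(\infty,0)$ of (1) is immediate from this. The nontrivial cases of (1) and (3) exploit the confining nature of the potential, whereas (2) is handled purely at the free level using the domination.

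For (2), I will establish $\|e^{-t\mathbf{H}^0}F\|_{L^p(\mu_0)}\le C_{p,t}\|F\|_{L^2(\mu_0)}$ for $p\in[2,1+e^{2t})$. Using independence of the Brownian motion $B_t$ and the OU process $\varphi_t$, one has $e^{-t\mathbf{H}^0}F(c,\varphi)=\mathbb{E}^B[(e^{-t\mathbf{P}}F(c+B_t,\cdot))(\varphi)]$. Minkowski's integral inequality together with Nelson's hypercontractivity for $\mathbf{P}$ (whose spectral gap is $1$) yields
\[\|e^{-t\mathbf{H}^0}F(c,\cdot)\|_{L^p(\mathbb{P}_{\mathbb{T}})}\le (S_tg)(c),\qquad g(c):=\|F(c,\cdot)\|_{L^2(\mathbb{P}_{\mathbb{T}})},\]
where $S_t$ denotes the Brownian heat semigroup on $\mathbb{R}$. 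Young's convolution inequality gives $\|S_tg\|_{L^p(dc)}\le\|p_t\|_{L^{2p/(p+2)}(dc)}\|g\|_{L^2(dc)}$, and the identity $\|g\|_{L^2(dc)}=\|F\|_{L^2(\mu_0)}$ closes the argument after integrating in $c$.

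For (1) and (3), the identity $\int_0^t e^{\pm\gamma(c+B_s)}V_\pm(\varphi_s)\,ds=e^{\pm\gamma c}M_\gamma^\pm([0,t]\times\mathbb{T})$ makes the $c$-dependence of the Feynman--Kac weight explicit. For (1), Jensen moves the $p$-th power inside the expectation; for (3), Hölder applied to the Feynman--Kac expectation at exponents $q$ and $q^*$ separates $|F|^q$ from a power of the weight. The $|F|^q$ factor is absorbed into $\|F\|_{L^q(\mu_0)}^p$ via the $L^1$-invariance of $\mu_0$ under $e^{-t\mathbf{H}^0}$, so both parts reduce to showing, for every $\kappa>0$ and a rescaled constant $\mu'>0$,
\[\int_{\mathbb{R}} e^{\kappa|c|}\,\mathbb{E}\bigl[e^{-\mu'(e^{\gamma c}M_\gamma^+([0,t]\times\mathbb{T})+e^{-\gamma c}M_\gamma^-([0,t]\times\mathbb{T}))}\bigr]\,dc<\infty.\]
By the $c\mapsto-c$ symmetry (which exchanges $M_\gamma^+$ and $M_\gamma^-$), restrict to $c\ge 0$, where the integrand is bounded by $e^{\kappa c}\mathbb{E}[e^{-\mu' e^{\gamma c}M_\gamma^+}]$. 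For $a>0$, split $\mathbb{E}[e^{-aM_\gamma^+}]\le\mathbb{P}(M_\gamma^+\le a^{-1/2})+e^{-a^{1/2}}$; Markov applied to $(M_\gamma^+)^{-k}$ bounds the first term by $a^{-k/2}\mathbb{E}[(M_\gamma^+)^{-k}]$, and Lemma \ref{lem: GMC negative moments} furnishes $\mathbb{E}[(M_\gamma^+)^{-k}]<\infty$ for every $k>0$. Taking $a=\mu' e^{\gamma c}$ produces decay in $c$ that dominates any $e^{\kappa c}$.

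The main obstacle is the Hölder bookkeeping in (3): one needs the power $\alpha$ applied to the Feynman--Kac weight to satisfy $\alpha\ge 1$ so that Jensen's inequality reduces matters to the above integral, while the prefactor $e^{N|c|}$ must be absorbed without destroying integrability. A direct computation gives $\alpha=p(q-1)/(q-p)\ge 1$ for all $1\le p<q$, so the reduction always goes through; the analytic content is concentrated in the GMC negative moment estimate, which encodes the confining effect of the Sinh-Gordon potential at the level of probabilistic tail bounds.
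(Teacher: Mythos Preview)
Your argument is correct. The one caveat is that you cite the Feynman--Kac formula \eqref{TtFeynmannKac}, which the paper establishes only for $\gamma<\sqrt{2}$ (since $V_\pm$ are defined via circle GMC). For the full range $\gamma\in(0,2)$ you should instead invoke the representation \eqref{propagatorT_t}, which after the change of variables \eqref{linkGMC} gives directly the weight $e^{-\mu\sum_\sigma e^{\sigma\gamma c}M_\gamma^\sigma([0,t]\times\mathbb{T})}$ you actually use; the intermediate identity through $V_\pm$ is then unnecessary.

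Your treatment of part (2) differs from the paper's. The paper expands $F$ in generalized Hermite polynomials and uses the moment bound $\|h_n\|_p\le c(p)\,n^{-1/4}(p-1)^{n/2}\|h_n\|_2$ together with the eigenvalue action of $e^{-t\mathbf{P}}$, summing the resulting geometric-type series over partitions. You instead invoke Nelson hypercontractivity for $e^{-t\mathbf{P}}$ (valid since the bottom of the nonzero spectrum of $\mathbf{P}$ is $1$) and Young's inequality for the heat kernel in $c$; this is shorter and conceptually cleaner, and the Hermite argument is essentially a hands-on proof of the hypercontractivity you quote. For parts (1) and (3), both proofs reduce to controlling $\int e^{\kappa|c|}\mathbb{E}[e^{-\mu' e^{\gamma|c|}M_\gamma^\pm}]\,dc$ via the negative GMC moments of Lemma \ref{lem: GMC negative moments}; the paper does this by the substitution $y=e^{\gamma c}$ and the Gamma identity $\int_0^\infty y^{s-1}e^{-ay}\,dy=\Gamma(s)a^{-s}$, whereas your tail-splitting $\mathbb{E}[e^{-aM}]\le\mathbb{P}(M\le a^{-1/2})+e^{-\sqrt{a}}$ combined with Markov on $M^{-k}$ achieves the same decay. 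Your H\"older bookkeeping in (3) (H\"older at $(q,q^*)$ inside $\mathbb{E}_\varphi$, then H\"older in $\mu_0$, then Jensen with exponent $\alpha=p(q-1)/(q-p)\ge 1$) is organized differently from the paper's (Jensen to bring $p$ inside, then a single H\"older at $(q/p,r)$), but both are valid.
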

\begin{proof} 1) The $p=\infty$ case is obvious. Let us write, using \eqref{propagatorT_t} and $|x|^{-\gamma Q}\geq 1$ when $|x|\leq 1$
\[ \begin{split}
|e^{-t{\bf H}}F(c,\varphi)|\leq &  \|F\|_{L^\infty} \E_{\varphi}\Big[e^{-\mu\int_{\A_t}|x|^{-\gamma Q}(e^{\gamma c}M_\gamma^{+,\C}(dx)+e^{-\gamma c}M^{-,\C}_\gamma(dx))} \Big]\\
\leq &   \|F\|_{L^\infty} \Big(\E_{\varphi}\Big[{\bf 1}_{c\geq 0}e^{-\mu e^{\gamma c} \int_{\A_t}M_\gamma^{+,\C}(dx)}+{\bf 1}_{c\leq 0}e^{-\mu e^{-\gamma c} \int_{\A_t}M_\gamma^{-,\C}(dx)} \Big]\Big).
\end{split}\]
Using H\"older inequality and a change of variable $e^{\gamma c}=y$, we get  for each $p\in [1,\infty)$ and $N\geq 0$
\begin{align*}
 &\int_0^\infty \int_{H^{-s}(\T)}e^{pN c}\Big(\E_{\varphi}\Big[e^{-\mu e^{\gamma c} \int_{\A_t}M_\gamma^{+,\C}(dx)}\Big]\Big)^pd\P_\T(\varphi)dc\\
& \leq  \int_0^\infty e^{pN c} \E\Big[e^{-p\mu e^{\gamma c} \int_{\A_t}M_\gamma^{+,\C}(dx)}\Big]dc= \frac{1}{\gamma} \int_1^\infty y^{\frac{N p}{\gamma}} \E\Big[e^{-p\mu y \int_{\A_t}M_\gamma^{+,\C}(dx)}\Big]\frac{dy}{y}\\
& \leq \frac{1}{\gamma} \int_0^\infty y^{\frac{N p}{\gamma}}  \E\Big[e^{-p\mu y \int_{\A_t}M_\gamma^{+,\C}(dx)}\Big]dy=\gamma^{-1}\Gamma(\tfrac{N p}{\gamma}+1)(p\mu)^{-1-\frac{Np}{\gamma}}
\E\Big[ \Big(M_\gamma^{+,\C}(\A_t)\Big)^{-1-\frac{N p}{\gamma}}\Big]<\infty,
\end{align*}
where in the final inequality we used the fact that the GMC has negative moments of any order $m<0$ (Lemma \ref{lem: GMC negative moments}). The same estimate holds on $(-\infty,0)$ with the  $M_\gamma^{-,\C}(dx)$ term and this proves the claim.

2) Next, we claim that $e^{-t{\bf H}^0}:\mc{H}\to L^p(H^{-s}(\T),\mu_0)$ for $p<1+e^{2t}$. 
We introduce the standard Hermite polynomials $(h_n)_{n \geq 0}$ on $\R$. Recall the following bound between the $L^2$ and $L^p$ norms of the Hermite polynomials for $p>2$: there is $c(p)>0$ such that for all $n\geq 0$
\begin{equation}\label{l2lp}
\|  h_n \|_p \leq c(p) n^{-1/4} (p-1)^{\frac{n}{2}} \|h_n \|_{2}.
\end{equation}
where the $L^p$ norm is with respect to the Gaussian measure $e^{-x^2/2}dx/\sqrt{2\pi}$. Consider  the normalised Hermite polynomials
for $\k=(k_1,\dots,k_N)\in \N^{N}$ and $\l=(l_1,\dots,l_{N'})\in \N^{N'}$ (with $N,N'\in \N$)
\[
\psi_{ \k, \l }(\varphi)= \prod_{n \geq 1}  \frac{h_{k_n}(x_n)}{\sqrt{k_n!}}\frac{h_{l_n}(y_n)}{\sqrt{l_n!}}, \quad  \quad  \quad    \|\psi_{ \k, \l }\|_{L^2(\Omega_\T)}=1.
\]
Each $F \in L^2(H^{-s}(\T))$ decomposes under the form 
$F(c+\varphi)=\sum_{\k,\l}F_{ \k, \l }(c)\psi_{\k,\l}(\varphi)$ with $\|F\|_2^2=\sum_{\k,\l}\|F_{\k,\l}\|_{L^2(\R)}^2$
and therefore
\begin{equation*}
(e^{-t {\bf H}^0}F)(c,\varphi)=\sum_{\k,\l}e^{-t(|\k|+|\l|)}F_{\k,\l}(c)\psi_{\k,\l}(\varphi)
\end{equation*}
where $| \k |= \sum_{n=1}^{\infty} n k_n$ (similarly for $\l$). Using \eqref{l2lp} and that $e^{t\pl_c^2/2}:L^2(\R)\to L^p(\R)$ is bounded for $p\geq 2$,
\begin{align*}
\| e^{-t {\bf H}^0} F \|_{L^p(H^{-s}(\T))} & \leq   \sum_{ \k, \l } e^{-t  (|\k|+|\l|)}  (\sqrt{p-1})^{\sum_n (k_n+l_n)}   \|  e^{t\pl_c^2/2}F_{ \k, \l } \|_{L^p(\R)}  \\
&  \leq C_p   \sum_{ \k, \l } (e^{-2t}(p-1))^{\frac{|\k|+|\l|}2}    \| F_{ \k, \l } \|^2_{L^2(\R)}. 
\end{align*}
This converges for $e^{-2t}(p-1)<1$ since $\sum_{ \k, \l }  (e^{-2t}(p-1))^{\frac{|\k|+|\l|}2}= \sum_{n=1}^{\infty} \sum_{m=1}^\infty p(n) p(m) (e^{-2t}(p-1))^{\frac{n+m}2 }$, 
where $p(n)$ is the number of partitions of $n$ and is bounded by $e^{C \sqrt{n}}$ for some $C>0$ when $n\to \infty$. 
Since $|e^{-t{\bf H}}F|\leq e^{t{\bf H}^0}(|F|)$ pointwise, we deduce that \eqref{prop_L^2L^p} holds.

3) Let $F\in L^q(H^{-s}(\T))$ for $q>1$ and $p<q$, let $r>1$ be defined by $p/q+1/r=1$ . Let us write 
\[W_t(c+X):=\int_{\A_t}|x|^{-\gamma Q}(e^{\gamma c}M_\gamma^{+,\C}+e^{-\gamma c}M^{-,\C}_\gamma(dx))\]
with $X=X_\D+P\varphi$ being the GFF on the unit disk as before.
We have using Jensen's inequality
\[\begin{split}
\|e^{N|c|}e^{-t{\bf H}}F\|_{L^p}^p \leq & \int_{\R} e^{pN|c|}\E[ \E_\varphi[|F(c+B_t+\varphi_t)|^pe^{-p\mu W_t(c+X)}]]dc\\
\leq & \Big(\int_{\R} \E[ \E_\varphi[|F(c+B_t+\varphi_t)|^{q}]dc \Big)^{p/q} \Big(\int_{\R}  e^{prN|c|}\E[\E_\varphi[e^{-pr\mu W_t(c+X)}]]dc\Big)^{1/r}\\
\leq & C_{p,q}\|e^{-t{\bf H}^0}F\|^{p}_{L^{q}}\leq  C'_{p,q}\|F\|^{p}_{L^{q}}
\end{split}
\]
for some constants $C_{p,q},C'_{p,q}$, where we used the bounds described in 1) above to estimate the term involving $W_t$.
\end{proof}

Notice that $e^{-t{\bf H}}$ also maps $L^p\to L^p$ for all $p\geq 2$ by interpolation between $L^\infty\to L^\infty$ and $L^2\to L^2$, and by duality (since it is symmetric), it also maps $L^p\to L^p$ for all $p\in [1,2]$.

Finally, let us describe the integral kernel of the propagator.
\begin{lemma}\label{Int_kernel_prop}
1) For $t>0$, the propagator $e^{-t{\bf H}}$ can be written under the form 
\begin{equation}\label{prop_kernel} 
e^{-t{\bf H}}F(c+\varphi)=\int_{H^{-s}(\T)}\mc{A}_{\A_t}(c,\varphi,c',\varphi')F(c'+\varphi')d\mu_{0}(c'+\varphi')
\end{equation}
for $s>0$ where the integral kernel is the measurable function for $\varphi,\varphi'\in H^{-s}_0(\T)$ and $c,c'\in \R$
\[\mc{A}_{\A_t}(c,\varphi,c',\varphi')=\mc{A}^0_{\A_t}(c,\varphi,c',\varphi')\E_{\varphi,\varphi'}\Big[e^{-\mu \sum_{\sigma=\pm 1}\int_{\A_t}|x|^{-2-\frac{\gamma^2}{2}}M_{\gamma}^{\sigma,\C}(\phi_{\A_t},dx)}\Big]\]
where $\phi_{\A_t}=X_{\A_t,D}+P_{\A_t}(c+\varphi,c'+\varphi')$ with $X_{\A_t,D}$ the Dirichlet GFF on $\A_t$ and $P_{\A_t}(c+\varphi,c'+\varphi')$ the harmonic function in $\A_t$ with boundary values $(c+\varphi,c'+\varphi')$ on the boundary $\T$ and $e^{-t}\T$, the expectation is taken with respect to $X_{\A_t,D}$,  and the function $\mc{A}^0_{\A_t}$ is defined by 
\begin{equation}\label{free_prop_formula} 
\mc{A}^0_{\A_t}(c,\varphi,c',\varphi')=\frac{1}{\sqrt{2\pi t}}(\prod_{n=1}^\infty(1-e^{-2tn})^{-1})e^{-\frac{(c-c')^2}{2t}} e^{-\sum_n
\frac{1}{4\sinh(tn)}(q_{tn}(x_n,x_n')+q_{tn}(y_n,y_n))}
\end{equation}
where $\varphi(\theta)=\sum_{n\not=0} \frac{x_n+{\rm sign}(n)iy_n}{2\sqrt{n}}e^{in\theta}$, $q_s(x,x')=x^2e^{-s}+{x'}^2e^{-s}-2xx'$ and 
$\varphi'(\theta)=\sum_{n\not=0} \frac{x_n'+{\rm sign}(n)iy_n'}{2\sqrt{n}}e^{in\theta}$.\\
2) The integral kernel $\mc{A}_{\A_t}$ belongs to $\mc{A}_{\A_t}\in L^2((H^{-s}(\T))^2,\mu_0^{\otimes 2})$, the operator $e^{-t{\bf H}}$ is Hilbert-Schmidt and thus compact on $\mc{H}$.
\end{lemma}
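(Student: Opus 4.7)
The plan is to prove the two parts in sequence, with part (1) essentially a computation and part (2) being the conceptual content.

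For part (1), I would derive the kernel formula from the Feynman--Kac representation $\eqref{propagatorT_t}$ of $e^{-t\mathbf H}$ by conditioning the GFF on the annulus $\A_t$ on its boundary values. Concretely, for the free theory ($\mu=0$), the kernel $\mathcal A^0_{\A_t}$ is built out of the explicit transition kernels of the Markov process $c+B_\bullet+\varphi_\bullet$ from Definition \ref{def: Markov process}: the Brownian heat kernel $(2\pi t)^{-1/2}e^{-(c-c')^2/(2t)}$ in the zero mode, and for each $n\ge1$ the two independent Ornstein--Uhlenbeck transition densities (Mehler formulas) in $(x_n,y_n)$, with decay rate $n$ and stationary variance $1$. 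A direct computation of the Mehler kernels normalised against the stationary Gaussian density $\mathbb P_\T$ produces the prefactors $(1-e^{-2tn})^{-1}$ and the quadratic forms $q_{tn}$ appearing in $\eqref{free_prop_formula}$. For $\mu>0$, the domain Markov property of the GFF on $\mathbb D$ gives the decomposition $X = X_{\A_t,D}+P_{\A_t}(c+\varphi,c'+\varphi')$ on $\A_t$ conditionally on the boundary values $(c+\varphi,c'+\varphi')$ on the two boundary circles. Substituting this into $\eqref{propagatorT_t}$ and integrating against $d\mu_0(c'+\varphi')$ yields the claimed representation $\eqref{prop_kernel}$, with the exponential in the integrand becoming a conditional expectation over the independent Dirichlet GFF $X_{\A_t,D}$.

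For part (2), the core difficulty is that $\mathcal A^0_{\A_t}$ by itself is \emph{not} in $L^2(\mu_0^{\otimes 2})$: the Brownian heat kernel in the $c$--variable is not Hilbert--Schmidt on $L^2(\R,dc)$. The exponential factor coming from $\cosh(\gamma\phi)$ must supply decay in $|c|$ and $|c'|$. I would use the pointwise bound
\[
 0 \leq \mathbb E_{\varphi,\varphi'}\Big[e^{-\mu \sum_{\sigma=\pm 1}\int_{\A_t}|x|^{-\gamma Q}M_\gamma^{\sigma,\C}(\phi_{\A_t},dx)}\Big] \leq 1,
\]
together with the sharper estimate in the regime $c\geq 0$ (resp.\ $c\leq 0$) that the $\sigma=+1$ (resp.\ $\sigma=-1$) term inside the exponent grows like $e^{\gamma|c|}$ times a positive random mass. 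A Girsanov/Cameron--Martin shift transfers the deterministic harmonic extension $P_{\A_t}(c+\varphi,c'+\varphi')$ into the background, leaving a GMC on the annulus with deterministic density $e^{\gamma P_{\A_t}}$ depending on $c,c',\varphi,\varphi'$ but bounded below by a strictly positive function of the radial coordinate only. Combined with Lemma \ref{lem: GMC negative moments} (negative moments of GMC of all orders on compact sets of $(0,\infty)\times[0,2\pi]$), this yields a bound of the form
\[
 \mathbb E_{\varphi,\varphi'}[e^{-\mu W}] \leq C_N e^{-N(|c|+|c'|)}
\]
for every $N>0$, uniformly in $\varphi,\varphi'$ (the uniformity following from Jensen to absorb the boundary terms in the harmonic extension).

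Plugging this estimate into the explicit Gaussian formula $\eqref{free_prop_formula}$ for $\mathcal A^0_{\A_t}$, I would bound $|\mathcal A_{\A_t}|^2$ by $C_N^2 e^{-2N(|c|+|c'|)}(\mathcal A^0_{\A_t})^2$ and integrate. The integrals in $(x_n,y_n,x_n',y_n')$ are standard Gaussian (they are essentially the $L^2$-norm squared of the Mehler kernel against the stationary Gaussian, which is finite since $q_{tn}$ is positive definite and the prefactor $(1-e^{-2tn})^{-1}$ contributes a convergent infinite product $\prod_n(1-e^{-4tn})^{-1/2}$), while the remaining $(c,c')$ integral converges thanks to the exponential decay absorbing the only non-Gaussian part. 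This shows $\mathcal A_{\A_t}\in L^2(\mu_0^{\otimes 2})$, and hence $e^{-t\mathbf H}$ is Hilbert--Schmidt on $\mathcal H$, hence compact; by functional calculus $(\mathbf H+1)^{-1}$ is then compact, so $\mathbf H$ has discrete spectrum.

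The main obstacle I anticipate is the bookkeeping in the decay estimate: the harmonic extension $P_{\A_t}(c+\varphi,c'+\varphi')$ mixes $c$ and $c'$ linearly across $\A_t$ (with weight $(t+\log|x|)/t$ on $c$ and $-\log|x|/t$ on $c'$), so the suppression one gets from the GMC exponential at a given point of the annulus is controlled by a convex combination of $c$ and $c'$, not by either alone. Getting independent decay in $|c|$ and $|c'|$ requires localising the GMC integral to two complementary sub-annuli (one near each boundary circle) and estimating each via the negative-moment lemma separately; this is the only genuinely non-routine step.
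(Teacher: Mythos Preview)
Your approach to part (1) is correct and matches the paper: the free kernel $\mathcal{A}^0_{\A_t}$ is assembled from the Brownian heat kernel and the Mehler transition densities of the Ornstein--Uhlenbeck modes (the paper cites \cite[Proposition~6.1]{GKRV21_Segal} for this), and the full kernel then follows from the domain Markov decomposition of the GFF on the annulus.

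For part (2) there is a genuine gap. You claim $\E_{\varphi,\varphi'}[e^{-\mu W}] \leq C_N e^{-N(|c|+|c'|)}$ \emph{uniformly} in $\varphi,\varphi'$, with ``Jensen to absorb the boundary terms in the harmonic extension''. I do not see how Jensen delivers this. The potential depends on $\varphi,\varphi'$ through $e^{\pm\gamma P_{\A_t}(\varphi,\varphi')}$, and $P_{\A_t}(\varphi,\varphi')$ at any interior point is unbounded as $\varphi,\varphi'$ range over $H^{-s}_0(\T)$. Your lower bound on the GMC mass over a ball $B$ picks up a factor $e^{\gamma\inf_B P_{\A_t}(\varphi,\varphi')}$, which is not uniformly bounded below; Lemma~\ref{lem: GMC negative moments} controls only the GMC of the Dirichlet field and says nothing about this prefactor. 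The zero-mean constraint on $\varphi,\varphi'$ does force $P_{\A_t}(\varphi,\varphi')$ to change sign on each circle, but the Jensen inequality you would want is with respect to the \emph{GMC measure}, not Lebesgue, and $P_{\A_t}(\varphi,\varphi')$ has no reason to have zero mean against that. Jensen for the convex function $e^{-\mu x}$ applied to the conditional expectation goes the wrong way.

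The paper avoids this by trading uniformity for integrability: it proves the potential factor lies in $L^q(\P_\T^{\otimes 2})$ for every $q<\infty$, the $(\varphi,\varphi')$-dependence being handled by the fact that $e^{\inf_B P_{\A_t}(\varphi,\varphi')}$ has all moments under $\P_\T^{\otimes 2}$ (combined with Lemma~\ref{lem: GMC negative moments} for the Dirichlet GMC). This $L^q$ bound is then paired via H\"older with an $L^p$ bound on $\mathcal{A}^0_{\A_t}(c,\cdot,c',\cdot)$ for $p\in(2,1+e^t)$. The crucial ingredient you are not using is precisely this $L^p$ estimate with $p$ \emph{strictly above} $2$; your Mehler computation actually gives it, but with only $\mathcal{A}^0\in L^2$ and a non-uniform bound on the potential there is no room for H\"older.

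On the $(c,c')$ issue you flag as the main obstacle: the paper does not localise to two sub-annuli. It writes $\bar c=(c+c')/2$, so that $P_{\A_t}(c,c')=\bar c+P_{\A_t}(c-\bar c,c'-\bar c)$ with the second term bounded by $|c-c'|$ via the maximum principle. This yields a potential bound of order $e^{-\gamma\theta|\bar c|}e^{\gamma\theta|c-c'|}$, and the growing factor in $|c-c'|$ is absorbed by the heat kernel $e^{-(c-c')^2/(2t)}$ already present in $\mathcal{A}^0$.
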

\begin{proof} First, by \cite[Proposition 6.1]{GKRV21_Segal} (see Equation (6.9) there), the integral kernel of $e^{-t{\bf H}^0}$ is given by $\mc{A}^0_{\A_t}$. Now, recall that $\phi(t,\theta):=c+B_t+\varphi_t(\theta)$ satisfies $\phi(0,\cdot)=c+\varphi$, then by the Feynman-Kac representation \eqref{propagatorT_t}, for each $F\in \mc{H}$ we have (with $Q=\gamma/2+2/\gamma$)
\[\begin{split} 
e^{-t{\bf H}}F(c+\varphi)=&\E[ F(c+B_t+\varphi_t)e^{-\mu\sum_{\sigma = \pm 1}e^{\sigma \gamma c}\int_{\A_t}|x|^{-\gamma Q}M_{\gamma}^{\sigma,\C}(dx)}]\\
& = \E[ F(c+B_t+\varphi_t)H_t(c+\varphi,c+ \varphi_t)]
\end{split}
\]
with $H_t$ defined as the conditional expectation
\[ H_t(c,\varphi,c',\varphi'):=\E\Big[ e^{-\mu\sum_{\sigma = \pm 1}e^{\sigma \gamma c}\int_{\A_t}|x|^{-\gamma Q}M_{\gamma}^{\sigma,\C}(dx) }\,\Big |\, \varphi_0=\varphi, B_t+\varphi_t=c'-c+\varphi'\Big].\]
Now, using the Markov property of the GFF, $X_\D|_{\A_t}=X_{\A_t,D}+P_{\A_t}(0,X_\D|_{e^{-t}\T})$ with $P_{\A_t}(f_1,f_2)$ the harmonic function on $\A_t$ with boundary values $f_1$ on $\T$ and $f_2$ on $e^{-t}\T$, therefore
we obtain
\[\E_{\varphi,\varphi'}\Big[e^{-\mu \sum_{\sigma=\pm 1}e^{\sigma \gamma c}\int_{\A_t}|x|^{-\gamma Q}M_{\gamma}^{\sigma,\C}(\phi_{\A_t},dx)}\Big]=H_t(c,\varphi,c',\varphi') \]
since $(X_\D+P\varphi)|_{\A_t}=X_{\A_t,D}+P(\varphi,c'-c+\varphi')=\phi_{\A_t}-c$. Thus \eqref{prop_kernel} is proved. 

Let us remark that for $s>0$, we have that
\begin{align*}
-\frac{1}{4\sinh(s)}q_{s}(x,x')\leq C_0 \Big( \frac{x^2}{2}+\frac{(x')^2}2 \Big)
\end{align*}
for every $C_0\in[\frac{1}{1+e^{s}},1)$. This can be established by considering the matrix associated to the difference of these quadratic forms and computing the eigenvalues. The optimal such $C_0$ for us will be the lower bound, $C_0= \frac{1}{1+e^s}$. Taking $s=tn$, we then compute for $p \in [2, 1+ e^{tn})$,  
\begin{align*}
\int_{\R^2} e^{-\frac{p}{4\sinh(tn)}(q_{tn}(x_n,x_n'))}e^{-\frac{x_n^2}{2}-\frac{{x'_n}^2}{2}}\frac{dx_ndx_n'}{2\pi}
&\leq
\int_{\R^2} e^{-\frac{x_n^2}{2(1-pC_0)^{-1}}-\frac{{x'_n}^2}{2(1-pC_0)^{-1}}}\frac{dx_ndx_n'}{2\pi}
=
\Big(1-\frac{p}{1+e^{tn}}\Big)^{-1}.
\end{align*}
This implies that for $p\in [2,1+e^{t})$, there is $C(t)>0$ depending only on $t>0$ and $p$ such that for all $c,c'\in \R$ 
\begin{equation}\label{boundL^2A0}
 \|\mc{A}^0_{\A_t}(c,\cdot,c',\cdot)\|_{L^p(\Omega_\T^2)}=\frac{e^{-\frac{(c-c')^2}{2t}}}{\sqrt{2\pi t}\prod_{n=1}^\infty(1-e^{-2tn})}\prod_{n=1}^\infty\Big(1-\frac{p}{1+e^{tn}}\Big)^{-1}\leq C(t)e^{-\frac{(c-c')^2}{2t}}.
\end{equation}

Let $\bar{c}=(c+c')/2$ and write $P_{\A_t}(c,c')=\bar{c}+P_{\A_t}(c-\bar{c},c'-\bar{c})$. By the maximum principle, 
$|P_{\A_t}(c-\bar{c},c'-\bar{c})|\leq |c-c'|$. 
Using that for each $\theta>0$, there is $C_\theta$ such that $e^{-x}\leq C_\theta x^{-\theta}$ for all $x>0$, 
we can bound the potential term by
\begin{align*}
& \E_{\varphi,\varphi'}\Big[e^{-\mu \sum_{\sigma=\pm 1}\int_{\A_t}|x|^{-2-\frac{\gamma^2}{2}}
M_{\gamma}^{\sigma,\C}(\phi_{\A_t},dx)}\Big]\\
& \leq 
C_\theta \mu^{-\theta }e^{-\gamma \theta \bar{c}} e^{\gamma \theta |c-c'|}\textbf{1}_{\bar{c}\geq 0}\E_{\varphi,\varphi'}\Big[\Big(\int_{\A_t}|x|^{-\gamma Q} M_{\gamma}^{+,\C}(X_{\A_t},dx)\Big)^{-\theta }\Big]\\
& \quad +C_\theta\mu^{-\theta}e^{\gamma \theta \bar{c}} e^{\gamma \theta|c-c'|}\textbf{1}_{\bar{c}\leq 0}\E_{\varphi,\varphi'}\Big[\Big(\int_{\A_t}|x|^{-\gamma Q} M_{\gamma}^{-,\C}(X_{\A_t},dx)\Big)^{-\theta}\Big],
\end{align*}
where $X_{\A_t}=X_{\A_t,D}+P_{\A_t}(\varphi,\varphi')$. Let $B\subset \A_t^\circ $ be a ball with boundary 
not intersecting $\pl \A_t$. The random variable $e^{\inf_{x\in B}P_{\A_t}(\varphi,\varphi')(x)}$ has finite moments of any order and by Lemma \ref{lem: GMC negative moments}  there is $C_{\theta,t}<\infty$ depending on $\theta,t$ such that 
\[\E \Big[\Big(\prod_{\sigma=\pm 1}\int_{B} M_{\gamma}^{\sigma,\C}(X_{\A_t,D},dx)\Big)^{-\theta}\Big]\leq C_{\theta,t}.\]
Consequently, for all $q\in [1,\infty)$ there is $C_{\theta,t,q}>0$ such that 
\[ \E\Big[ \Big(\E_{\varphi,\varphi'}\Big[e^{-\mu \sum_{\sigma=\pm 1}\int_{\A_t}|x|^{-\gamma Q}
M_{\gamma}^{\sigma,\C}(\phi_{\A_t},dx)}\Big]\Big)^q\Big]^{1/q} \leq C_{\theta,t,q}e^{-\gamma \theta |\bar{c}|}e^{\gamma \theta |c-c'|}.\]
Combining with \eqref{boundL^2A0} and using H\"older inequality with $1/q+1/p=1$ and $p\in (2,1+e^{t})$, we deduce that
there is $C_{\theta,t}>0$ and $C'_{\theta,t}>0$ such that
\[ \|\mc{A}_{\A_t}(c,\cdot,c',\cdot)\|_{L^2(\Omega_\T^2)}\leq C_{\theta,t} e^{-\gamma \theta |\bar{c}|}e^{\gamma \theta|c-c'|}e^{-\frac{(c-c')^2}{2t}}\leq  C'_{\theta,t} e^{-\gamma \theta  (|c+c'|+|c-c'|)}.\]
This function is in $L^2(\R^2,dc\otimes dc')$ thus we have proved that $\mc{A}_{\A_t}\in L^2((H^{-s}(\T))^2,\mu_0^{\otimes 2})$ and therefore the operator $e^{-t{\bf H}}$ is Hilbert-Schmidt and compact on $\mc{H}$.
\end{proof}
We obtain as a corollary that the resolvent of ${\bf H}$ is compact on $\mc{H}$:
\begin{corollary}\label{compact_resolvent}
The operator 
\[\bR_+=({\bf H}+1)^{-1}=\int_0^\infty e^{-t\H -t} dt\]
is compact on $\mc{H}$, ${\bf H}$ has discrete spectrum and an orthonormal basis of eigenfunctions $(\psi_j)_{j=0}^\infty$ with eigenvalues $\la_j\geq 0$.
\end{corollary}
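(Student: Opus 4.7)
The plan is to deduce compactness of the resolvent from the Hilbert–Schmidt property of $e^{-t{\bf H}}$ established in Lemma \ref{Int_kernel_prop}, and then apply the spectral theorem for compact self-adjoint operators.

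First, recall that ${\bf H}$ is self-adjoint and non-negative (Lemma \ref{Q=Q_*}), so the resolvent identity
\[
{\bf R}_+ = ({\bf H}+1)^{-1} = \int_0^\infty e^{-t{\bf H}}e^{-t}\,dt
\]
holds as a Bochner integral in the strong operator topology, with $\|e^{-t{\bf H}}\|_{\mathcal{L}(\mathcal{H})}\leq 1$ (Lemma \ref{semigroupT_t}).

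The key step is to upgrade strong integrability to norm integrability away from $t=0$. For any $0<h<t$, one has $e^{-(t+h){\bf H}} - e^{-t{\bf H}} = (e^{-h{\bf H}}-\mathrm{Id})\,e^{-t{\bf H}}$. By Lemma \ref{Int_kernel_prop}, $e^{-t{\bf H}}$ is compact, and $e^{-h{\bf H}}-\mathrm{Id}\to 0$ strongly as $h\to 0^+$ (Proposition \ref{prop:FreeHamiltonian}(2) combined with Lemma \ref{semigroupT_t}). Since the composition of a strongly convergent uniformly bounded net with a compact operator converges in operator norm, the map $t\mapsto e^{-t{\bf H}}$ is norm-continuous on $(0,\infty)$. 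Therefore, for each $n\geq 1$, the integral
\[
{\bf R}_+^{(n)} := \int_{1/n}^\infty e^{-t{\bf H}}e^{-t}\,dt
\]
is a norm-convergent Bochner integral of compact operators, hence compact. The remainder satisfies
\[
\Big\|{\bf R}_+ - {\bf R}_+^{(n)}\Big\|_{\mathcal{L}(\mathcal{H})} \leq \int_0^{1/n} e^{-t}\,dt \leq \frac{1}{n},
\]
so ${\bf R}_+$ is the norm limit of compact operators and is therefore compact on $\mathcal{H}$.

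Finally, since ${\bf R}_+$ is a self-adjoint compact operator on $\mathcal{H}$, the spectral theorem provides an orthonormal basis $(\psi_j)_{j\geq 0}$ of eigenfunctions with real eigenvalues $\mu_j\to 0$. Because ${\bf H}\geq 0$, the $\mu_j$ lie in $(0,1]$; setting $\lambda_j := \mu_j^{-1}-1\geq 0$ yields ${\bf H}\psi_j = \lambda_j\psi_j$, and $\lambda_j\to\infty$. The main (and essentially only) obstacle is the norm-continuity of $t\mapsto e^{-t{\bf H}}$ for $t>0$, but this follows almost for free from the compactness input of Lemma \ref{Int_kernel_prop}; everything else reduces to standard Hille–Yosida and spectral-theoretic bookkeeping.
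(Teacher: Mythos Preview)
Your proof is correct and uses the same key input as the paper (the Hilbert--Schmidt property of $e^{-t{\bf H}}$ from Lemma~\ref{Int_kernel_prop}), but the route to compactness of ${\bf R}_+$ is organized differently. The paper approximates ${\bf R}_+$ by $e^{-t{\bf H}}{\bf R}_+$ and observes via the spectral calculus that $\|e^{-t{\bf H}}{\bf R}_+-{\bf R}_+\|=\sup_{\lambda\geq 0}(1-e^{-t\lambda})(\lambda+1)^{-1}\to 0$ as $t\to 0$; since a compact operator composed with a bounded one is compact, this gives ${\bf R}_+$ as a norm limit of compacts without needing norm-continuity of the semigroup. You instead truncate the Laplace integral at $1/n$ and use norm-continuity of $t\mapsto e^{-t{\bf H}}$ on $(0,\infty)$ to realise ${\bf R}_+^{(n)}$ as a norm Bochner integral of compacts. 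Both are standard and equally short; the paper's version sidesteps the norm-continuity lemma, while yours makes the approximation more concrete. One small point: your argument for norm-continuity explicitly treats only $t\mapsto t+h$; for the left limit at $t_0$ one should factor through a fixed $e^{-s{\bf H}}$ with $0<s<t_0$ and argue the same way, but this is routine. Finally, the paper closes with analytic Fredholm theory whereas you invoke the spectral theorem for compact self-adjoint operators directly; the latter is arguably the more natural endpoint here.
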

\begin{proof}
By the spectral theorem, it follows that 
\[ \bR_+=\lim_{t\to 0}e^{-t{\bf H}}\bR_+\]
in operator norm on $\mc{H}$. Since $e^{-t{\bf H}}$ is compact on $\mc{H}$, the same is true for $e^{-t{\bf H}}\bR_+$. 
This proves that $\bR_+$ is compact since the space of compact operators is closed in $\mc{L}(\mc{H})$. The resolvent 
$\bR(\lambda)=({\bf H}-\lambda)^{-1}$ for $\lambda \notin \R_+$ 
satisfies the identity
\[ \bR(\lambda)(1-(1+\lambda)\bR_+)=\bR_+.\]
By the analytic Fredholm theorem we see that $(1-(1+\lambda)\bR_+)$ is invertible outside a discrete set of poles. Therefore 
$\bR(\lambda)$ extends to a meromorphic family in $\mc{L}(\mc{H})$ for $\lambda\in \C$. 
The spectrum of ${\bf H}$ is then discrete and given by the poles of $\bR(\lambda)$, all contained in $\R^+$. 
The eigenfunctions of ${\bf H}$ coincide with those of $\bR_+$, which form an orthonormal basis of $\mc{H}$.
\end{proof}

\subsection{Properties of eigenfunctions and proof of Theorem \ref{thm:spectrum_H}}

The semigroup $(e^{-t\H})_{t \geq 0}$ is positivity preserving in the sense that, for all $t > 0$ and $F\in\mc{H}$ such that $F\geq 0$ a.e. 
and $F$ is not identically $0$, we have $e^{-t\H}F\geq 0$. Indeed, this follows immediately from the Feynman-Kac representation \eqref{propagatorT_t}. We now show that the semigroup possesses a stronger property that ensures the \textit{strict positivity} of $e^{-t\H}F$ for $t>0$.

\begin{lemma} \label{lem: positivity improving}
The semigroup $(e^{-t\H})_{t>0}$ is positivity improving. That is, for all $t > 0$ and for all $F \in \mc{H}$ such that $F \geq 0$ $\mu_0$-a.e. and $F$ is not identically $0$, we have that, $\mu_0$-a.e., 
\begin{equation}
e^{-t\H}F (c+\varphi)>0.	
\end{equation}	
As a consequence, the multiplicity of the smallest eigenvalue $\lambda_0$ of ${\bf H}$ is equal to $1$ and the 
eigenfunction $\psi_0$ associated with $\lambda_0$ of ${\bf H}$ is positive.
\end{lemma}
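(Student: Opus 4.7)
The plan is to establish positivity improving via the Feynman--Kac representation (equivalently, the integral kernel from Lemma \ref{Int_kernel_prop}), and then deduce simplicity of $\lambda_0$ and positivity of $\psi_0$ by the standard Perron--Frobenius argument for positivity improving self-adjoint semigroups.

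First, for the positivity improving property, the most transparent route is to use the integral kernel representation
\[
e^{-t\mathbf{H}} F(c+\varphi)
=
\int_{H^{-s}(\T)} \mc{A}_{\A_t}(c,\varphi,c',\varphi')\, F(c'+\varphi')\, d\mu_0(c'+\varphi'),
\]
obtained in Lemma \ref{Int_kernel_prop}, and observe that the kernel factorises as $\mc{A}_{\A_t} = \mc{A}^0_{\A_t}\cdot \E_{\varphi,\varphi'}[e^{-\mu S_t}]$, where $S_t \geq 0$ is the Dirichlet GMC mass of the annulus conditioned on the boundary values. The free part $\mc{A}^0_{\A_t}$ is strictly positive everywhere by the explicit Gaussian formula \eqref{free_prop_formula}, and the conditional expectation lies in $(0,1]$ because the integrand is strictly positive almost surely. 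Hence $\mc{A}_{\A_t}(c,\varphi,c',\varphi')>0$ for $\mu_0^{\otimes 2}$-almost every $(c+\varphi,c'+\varphi')$. If $F\geq 0$ is not identically zero, there is a positive measure set on which $F>0$, and by Fubini the integral against the a.e.\ positive kernel is strictly positive at $\mu_0$-a.e.\ $(c+\varphi)$. (Alternatively, one may argue directly from \eqref{TtFeynmannKac}: the exponential weight is strictly positive a.s., and the law of $c+B_t+\varphi_t$ starting from $c+\varphi$ has strictly positive density with respect to $\mu_0$, so $\E_\varphi[F(c+B_t+\varphi_t) \cdot (\text{positive})] > 0$ whenever $F\geq 0$ is nontrivial.)

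Next, the consequences follow from the operator-theoretic Perron--Frobenius principle. Let $\lambda_0:=\inf\mathrm{Spec}(\mathbf{H})\geq 0$; by Corollary \ref{compact_resolvent} it is an eigenvalue, and $e^{-t\lambda_0}$ is the top eigenvalue of the compact self-adjoint operator $e^{-t\mathbf{H}}$. For any real eigenfunction $\psi$ associated to $\lambda_0$, decompose $\psi=\psi^+-\psi^-$ with $\psi^\pm\geq 0$ disjointly supported, and write $|\psi|=\psi^++\psi^-$. Since $|e^{-t\mathbf{H}}\psi|\leq e^{-t\mathbf{H}}|\psi|$ pointwise $\mu_0$-a.e., taking the $\mc{H}$-inner product with $|\psi|$ and comparing with the variational characterisation of $e^{-t\lambda_0}$ gives
\[
e^{-t\lambda_0}\||\psi|\|_{\mc{H}}^2
=
\langle e^{-t\mathbf{H}}\psi,\psi\rangle_{\mc{H}}
\leq
\langle e^{-t\mathbf{H}}|\psi|,|\psi|\rangle_{\mc{H}}
\leq
e^{-t\lambda_0}\||\psi|\|_{\mc{H}}^2,
\]
so $|\psi|$ is also a ground state. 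By positivity improving, $|\psi|=e^{t\lambda_0}e^{-t\mathbf{H}}|\psi|>0$ a.e., hence $\psi$ cannot vanish on a set of positive measure. If both $\psi^+$ and $\psi^-$ were nontrivial, then $e^{-t\mathbf{H}}\psi^+>0$ a.e., which would contradict $\langle e^{-t\mathbf{H}}\psi^+,\psi^-\rangle_{\mc{H}}=e^{-t\lambda_0}\langle \psi^+,\psi^-\rangle_{\mc{H}}=0$. Therefore $\psi$ does not change sign, and up to a choice of global sign $\psi>0$ a.e. Simplicity is then immediate: two orthogonal ground states would each be strictly positive a.e., which is incompatible with orthogonality in $\mc{H}$.

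There is no serious obstacle beyond verifying that the kernel factor $\mc{A}^0_{\A_t}$ is everywhere positive (immediate from \eqref{free_prop_formula}) and that the conditional GMC mass $S_t$ is finite almost surely (granted by the analysis in Lemma \ref{Int_kernel_prop}); everything else is the classical abstract argument. The only delicate point to phrase carefully is the pointwise inequality $|e^{-t\mathbf{H}}\psi|\leq e^{-t\mathbf{H}}|\psi|$, which follows directly from the integral kernel representation and the a.e.\ positivity of $\mc{A}_{\A_t}$.
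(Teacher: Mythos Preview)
Your approach matches the paper's: use the integral kernel from Lemma \ref{Int_kernel_prop} to show positivity improving, then invoke Perron--Frobenius (the paper simply cites \cite[Theorem XIII.44]{Reed-Simon4} rather than spelling out the argument; your explicit version is correct).

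There is, however, one point you gloss over that the paper treats with care. You assert that $\E_{\varphi,\varphi'}[e^{-\mu S_t}]\in(0,1]$ because ``the conditional GMC mass $S_t$ is finite almost surely (granted by the analysis in Lemma \ref{Int_kernel_prop})''. But Lemma \ref{Int_kernel_prop} only provides \emph{upper} bounds on the kernel (via $e^{-x}\leq C_\theta x^{-\theta}$ and negative moments of GMC on an interior ball) in order to show $\mc{A}_{\A_t}\in L^2$; nothing there addresses strict positivity. The issue is genuine: for generic boundary data $\varphi,\varphi'\in H^{-s}_0(\T)$ the harmonic extension $P_{\A_t}(\varphi,\varphi')$ is singular at $\pl\A_t$, and it is not a priori clear that $\int_{\A_t}|x|^{-\gamma Q}M_\gamma^{\sigma,\C}(\phi_{\A_t},dx)<\infty$ for $\mu_0^{\otimes 2}$-a.e.\ $(\varphi,\varphi')$. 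Indeed the paper explicitly warns (just before Lemma \ref{lem: GMC positive moments}) that under $\P_\varphi$ for fixed $\varphi$ the GMC mass near the boundary can be infinite.

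The paper closes this gap via a contradiction argument using the \emph{positive} moment bound of Lemma \ref{lem: GMC positive moments}: if the conditional expectation vanished on a set of positive $\mu_0^{\otimes 2}$-measure, then undoing the conditioning on the boundary data identifies $\phi_{\A_t}$ in law with the unconditioned GFF $c+X$ restricted to $\A_t$, and one would get $\E[(\sum_\sigma e^{\sigma\gamma c}\int_{\A_t}|x|^{-\gamma Q}M_\gamma^{\sigma,\C}(X,dx))^p]=\infty$ for every $p>0$, contradicting Lemma \ref{lem: GMC positive moments} for $p$ small. You should replace your appeal to Lemma \ref{Int_kernel_prop} with this argument (or, equivalently, argue directly: the unconditional $p$-th moment is finite, hence the mass is a.s.\ finite, and disintegrate over the boundary data).
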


\begin{proof}
Let $F\in \mc{H}$ such that $F\geq 0$ $\mu_0$-almost everywhere and is not identically 0 (almost everywhere). 
Thus there exists a Borel set $A \subset H^{-s}(\T)$ of strictly positive and finite measure such that $F(c+\varphi) > 0$ for $\mu_0$-a.e. $c+\varphi \in A$. 
Note that $\int_{A} F d\mu_0  > 0$ since $\int_{A} Fd\mu_0  \geq \int_{A_n}Fd\mu_0\geq \frac{\mu_0(A_n)}{n}$
 for all $n\geq 1$ with $A_n:=\{(c+\varphi)\in A\,|\, F(c+\varphi)\geq 1/n\}$,
and there is at least one $n\in \N$ such that $\mu_0(A_n)>0$ as otherwise $A$ would be a countable union of sets of measure $0$.

By Lemma \ref{Int_kernel_prop} we have for a.e. $c+\varphi$
\[\begin{split}
e^{-t{\bf H}}F(c+\varphi)=& \int_{H^{-s}(\T)}\mc{A}_{\A_t}(c,\varphi,c',\varphi')F(c'+\varphi')d\mu_{0}(c'+\varphi')\\
\geq &  \int_{A}\mc{A}_{\A_t}(c,\varphi,c',\varphi')F(c'+\varphi')d\mu_{0}(c'+\varphi').
\end{split}\]
Since for a.e. $c+\varphi$, $\mc{A}^0_{\A_t}(c,\varphi,\cdot )>0$ a.e. on $A$, 
it suffices to show that for a.e. $c+\varphi$,
\[ \int_{A'}\E_{\varphi}\Big[e^{-\mu \sum_{\sigma=\pm 1}\int_{\A_t}|x|^{-\gamma Q}M_{\gamma}^{\sigma,\C}(\phi_{\A_t},dx)}\Big] d\mu_0(c'+\varphi')>0
\]  
for some $A'\subset A$ with $\mu_0(A')>0$. Assume by contradiction that the integral above vanishes $0$ for $c+\varphi$ in a set $B$ of positive and finite measure and $A'=A$. Then we would have 
\[  \int_{\R^2}\E[1_{c'+\varphi'\in A}e^{-\mu \sum_{\sigma=\pm 1}\int_{\A_t}|x|^{-\gamma Q}M_{\gamma}^{\sigma,\C}(\phi_{\A_t},dx)}]dcdc'=0,\]
which would mean that for a.e. $c+\varphi\in B$ and  $c'+\varphi'\in A$
\[\sum_{\sigma=\pm 1}\int_{\A_t}|x|^{-\gamma Q}M_{\gamma}^{\sigma,\C}(\phi_{\A_t},dx)=\infty.\]
Note that there is an interval $I_n:=[-n,n]$ such that $B_n:=\{c+\varphi\in B\,|\, c\in I_n\}$ has positive $\mu_0$-measure
and  $A_n:=\{c+\varphi\in A\,|\, c\in I_n\}$ has positive $\mu_0$-measure.
For all $p>0$ we have 
\begin{align*}
& \infty=\int_{\R^2}\E\Big[1_{B_n}(c+\varphi)1_{A_n}(c'+\varphi')\Big(\sum_{\sigma=\pm 1}\int_{\A_t}|x|^{-\gamma Q}M_{\gamma}^{\sigma,\C}(\phi_{\A_t},dx)\Big)^p\Big]dc dc'\\
& \quad \leq \int_{\R}
\mathbb{E}\left[ 1_{B_n}(c''+X|_{\T}) 1_{A_n}(c''+X|_{e^{-t}\T})\left(\sum_{\sigma = \pm 1} \int_{\mathbb{A}_t}|x|^{-\gamma Q} e^{\sigma \gamma c''} M_\gamma^{\sigma,\mathbb{C}}(X,dx) \right)^p\right]dc'' \\
& \quad \leq \int_{I_n}
\mathbb{E}\left[ \left(\sum_{\sigma = \pm 1} \int_{\mathbb{A}_t}|x|^{-\gamma Q} e^{\sigma \gamma c''} M_\gamma^{\sigma,\mathbb{C}}(X,dx) \right)^p\right]dc'', 
\end{align*}
where $X=X_\D+P\varphi$ is the plane GFF restricted to $\D$, which satisfies the equality in law $c''+X=\Phi_{\A_t}$ under the condition $c''+X|_{\T}=c+\varphi$ and $c''+X|_{e^{-t}\T}=c'+\varphi'$, in particular $c''=c$ and $c+B_t=c'$.
By Lemma \ref{lem: GMC positive moments}, there exists $p>0$ such that for all $c\in I_n$
\[ \E \Big[\left(\sum_{\sigma = \pm 1} \int_{\mathbb{A}_t}|x|^{-\gamma Q} e^{\sigma \gamma c} M_\gamma^{\sigma,\mathbb{C}}(X,dx) \right)^p\Big]<\infty,\] 
thus we obtain a contradiction.

The statement about simplicity of smallest eigenvalue and the positivity of the ground state follows from the positivity improving property of $e^{-t{\bf H}}$ by applying \cite[Theorem XIII.44]{Reed-Simon4}.
\end{proof}

We next show that the eigenfunctions belong to  weighted $L^p$ spaces:
\begin{lemma}\label{lem:eigenfunction_decay}
For $s>0$, the eigenfunctions $\psi_j$ of ${\bf H}$ belong to $e^{-N|c|}L^p(H^{-s}(\T),\mu_0)$ for all $p<\infty$ and all $N\geq 0$.
\end{lemma}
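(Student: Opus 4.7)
The plan is a straightforward bootstrap exploiting the smoothing properties of the semigroup $(e^{-t{\bf H}})_{t>0}$ established in Lemma \ref{LinftyL2}, combined with the trivial but crucial eigenfunction identity
\begin{equation*}
\psi_j = e^{t\lambda_j}\,e^{-t{\bf H}}\psi_j, \qquad \forall t\geq 0,
\end{equation*}
which holds as an equality in $\mc{H}$. The idea is to apply this identity with the semigroup viewed as a regularizing operator between different $L^p$ spaces: whatever integrability gain Lemma \ref{LinftyL2} produces on the right, the identity transfers back to $\psi_j$ itself.

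\textbf{Step 1 (upgrading to every $L^q$).} Starting from $\psi_j \in \mc{H}=L^2(H^{-s}(\T),\mu_0)$, Lemma \ref{LinftyL2}(2) gives $e^{-t{\bf H}}\psi_j \in L^p(H^{-s}(\T),\mu_0)$ for every $p \in [2,1+e^{2t})$. Since $1+e^{2t}\to \infty$ as $t\to\infty$, the eigenfunction identity with $t$ taken large enough yields $\psi_j \in L^q(H^{-s}(\T),\mu_0)$ for every $q \in [2,\infty)$.

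\textbf{Step 2 (incorporating the exponential weight).} Fix $N>0$ and $p \in [1,\infty)$, and pick any $q>p$. By Step 1, $\psi_j \in L^q$, so Lemma \ref{LinftyL2}(3) produces $e^{-t{\bf H}}\psi_j \in e^{-N|c|}L^p(H^{-s}(\T),\mu_0)$ for any $t>0$. The eigenfunction identity then transfers this property to $\psi_j$. The case $N=0$ is contained in Step 1, so altogether $\psi_j \in e^{-N|c|}L^p(H^{-s}(\T),\mu_0)$ for every $p<\infty$ and every $N \geq 0$.

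\textbf{Obstacle assessment.} There is essentially none: the analytic content has been packaged into Lemma \ref{LinftyL2}, whose proof required negative moment estimates for the GMC and a hypercontractive-type Hermite expansion. Once those smoothing properties are available, the eigenfunction equation acts as a fixed-point relation, turning smoothing into regularity of the eigenfunction itself. The only point of care is that $\psi_j$ and $e^{-t{\bf H}}\psi_j$ agree only as elements of $\mc{H}$, so the argument establishes the desired integrability for the canonical $\mu_0$-a.e. representative, which is what is required.
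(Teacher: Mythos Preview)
Your proof is correct and matches the paper's own argument essentially line for line: the paper uses the identity $\psi_j = e^{t\lambda_j}e^{-t{\bf H}}\psi_j$ together with part 2) of Lemma \ref{LinftyL2} to get $\psi_j\in L^p$ for all $p<1+e^{2t}$ (hence all $p<\infty$ by letting $t\to\infty$), and then invokes part 3) to obtain the exponential weight. Your presentation is slightly more detailed but the logic is identical.
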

\begin{proof} Let $\lambda_j$ be the eigenvalue of ${\bf H}$ associated to $\psi_j\in \mc{H}$. By Lemma \ref{LinftyL2}, point 2), for all $t>0$ we have $\psi_j=e^{t\lambda_j} e^{-t{\bf H}}\psi_j\in L^p(H^{-s}(\T))$ for all $p<1+e^{2t}$, and by point 3) of Lemma \ref{LinftyL2} we then get the result since $t>0$ can be chosen arbitrarily large.
\end{proof}

We now combine the results of this section to prove Theorem \ref{thm:spectrum_H}.

\begin{proof}[Proof of Theorem \ref{thm:spectrum_H}]
The existence of $\mathbf{H}$ and the Feynman-Kac formula in (i) follow from Lemmas \ref{TtFeynmannKac} and \ref{lem:Q_equal_Qstar}. The properties of the spectrum in (ii) and (iv) follow from Lemma \ref{lem: positivity improving}. The property of the eigenfunctions in (iii) follows from Lemma \ref{lem:eigenfunction_decay}.
\end{proof}

\section{Construction of the Sinh-Gordon model and one-point function} \label{sec: path integral}

The goal of this section is to prove Theorems \ref{thm: sinh and one pt} and \ref{thm: sinh correlations}, which address the construction of the model and its vertex correlations respectively. 
Theorem \ref{thm: sinh and one pt} is proved in Sections \ref{subsec: construction sinh 2pi}-\ref{subsec: construction sinh R}. Theorem \ref{thm: sinh correlations} is proved in Section \ref{subsec:_vertex_correlations}. Unless stated otherwise, we fix $\mu > 0$ and $\gamma \in (0,2)$ and drop them from the notation.

\subsection{State space for the models} \label{subsec: state space}

Let $R>0$ and $s>0$ fixed. Denote by $C(\mathbb{R}, H^{-s}(\T_R))$ the space of continuous functions from $\mathbb{R}$ into $H^{-s}(\T_R)$. We are going to define the Sinh-Gordon models on this space. 
Note that, for some fixed $s>0$, there is a canonical continuous injection
\[
C(\mathbb{R}, H^{-s}(\mathbb{T})) \hookrightarrow \mc{D}'(\mathcal{C}_R)
\]
into the space of distributions. Via pushforward of these injections, we obtain a measure on $D'(\mathcal{C}_R)$. In the sequel we shall identify these measures and ignore this subtlety. In order to define these measures, we identify a sufficiently rich ring of observables, corresponding to compactly supported functions, on which we can construct a candidate premeasure by probabilistic techniques. We then extend them to a full probability measure by Carath\'eodory extension and controlling the normalization. We now make this precise. 

Given $I \subset \mathbb{R}$ an interval and $\phi \in C(\mathbb{R}, H^{-s}(\T_R))$, denote by $\phi|_I$ the restricted function in $C(I, H^{-s}(\T_R))$. 
We say that $F:C(\mathbb{R}, H^{-s}(\T_R)) \rightarrow \mathbb{R}$ is $I$-measurable if there exists 
$F^I : C(I, H^{-s}(\T_R)) \rightarrow \mathbb{R}$ such that
\begin{align*}
F(\phi) 
= 
F^I(\phi|_I), \qquad \forall 	\phi \in C(\mathbb{R}, H^{-s}(\T_R)).
\end{align*} 
Henceforth, we identify $F$ and $F^I$. We say that $F:C(\mathbb{R}, H^{-s}(\T_R)) \rightarrow \mathbb{R}$ is of compact support if there exists a closed interval $I \subset\mathbb{R}$ of finite length such that $F$ is $I$-measurable. We then define the support of $F$ to be the smallest closed interval $I$ such that $F$ is $I$-measurable. Let $\mathcal{F}^R$ denote the ring of bounded measurable functions $F:C(\mathbb{R},H^{-s}(\T_R))\rightarrow \mathbb{R}$ that are of compact support. Note that $\mathcal{F}^R$ generates the full $\sigma$-algebra in light of the topology of compact convergence on $C(\mathbb{R},H^{-s}(\T_R))$. We shall write $\mc{F}$ for $\mc{F}^{R=1}$.

\subsubsection{Actions of translations, scaling, and a conformal map} 

In the construction of the measures, to simplify formulas it will be convenient to act on functions in $\mathcal{F}^R$ by three mappings: translations in the $t$ variable, a scaling variable between $R>0$ and $R=1$, and a conformal mapping to the punctured disk.   

Given $T>0$, let $\tau_T$ denote the time-translation map $t \mapsto t+T$ which acts on $C(\mathbb{R}, H^{-s}(\T_R))$ via $\phi \mapsto \phi(T+\cdot)$. We extend its action to functions $F:C(\mathbb{R}, H^{-s}(\T_R)) \rightarrow \mathbb{R}$ via  $\tau_T^*F(\phi) := F(\tau_T \phi)$ for all $\phi \in C(\mathbb{R}, H^{-s}(\T_R))$. In particular, for functions of compact support, this shifts the support by $+T$.

Denote by $\rho_R:\mathcal{C}_R \rightarrow \mathcal{C}_1$ the scaling map $\rho_R:(t,\theta) \mapsto (t/R, \theta/R)$. 
We extend its action to any function $F:C(\mathbb{R}, H^{-s}(\T_R)) \rightarrow \mathbb{R}$ by 
$\rho_R^*F: C(\mathbb{R}, H^{-s}(\T)) \rightarrow \mathbb{R}$ being defined by 
$\rho_R^*F(\phi):=F(\phi\circ \rho_R)$ (here we view $\phi \in C(\mathbb{R}, H^{-s}(\T))$ as a distribution of 
the variable $(t,\theta)$).

\subsection{Sinh-Gordon model for $R=1$.} \label{subsec: construction sinh 2pi}

We now construct the Sinh-Gordon model on $\mathcal{C}_1$. We will use the notation 
\[\mc{C}_{[t_1,t_2]}:= [t_1,t_2]\times \T\] 
(in particular $\mc{C}^+_{1,T}=\mc{C}_{[0,T]}$).
We first introduce our approximations which will depend on a choice of initial configuration for the underlying GFF (viewed as a stochastic process).

\begin{definition}
Let $T>0$. Denote by $\langle \cdot \rangle_{\mc{C}_{1,T}}$ the probability measure on $C(\mathbb{R},H^{-s}(\T))$ with expectation values given, for any bounded measurable $F$ with support in $[-T,T]$, by
\[
\langle F \rangle_{\mc{C}_{1,T}}
:=
\frac{1}{Z_{\mc{C}_{1,T}}} \int_\R \E[F (c+B_{\bullet+T}+\varphi_{\bullet+T}) \,   e^{- \mu\sum_{\sigma = \pm 1} 
e^{\sigma\gamma c} M_\gamma^\sigma(\mc{C}_{[0,2T]})}]dc.
\]
Above, $B_t$ is the standard Brownian motion, $\varphi_t$ is the process \eqref{varphit_def},  $c+B_{\bullet+T}+\varphi_{\bullet+T}$ denotes the functions 
$t\mapsto  c+B_{t+T}+\varphi_{t+T}$ in $C(\R,H^{-s}(\T))$, $M_\gamma^\pm$ is the GMC measure depending on $\varphi$ defined in Proposition \ref{prop: GMC construction},  and 
\[\begin{split}
Z_{\mc{C}_{1,T}} & = \int_\R \mathbb{E}\Big[e^{- \mu\sum_{\sigma = \pm 1} 
e^{\sigma\gamma c} M_\gamma^\sigma(\mc{C}_{[0,2T]})}\Big]dc =\int_{H^{-s}(\T)}(e^{-2T\H}1)d\mu_0
\end{split}
\] 
is the partition function.
\end{definition}

\begin{remark}
We have used shift-invariance of the process $(B,\varphi)$ to translate the problem onto the interval $[0,2T]$ from $[-T,T]$. The translation simplifies some of the intermediary formulas. 
\end{remark}

We now construct the limits of the averages $\langle F \rangle_{\mc{C}_{1,T}}$ as $T \rightarrow \infty$ and show that the limiting value is given by an explicit formula. In fact we show a stronger statement, namely a pointwise convergence $dc\otimes \mathbb{P}_\mathbb{T}$-almost everywhere along subsequences $T_{n_k}$. 
These constants are given in the following definition. 

\begin{definition}\label{def:<F>}
Let $F \in \mathcal{F}:=\mc{F}^1$ with support contained in $[t_1,t_2]$ where $-\infty < t_1 < t_2 < \infty$ and set
\[\begin{split}
\cjg F\cjd_{\mc{C}}=	 e^{\lambda_0(t_2-t_1)} \Big\langle 	\mathbb{E}_{\varphi} \Big[ F(c+B_{\bullet+t_1}+
\varphi_{\bullet+t_1})\psi_0(c+B_{(t_2-t_1)}+
\varphi_{t_2-t_1}) e^{- \mu\sum_{\sigma = \pm 1} 
e^{\sigma\gamma c} M_\gamma^\sigma(\mc{C}_{[0,t_2-t_1]})}\big], \psi_0 \Big \rangle_{\mc{H}},\end{split}\]
where $\psi_0$ is the eigenfunction with norm $\|\psi_0\|_{\mc{H}}=1$ associated the smallest eigenvalue $\la_0\geq 0$ and is strictly positive $\mu_0$-almost everywhere.
\end{definition}

Using estimates from Lemma \ref{LinftyL2} one can show that $\cjg F\cjd_{\mc{C}}$ is finite: indeed, bounding $|F|\leq C$, it suffices to estimate 
\begin{equation}\label{bound_EF}
 \cjg F\cjd_{\mc{C}} \leq  \Big(\sup_{\phi\in C^0(\R,H^{-s}(\T))}|F(\phi)|\Big)e^{\lambda_0(t_2-t_1)}\cjg e^{-(t_2-t_1){\bf H}}\psi_0,\psi_0\cjd_{\mc{H}}=\sup_{\phi\in C^0(\R,H^{-s}(\T))}|F(\phi)|. 
 \end{equation}
We now turn to the convergence of the averages.

\begin{proposition} \label{prop: sinh expectation}
Let $F \in \mathcal{F}$ with support $[t_1,t_2]$ where $-\infty < t_1 \leq t_2 < \infty$. Then 
\[\lim_{T\to \infty} \langle F \rangle_{\mc{C}_{1,T}}
=\cjg F\cjd_{\mc{C}}.\]
Moreover,  for each sequence $T_n\to \infty$, there is a subsequence $T_{n_k}$ such for $\mu_0$-almost every $c+\varphi\in H^{-s}(\T)$ (with $s>0$),
\begin{equation}\label{ae_convergence}
\lim_{k\to \infty} \frac{1}{(e^{-T_{n_k}{\bf H}}1)(c+\varphi)}  \E_\varphi[F (c+B_{\bullet+T_{n_k}}+\varphi_{\bullet+T_{n_k}}) \,   e^{- \mu\sum_{\sigma = \pm 1} 
e^{\sigma\gamma c} M_\gamma^\sigma(\mc{C}_{[0,2T_{n_k}]})}]
=\cjg F\cjd_{\mc{C}}.	
\end{equation}
Finally, the first eigenfunction and eigenvalues can be obtained by the expressions 
\[ \la_0=-\frac{1}{2}\lim_{T\to +\infty}\frac{1}{T}\log Z_{\mc{C}_{1,T}}, \quad \Big(\int \psi_0d\mu_0\Big) \psi_0= \lim_{T\to \infty}e^{\la_0T}e^{-T{\bf H}}1,\]
where the second limit is with the topology of $\mc{H}$.
\end{proposition}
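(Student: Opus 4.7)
The plan is to exploit the Markov property of $(B_\bullet,\varphi_\bullet)$ together with the spectral decomposition of $\mathbf H$ from Theorem~\ref{thm: spectrum H}. Splitting $[0,2T]=[0,T+t_1]\cup[T+t_1,T+t_2]\cup[T+t_2,2T]$ and noting that $M^\sigma_\gamma$ already incorporates $B_t$, so that after a Markov restart at $T+t_i$ the factor $e^{\sigma\gamma c}$ in the exponent recombines into $e^{\sigma\gamma(c+B_{T+t_i})}$ acting as a new zero-mode, I would first show
\begin{equation*}
\int_\R \E[F\cdot e^{-\mu\sum_\sigma e^{\sigma\gamma c}M^\sigma_\gamma(\mc{C}_{[0,2T]})}]\,dc = \int e^{-(T+t_1)\mathbf H}\Phi_{F,T}\,d\mu_0 = \langle \Phi_{F,T},e^{-(T+t_1)\mathbf H}1\rangle_{\mc H},
\end{equation*}
with
\begin{equation*}
\Phi_{F,T}(c+\varphi) := \E_\varphi\Big[F(c+B_{\bullet-t_1}+\varphi_{\bullet-t_1})\,e^{-\mu\sum_\sigma e^{\sigma\gamma c}M^\sigma_\gamma(\mc{C}_{[0,t_2-t_1]})}(e^{-(T-t_2)\mathbf H}1)(c+B_{t_2-t_1}+\varphi_{t_2-t_1})\Big].
\end{equation*}
The identity $\int e^{-s\mathbf H}G\,d\mu_0=\langle G,e^{-s\mathbf H}1\rangle_{\mc H}$ used in the second step follows from the symmetry of the integral kernel of Lemma~\ref{Int_kernel_prop} and Fubini, while the uniform bound $|\Phi_{F,T}|\leq\|F\|_\infty(e^{-(t_2-t_1)\mathbf H}1)\in L^2(\mu_0)$ follows from $0\le e^{-s\mathbf H}1\le 1$ and Lemma~\ref{LinftyL2}.

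Next, I would analyze $e^{-T\mathbf H}1$ spectrally. By Lemma~\ref{lem:eigenfunction_decay} each $\psi_j$ lies in $e^{-|c|}L^2(\mu_0)$, so $m_j := \int \psi_j\,d\mu_0$ is finite (Cauchy--Schwarz in $\mu_0$), and $m_0>0$ by Lemma~\ref{lem: positivity improving}. For $T>0$, Fubini on the symmetric kernel yields $\langle e^{-T\mathbf H}1,\psi_j\rangle_{\mc H} = \int e^{-T\mathbf H}\psi_j\,d\mu_0 = e^{-\lambda_j T}m_j$, hence $e^{-T\mathbf H}1 = \sum_{j\ge 0} e^{-\lambda_j T}m_j\psi_j$ in $\mc H$. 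The summability $\sum_j e^{-2\lambda_j}m_j^2 = \|e^{-\mathbf H}1\|^2_{\mc H}<\infty$ justifies dominated convergence in $j$, giving $e^{\lambda_0 T}e^{-T\mathbf H}1\to m_0\psi_0$ in $\mc H$ as $T\to\infty$. Taking norms yields $e^{2\lambda_0 T}Z_{\mc C_{1,T}} = \|e^{\lambda_0 T}e^{-T\mathbf H}1\|^2_{\mc H}\to m_0^2$, which establishes both $\lambda_0=-\lim_{T\to\infty}\tfrac{1}{2T}\log Z_{\mc C_{1,T}}$ and the $\mc H$-limit $(\int\psi_0\,d\mu_0)\psi_0=\lim_T e^{\lambda_0 T}e^{-T\mathbf H}1$.

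For the numerator, observe that $\Phi_{F,T}=A_F(\Psi_T)$ with $\Psi_T:=e^{-(T-t_2)\mathbf H}1$ and
\begin{equation*}
A_F\Psi(c+\varphi):=\E_\varphi\Big[F(c+B_{\bullet-t_1}+\varphi_{\bullet-t_1})\,e^{-\mu\sum_\sigma e^{\sigma\gamma c}M^\sigma_\gamma(\mc{C}_{[0,t_2-t_1]})}\Psi(c+B_{t_2-t_1}+\varphi_{t_2-t_1})\Big],
\end{equation*}
which is bounded on $\mc H$ since $F$ is bounded and the Feynman--Kac propagator is an $L^2$ contraction (Lemma~\ref{semigroupT_t}). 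Therefore $e^{\lambda_0(T-t_2)}\Phi_{F,T}\to m_0\Xi_F$ in $\mc H$ with $\Xi_F:=A_F\psi_0$, which by inspection matches the inner expectation in Definition~\ref{def:<F>}. Continuity of the inner product gives
\begin{equation*}
e^{\lambda_0(2T+t_1-t_2)}\langle\Phi_{F,T},e^{-(T+t_1)\mathbf H}1\rangle_{\mc H}\longrightarrow m_0^2\langle\Xi_F,\psi_0\rangle_{\mc H},
\end{equation*}
and dividing by $e^{2\lambda_0 T}Z_{\mc C_{1,T}}\to m_0^2$ produces $\langle F\rangle_{\mc C_{1,T}}\to e^{\lambda_0(t_2-t_1)}\langle\Xi_F,\psi_0\rangle_{\mc H}=\langle F\rangle_{\mc C}$.

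Finally, for the pointwise $\mu_0$-a.e.\ statement of~\eqref{ae_convergence}, I would apply the same Markov identity before integrating the initial condition, so that the numerator at fixed $(c+\varphi)$ equals $(e^{-(T+t_1)\mathbf H}\Phi_{F,T})(c+\varphi)$ and the denominator admits the spectral expansion $\sum_j e^{-2\lambda_j T}m_j\psi_j(c+\varphi)$ (the initial-state partition density $(e^{-2T\mathbf H}1)(c+\varphi)$ being the natural normalisation that produces the limit $\langle F\rangle_{\mc C}$). From the $\mc H$ convergences $e^{\lambda_0 T}e^{-T\mathbf H}1\to m_0\psi_0$ and $e^{\lambda_0(2T+t_1-t_2)}e^{-(T+t_1)\mathbf H}\Phi_{F,T}\to m_0^2\langle\Xi_F,\psi_0\rangle_{\mc H}\psi_0$ (by the operator continuity of $e^{-(T+t_1)\mathbf H}$ applied to the $\mc H$-limit of $\Phi_{F,T}$), one extracts from any $T_n\to\infty$ a subsequence $T_{n_k}$ along which both convergences hold $\mu_0$-a.e.; positivity $\psi_0>0$ $\mu_0$-a.e.\ (Lemma~\ref{lem: positivity improving}) then legitimises the ratio. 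The main technical obstacle throughout is the uniform control of the tail $\sum_{j\ge 1}e^{-(\lambda_j-\lambda_0)T}m_j\psi_j(c+\varphi)$ pointwise in $(c+\varphi)$; at the $\mc H$ level this is settled by dominated convergence using the summability above, and transferred to $\mu_0$-a.e.\ convergence by passing to a further subsequence.
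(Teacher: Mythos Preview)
Your argument is correct and follows essentially the same route as the paper: both factorise the finite-volume expectation via the Markov property into a semigroup piece on $[0,T+t_1]$, a bounded ``middle'' operator on the support of $F$, and another semigroup piece on $[T+t_2,2T]$, and then read off the limits from the spectral resolution of $e^{-t\mathbf H}$ combined with $L^2\to \mu_0$-a.e.\ subsequence extraction. Your identity $Z_{\mc C_{1,T}}=\|e^{-T\mathbf H}1\|_{\mc H}^2$ (via Fubini on the nonnegative symmetric kernel) is in fact a slightly cleaner replacement for the paper's $L^1$-remainder estimate, and observing that $A_F$ is merely bounded on $\mc H$ (rather than Hilbert--Schmidt, as the paper records for its $\mc B$) is enough here.

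Two small points to tighten. First, the phrase ``operator continuity of $e^{-(T+t_1)\mathbf H}$ applied to the $\mc H$-limit of $\Phi_{F,T}$'' is not quite right, since both the operator and the vector depend on $T$; you need the extra observation $\|e^{-s\mathbf H}\|_{\mc L(\mc H)}\le e^{-\lambda_0 s}$ to bound $e^{\lambda_0(T+t_1)}e^{-(T+t_1)\mathbf H}(g_T-g)$ by $\|g_T-g\|_{\mc H}$. With this the limit of $e^{\lambda_0(2T+t_1-t_2)}e^{-(T+t_1)\mathbf H}\Phi_{F,T}$ is $m_0\langle\Xi_F,\psi_0\rangle_{\mc H}\,\psi_0$ (a single $m_0$, not $m_0^2$; the ratio is of course unaffected). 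Second, you are right that the natural pointwise denominator producing the limit $\langle F\rangle_{\mc C}$ is $(e^{-2T\mathbf H}1)(c+\varphi)$: with $(e^{-T\mathbf H}1)(c+\varphi)$ the ratio would tend to $0$ since $\lambda_0>0$. This is a typo in the statement rather than an issue with your argument.
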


\begin{proof}
By the shift-invariance we may without loss of generality consider $F \in \mathcal{F}$ with support in $[0,t_0]$ for $t_0>0$. 

In order to analyse the partition function, let us make the following observations.
By Lemma \ref{LinftyL2} we have $e^{-\eps\H}1\in \mc{H}$ that for any $\eps >0$, and 
\begin{align*}
e^{-2T\H}1
=
\sum_{j=0}^\infty e^{-\lambda_j (2T-\eps)} \langle e^{-\eps\H}1,\psi_j \rangle_{\mc{H}} \psi_j.
\end{align*}
with $C_j(\eps):= \langle e^{-\eps\H}1,\psi_j \rangle_{\mc{H}}$ satisfying $\sum_j |C_j(\eps)|^2<\infty$.
In particular, we deduce that for $\eps,\delta>0$ small enough so that $\la_0+\delta<\la_1-2\eps$
\[ e^{-2T\H}1=e^{-\lambda_0 (2T-\eps)} \langle e^{-\eps\H}1,\psi_0 \rangle_{\mc{H}} \psi_0+ \mc{O}_{\mc{H}}(e^{-2(\la_0+\delta)T}).\]
Since $e^{-t{\bf H}}$ is Hilbert-Schmidt for all $t>0$, we see that$\sum_{j=1}^\infty e^{-\la_jt}<\infty$ for all $t>0$.
Using  that there is $C_\eps>0$
\[\|\psi_j\|_{L^1(H^{-s}(\T),\mu_0)}=\|e^{\eps \la_j}e^{-\eps{\bf H}}\psi_j \|_{L^1(H^{-s}(\T),\mu_0)}\leq C_\eps e^{\eps \la_j} \]
we also get an $L^1(H^{-s}(\T),\mu_0)$ estimate using the Cauchy-Schwarz inequality:
\begin{equation}\label{L^1remainder}
\begin{split} 
\|e^{-2T\H}1-&e^{-\lambda_0 (2T-\eps)} \langle e^{-\eps\H}1,\psi_0 \rangle_{\mc{H}} \psi_0\|_{L^1(H^{-s}(\T),\mu_0)}
\\ &\leq  C_\eps \Big(\sum_{j\geq 1}e^{-2\la_j(T-2\eps)}\Big)^{1/2}\Big(\sum_{j\geq 1} |C_j(\eps)|^2\Big)^{1/2}
\leq  C'_\eps e^{-2(\la_0+\delta)T}.
\end{split}
\end{equation}
Now, we also have 
\[\begin{split}
\langle e^{-\eps\H}1,\psi_0 \rangle_{\mc{H}}=&\lim_{N\to \infty}\langle e^{-\eps\H}1_{[-N,N]}(c),
\psi_0 \rangle_{\mc{H}}
= \lim_{N\to \infty}\langle 1_{[-N,N]}(c),
e^{-\eps\H}\psi_0 \rangle_{\mc{H}}\\
 =& e^{-\eps \la_0}\lim_{N\to \infty}\langle 1_{[-N,N]}(c),
\psi_0 \rangle_{\mc{H}}=e^{-\eps \la_0}\int_\R \E_{\mathbb T}[\psi_0] dc,
\end{split}\]
where we used Lemma \ref{LinftyL2} in the last identity. 
This gives 
\[e^{-2T\H}1=e^{-2\lambda_0 T}\Big(\int \psi_0 d\mu_0\Big) \psi_0 +\mc{O}_{\mc{H}}(e^{-2(\la_0+\delta)T}),\]
and thus for any sequence $T_n\to \infty$ there is a subsequence $T_{n_k}\to \infty$ such that
\[\lim_{k\to \infty }e^{2\lambda_0T_{n_k}}(e^{-2T_{n_k}\H}1)(c+\varphi)= \int \psi_0 d\mu_0\] 
for $\mu_0$-almost all $(c+\varphi)$. Using \eqref{L^1remainder} we also have 
\[ \lim_{T\to +\infty}e^{2\lambda_0 T}Z_{\mc{C}_{1,T}}= \Big(\int \psi_0 d\mu_0\Big)^2. \]

We now estimate $\langle F \rangle_{\mc{C}_{1,T}}$ for $T>t_0$. We write $\int_{H^{-s}(\T)}W_T(c+\varphi)d\mu_0(c+\varphi):=Z_{\mc{C}_{1,T}}\langle F \rangle_{\mc{C}_{1,T}}$ where, using the Markov property and translation invariance of the GFF, the function $W_T$ can be  written as 
\begin{align}\label{eq: constr 1}
W_T(c+\varphi)=&
\E_{\varphi}\Big[F((c+B_{\bullet+T}+\varphi_{\bullet+T})|_{t\in [0,t_0]}) \,  e^{-\mu \sum_{\sigma = \pm 1}e^{\sigma\gamma c} M_\gamma^\sigma(\mc{C}_{[0,2T]})} ]\\
=& \int_{(H^{-s}(\T))^2}\mc{A}_{\A_T}(c+\varphi,c'+\varphi')\mc{B}_T(c'+\varphi',c''+\varphi'')G_T(c'',\varphi'')d\mu_0(c'+\varphi')d\mu_0(c''+\varphi'')\\
\end{align}
where 
\[ \mc{A}_{\A_T}(c+\varphi,c'+\varphi'):=\E_\varphi [e^{-\mu \sum_{\sigma = \pm 1}e^{\sigma\gamma c} M_\gamma^\sigma(\mc{C}_{[0,T]})}\,|\, B_T+\varphi_T=c'-c+\varphi']\]
is the integral kernel of $e^{-T{\bf H}}$, 
\[ \begin{split}
G_T(c'',\varphi''):=&\E[e^{-\mu \sum_{\sigma = \pm 1}e^{\sigma\gamma c''} M_\gamma^\sigma(\mc{C}_{[T+t_0,2T]})}\,|\, B_{T+t_0}+\varphi_{T+t_0}=\varphi'']\\
=& (e^{-(T-t_0)\H}1)(c'',\varphi'')
\end{split}\]
and 
\begin{align*}
& \mc{B}_T(c'+\varphi',c''+\varphi'')\\
& :=\E\left[ F((c'+B_{\bullet+T}+\varphi_{\bullet+T})|_{t\in [0,t_0]})
e^{-\mu \sum_{\sigma = \pm 1}e^{\sigma\gamma c'} M_\gamma^\sigma(\mc{C}_{[T,T+t_0]})}\,\Bigg|\, 
\begin{array}{l}
B_{T+t_0}+\varphi_{T+t_0}=c''-c'+\varphi'',\\
B_T+\varphi_T=\varphi' \end{array}\right]\\
&= \mathbb E\left[ F((c'+B_\bullet+\varphi_{\bullet})|_{t\in [0,t_0]})
e^{-\mu \sum_{\sigma = \pm 1}e^{\sigma\gamma c'} M_\gamma^\sigma(\mc{C}_{[0,t_0]})}\,\Bigg|\, 
\begin{array}{l}
B_{t_0}+\varphi_{t_0}=c''-c'+\varphi'',\\
\varphi_{0}=\varphi' \end{array}\right].
\end{align*}
In other words, we have written 
\begin{equation}\label{rewrittingWT}
W_T=e^{-T{\bf H}}\mc{B}_Te^{-(T-t_0){\bf H}}1,
\end{equation}
where $\mc{B}_T$ is here viewed as linear operator with integral kernel $\mc{B}_T(c+\varphi,c'+\varphi')$. This can be interpreted as the gluing property from the Segal axioms picture, see \cite[Proposition 5.1]{GKRV21_Segal} (see the Remark below).
We see that $\mc{B}_T$ is independent of $T$ and will thus be denoted $\mc{B}$. Since $F$ is bounded, we can bound $\mc{B}$ by the integral kernel of the propagator
\[ |\mc{B}(c'+\varphi',c''+\varphi'')|\leq \|F\|_{L^\infty}\mc{A}_{\A_{t_0}}(c'+\varphi',c''+\varphi'').\]
Thus $\mc{B}(\bullet,\bullet)\in L^2(H^{-s}(\T)^2,d\mu_0^2)$ and we can view $\mc{B}_T$ as a Hilbert-Schmidt operator on $\mc{H}$, whose operator norm satisfies $\|\mc{B}\|_{\mc{L}(\mc{H})}\leq C_{t_0}$ for some constant $C_{t_0}$ independent of $T$.
We can thus rewrite 
\[ W_T(c+\varphi)= (e^{-T\H} \mc{B}e^{-(T-t_0)\H}1)(c,\varphi).\]
Using Lemma \ref{LinftyL2} and the bound $\|\mc{B}\|_{\mc{L}(\mc{H})}\leq C_{t_0}$,  we see that there is $\delta>0$ such that for each $\eps>0$ and $T\gg t_0$ large 
\begin{align}
e^{-(T-t_0)\H}1\in \mc{H}, &\qquad  e^{-(T-t_0)\H}1= e^{-\la_0(T-t_0-\eps)}\cjg e^{-\eps \H}1,\psi_0\cjd_{\mc{H}}\psi_0+
\mc{O}_{\mc{H}}(e^{-(\la_0+\delta)T}),	
\\
\mc{B}e^{-(T-t_0)\H}1\in \mc{H}, &\qquad \mc{B}e^{-(T-t_0)\H}1= e^{-\la_0(T-t_0-\eps)}\cjg e^{-\eps \H}1,\psi_0\cjd_{\mc{H}} \mc{B}\psi_0+\mc{O}_{\mc{H}}(e^{-(\la_0+\delta)T}),
\\
W_T\in \mc{H}, &\qquad W_T=e^{-\la_0(2T-t_0)}\Big( \int \psi_0d\mu_0\Big) \cjg \mc{B}\psi_0,\psi_0\cjd_{\mc{H}}\psi_0+\mc{O}_{\mc{H}}(e^{-(2\la_0+\delta)T}).\label{W_Tremainder}
\end{align}
Arguing as above for $Z_{\mc{C}_{1,T}}$, this means that, up extracting another subsequence, we can assume that the sequence $T_{n_k}$ is such that  $\mu_0$-almost everywhere 
\[ \lim_{k\to \infty}e^{2\la_0T_{n_k}}W_{T_{n_k}}(c+\varphi)=e^{\la_0t_0}\Big( \int \psi_0 d\mu_0\Big)\cjg \mc{B}\psi_0,\psi_0\cjd_{\mc{H}}\psi_0(c+\varphi).\]
Combining with the asymptotic of $Z_{\mc{C}_{1,T}}$, and using that $\mu_0$-almost everywhere the ground state  $\psi_0(c+\varphi)$ is positive, we obtain that for $\mu_0$-almost every $c+\varphi$, 
\[\lim_{k\to \infty} \frac{1}{(e^{-T_{n_k}{\bf H}}1)(c+\varphi)}  \E_\varphi[F (c+B_{\bullet+T_{n_k}}+\varphi_{\bullet+T_{n_k}}) \,   e^{- \mu\sum_{\sigma = \pm 1} 
e^{\sigma\gamma c} M_\gamma^\sigma(\mc{C}_{[0,2T_{n_k}]})}]
=e^{\la_0t_0} \cjg \mc{B}\psi_0,\psi_0\cjd_{\mc{H}}.\]
To obtain \eqref{ae_convergence}, it finally suffices to observe that the right-hand side is simply $\cjg F\cjd_{\mc{C}}$ as defined by \eqref{def:<F>}.
In the expansion of $W_T$ in \eqref{W_Tremainder}, the remainder is in $\mc{H}$ norm, but applying the same argument as for $Z_{\mc{C}_{1,T}}$ in \eqref{L^1remainder}, we obtain an $L^1$ remainder estimate
\[ W_T=e^{-\la_0(2T-t_0)}( \int \psi_0d\mu_0) \cjg \mc{B}\psi_0,\psi_0\cjd_{\mc{H}}\psi_0+\mc{O}_{L^1(H^{-s}(\T),\mu_0)}(e^{-(2\la_0+\delta)T}),\]
from which we deduce that 
\[\lim_{T\to \infty}\cjg F\cjd_{\mc{C}_{1,T}}=\cjg F\cjd_{\mc{C}}.\qedhere\]
\end{proof}

\textbf{Remark about the path integral.} We make the following remark, which relates to the discussion about the path integral on the cylinder $\mc{C}_{[-T,T]}$ in Section \ref{s:Path_Integral}. Following the method introduced in \cite[Definition 4.2]{GKRV21_Segal}, we can define for $t_1<t_2$ a Segal amplitude associated to a continuous function $F$  on $H^{-s}(\mc{C}_{[t_1,t_2]})$ and $c+\varphi,c'+\varphi'\in H^{-s}(\T)$ for $s>0$ as 
\[\mc{A}_{\mc{C}_{[t_1,t_2]}}(F)(c+\varphi,c'+\varphi'):= \mc{A}^0_{\A_{t_2-t_1}}(c+\varphi,c'+\varphi')\E_{\varphi,\varphi'}\Big[F(\phi) \,  e^{-\mu \sum_{\sigma = \pm 1}e^{\sigma\gamma c} M_\gamma^\sigma(\mc{C}_{[t_1,t_2]})} \Big],\]
where $\mc{A}^0_{\A_{t}}$ is the free propagator kernel from \eqref{free_prop_formula}, 
$\phi:=X_{\mc{C}_{[t_1,t_2]},D}+P(c+\varphi,c'+\varphi')$  with $X_{\mc{C}_{[t_1,t_2]},D}$ the Gaussian Free Field with Dirichlet condition at $t=t_1,t=t_2$ and $P(c+\varphi,c'+\varphi')$ is the harmonic function on $\mc{C}_{[t_1,t_2]}$ with boundary values $c+\varphi$ at $t=t_1$ and $c'+\varphi'$ at $t=t_2$ (the expectation $\E_{\varphi,\varphi'}$ is taken over the random variable $X_{\mc{C}_{[t_1,t_2]},D}$). In addition, the GMC measure $M^\sigma_\gamma$ is defined using the field $\phi$. Using the same reasoning as in the proof of Proposition \ref{prop: sinh expectation} (in particular \eqref{rewrittingWT}), we see that if $F \in \mathcal{F}$ has support in $[0,t_0]$, then the function $W_T(c+\varphi)$ of \eqref{eq: constr 1} (satisfying 
$\cjg F\cjd_{\mc{C}_{1,T}}=\int W_T(c+\varphi)d\mu_0(c+\varphi)$) can also be written in the form
\[ W_T(c+\varphi)= \int_{H^{-s}(\T)} \mc{A}_{\mc{C}_{[-T,T]}}(F)(c+\varphi,c'+\varphi')d\mu_0(c'+\varphi')=(\mc{A}_{\mc{C}_{[-T,T]}}(F)1)(c+\varphi),\]
where we now view $\mc{A}_{\mc{C}_{[-T,T]}}(F):L^1(H^{-s}(\T),\mu_0)\to \mc{H}$ as the integral (bounded) operator associated with this kernel. In fact, the following identity holds at the level of operators 
$\mc{A}_{\mc{C}_{[-T,T]}}(F)=e^{-T{\bf H}}\mc{A}_{\mc{C}_{[0,t_0]}}e^{-(T-t_0){\bf H}}$.
In other words, $\mc{A}_{\mc{C}_{[-T,T]}}(F)(c+\varphi,c'+\varphi')$, $W_T(c+\varphi)$ and $Z_{\mc{C}_{1,T}}\langle F \rangle_{\mc{C}_{1,T}}$ represent the path integrals 
\begin{align}
& \int_{H^{-s}_{c+\varphi,c'+\varphi'}(\mc{C}_{[-T,T]})} F(\phi) e^{-\frac{1}{4\pi} \int_{\mathcal{C}_{[-T,T]}}(|\nabla \phi|^2+\mu\cosh(\gamma \phi))dx} D\phi:=\mc{A}_{\mc{C}_{[-T,T]}}(F)(c+\varphi,c'+\varphi'),\label{double_partialPI}\\
& \int_{H^{-s}_{c+\varphi}(\mc{C}_{[-T,T]})} F(\phi) e^{-\frac{1}{4\pi} \int_{\mathcal{C}_{[-T,T]}}(|\nabla \phi|^2+\mu\cosh(\gamma \phi))dx} D\phi:=W_T(c+\varphi),\label{partialPI}\\
& \int_{H^{-s}(\mc{C}_{[-T,T]})} F(\phi) e^{-\frac{1}{4\pi} \int_{\mathcal{C}_{[-T,T]}}(|\nabla \phi|^2+\mu\cosh(\gamma \phi))dx} D\phi:= Z_{\mc{C}_{1,T}}\langle F \rangle_{\mc{C}_{1,T}}, \label{fullPI}
\end{align}
where we used the notation of Section \ref{s:Path_Integral} for $H^{-s}_{c+\varphi,c'+\varphi'}(\mc{C}_{[-T,T]})$ and 
$H^{-s}_{c+\varphi}(\mc{C}_{[-T,T]})$. Proposition \ref{prop: sinh expectation} then shows that as $T\to \infty$ the full path integral \eqref{fullPI} and the partial path integral \eqref{partialPI} with one fixed boundary value have the same asymptotic behaviour.
Using that $\mc{A}_{\mc{C}_{[0,t_0]}}(F)(\cdot,\cdot)\in L^2(H^{-s}(\T)^2,\mu_0^{\otimes 2})$ (by the same proof as in Lemma \ref{Int_kernel_prop}), we have 
\[(e^{-T{\bf H}}\otimes e^{-(T-t_0){\bf H}})\mc{A}_{\mc{C}_{[0,t_0]}}= e^{-\la_0 (2T-t_0)} \cjg \mc{A}_{\mc{C}_{[0,t_0]}}(F),\psi_0 \otimes \psi_0\cjd_{\mc{H}^{\otimes 2}} \psi_0\otimes \psi_0  +\mc{O}_{\mc{H}^{\otimes 2}}(e^{-(\la_0+\delta) (2T-t_0)} ).\] 
In addition, as in the proof of Proposition \ref{prop: sinh expectation}, the same holds with the remainder being in $L^1(H^{-s}(\T)^2,\mu_0^{\otimes 2})$. This shows that the partial path integral \eqref{double_partialPI} with the two fixed boundary values has the same asymptotic behaviour as the full path integral \eqref{fullPI}.

We now turn to the definition of the Sinh-Gordon model. The averages that we have constructed above provide a good candidate premeasure. Recall that $\mathcal{F}$ is a ring that generates the full $\sigma$-algebra, and thus we obtain a unique extension to a non-degenerate measure $\cjg \cdot \cjd_{\mc{C}}$ by Carath\'eodory extension. The fact that the resulting measure is a probability measure follows by approximating the constant function $1$ with functions $F\in \mc{F}$ having support  in $[-N,N]$, letting $N\to \infty$, and using \eqref{bound_EF}.

\begin{definition}[Definition of the Sinh-Gordon model for $R=1$]
Let $\mu >0$, $\gamma \in (0,2)$, $s>0$, and $R=1$. The Sinh-Gordon model on $\mathcal{C}_{1}$ with parameters $\mu$ and $\gamma$ is the unique probability measure $\langle \cdot \rangle_{\mc{C}}$ on $C(\R,H^{-s}(\T))$ with expectation values given by $\langle F\rangle_{\mc{C}}$  for $F \in \mathcal F$.
\end{definition}

\subsection{The general construction and proof of Theorem \ref{thm: sinh and one pt}} \label{subsec: construction sinh R}

We now construct the Sinh-Gordon model on $\mathcal{C}_R$. We begin by defining the approximate probability measures analogously to the case $R=1$. We use the scaling relations for the GFF (see \eqref{scalinrel}) and for the GMC (see Proposition \ref{prop: GMC construction}) to rewrite the path integral on $\mathcal{C}_R$ in terms of a path integral on $\mathcal{C}$.

\begin{definition}
Let $T>0$. Denote by $\langle \cdot \rangle_{\mc{C}_{R,T}}$ the probability measure on $C(\mathbb{R},H^{-s}(\T_R))$ with expectation values given, for any bounded measurable $F \in \mc{F}^R$ with support in $[-T,T]$, by
\begin{equation*}
\langle F \rangle_{\mc{C}_{R,T}}
:=
\frac{1}{Z_{\mc{C}_{R,T}}}\int_\R  \E[F(c+ B_{(\bullet+T) /R} + \varphi_{(\bullet + T)/R}) \, \cdot  e^{- \sum_{\sigma = \pm 1} \mu R^{\gamma Q} e^{\sigma\gamma c} M_\gamma^\sigma(\mc{C}_{[0,2T/R]})}]dc,
\end{equation*}
where $Z_{\mc{C}_{R,T}} = \int_\R \mathbb{E}[e^{- \sum_{\sigma = \pm 1} \mu R^{\gamma Q} e^{\sigma\gamma c} M_\gamma^\sigma(\mc{C}_{[0,2T/R]})}]dc$ is the partition function.
\end{definition}

We may now use our analysis of the model in the case $R=1$ to construct the averages for $R>0$. In particular, the following lemma is a direct consequence of Proposition \ref{prop: sinh expectation}.
The first eigenvalue  $\la_0$ and eigenstate $\psi_0$ of the Hamiltonian ${\bf H}$ on $\mc{C}$ depends on the coupling constant $\mu$. Since 
this coupling constant now plays an important role in passing from the $\mc{C}_R$ case to the $\mc{C}$ case by scaling, 
we shall now denote  $\la_0^\mu$ and $\psi_0^\mu$ for this eigenvalue. 

\begin{lemma} \label{lem: sinh expectation R}
Let $\mu>0$, $\gamma \in (0,2)$, $R>0$, and set $\mu_{R}=\mu R^{\gamma Q}$. Let $F \in \mathcal{F}^R$ with support in $[t_1,t_2]$. Then
\begin{align*}
\lim_{T \rightarrow \infty} \langle F \rangle_{\mc{C}_{R,T}}
=
\langle F \rangle_{\mc{C}_R},	
\end{align*}
where $\langle F \rangle_{\mc{C}_R}\in \mathbb{R}$ is the (deterministic) constant given by the explicit formula
\begin{align}
\langle F \rangle_{\mc{C}_R}
&=	
e^{\lambda_{0}^{\mu_R} \frac{(t_2-t_1)}R} \Big\langle 	\mathbb{E}_{\varphi} \Big[ F(c+B_{(\bullet+t_1)/R}+
\varphi_{(\bullet+t_1)/R})
\\
&\hspace{35mm} \times \psi^{\mu_R}_0(c+B_{(t_2-t_1)/R}+
\varphi_{(t_2-t_1)/R}) e^{- \mu_R \sum_{\sigma = \pm 1} 
e^{\sigma\gamma c} M_\gamma^\sigma(\mc{C}_{[0,(t_2-t_1)/R]})}\big], \psi_0^{\mu_R} \Big \rangle_{\mc{H}}	
\end{align}

\end{lemma}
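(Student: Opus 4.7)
The plan is to reduce the case $R>0$ to the case $R=1$ via a scaling argument, and then directly invoke Proposition \ref{prop: sinh expectation}.

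The first step is to observe that the definition of $\langle F \rangle_{\mc{C}_{R,T}}$ is already expressed entirely in terms of $R=1$ objects: the Brownian motion $B_\bullet$ and process $\varphi_\bullet$ that appear are the $R=1$ ones from Definition \ref{def: Markov process}, and $M_\gamma^\sigma$ is the $R=1$ GMC measure, with the coupling constant $\mu$ replaced by $\mu_R := \mu R^{\gamma Q}$ and the time horizon $T$ replaced by $T/R$. This is exactly the content of the scaling relations \eqref{scalinrel} and \eqref{scalingGMC} that were used to set up the definition. To recast this as a clean identity, I would introduce the rescaled test function $\widetilde F : C(\R, H^{-s}(\T)) \to \R$ characterized by
\[
\widetilde F(\phi) = F(\phi(\cdot/R)),
\]
where the right hand side uses the identification of $C(\R, H^{-s}(\T))$ with a subset of $C(\R, H^{-s}(\T_R))$ coming from the spatial rescaling $\rho_R$. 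If $F$ has support $[t_1,t_2]$ then $\widetilde F$ has support $[t_1/R, t_2/R]$, and the change of time variable $u = t/R$ gives the pointwise equality
\[
F(c + B_{(\bullet + T)/R} + \varphi_{(\bullet + T)/R}) = \widetilde F(c + B_{\bullet + T/R} + \varphi_{\bullet + T/R}).
\]
Comparing the defining formulas, one then reads off
\[
\langle F \rangle_{\mc{C}_{R,T}} = \langle \widetilde F \rangle_{\mc{C}_{1,T/R},\mu_R},
\]
i.e.\ the $R>0$ average at horizon $T$ and coupling $\mu$ coincides with the $R=1$ average at horizon $T/R$ and coupling $\mu_R$.

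Next, I would apply Proposition \ref{prop: sinh expectation}, which is valid for every value of the coupling constant, to the right hand side with $\mu_R$ in place of $\mu$. As $T \to \infty$, equivalently $T/R \to \infty$, this yields both existence of the limit and the explicit formula from Definition \ref{def:<F>}:
\[
\lim_{T \to \infty} \langle F \rangle_{\mc{C}_{R,T}} = \langle \widetilde F \rangle_{\mc{C},\mu_R},
\]
expressed in terms of the ground state $\psi_0$ and smallest eigenvalue $\lambda_0(\mu_R)$ of the Hamiltonian $\mathbf{H}$ at coupling $\mu_R$ (note that $\psi_0$ and $\lambda_0$ are those of the $\mc{C}_1$ Hamiltonian, not an $R$-dependent object). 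Translating back from $\widetilde F$ to $F$ via the identity above, and using that $\widetilde F$ is supported on an interval of length $(t_2-t_1)/R$, produces precisely the claimed expression for $\langle F \rangle_{\mc{C}_R}$.

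The main obstacle here is purely bookkeeping: one must track how the time rescaling $t \mapsto t/R$ contracts time supports and horizons by a factor of $R$, and how the GMC scaling from Proposition \ref{prop: GMC construction} is responsible for the effective coupling $\mu_R = \mu R^{\gamma Q}$. No analytic input beyond Proposition \ref{prop: sinh expectation} and the scaling properties already established in Section \ref{sec: GFF} is required.
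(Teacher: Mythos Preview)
Your proposal is correct and follows exactly the approach the paper indicates: the paper states that this lemma is a direct consequence of Proposition~\ref{prop: sinh expectation} together with the scaling relations for the GFF and GMC, and you have carefully spelled out the bookkeeping (the rescaled test function $\widetilde F$ with support $[t_1/R,t_2/R]$, the identity $\langle F\rangle_{\mc{C}_{R,T}}=\langle \widetilde F\rangle_{\mc{C}_{1,T/R},\mu_R}$, and the invocation of Proposition~\ref{prop: sinh expectation} at coupling $\mu_R$) that the paper leaves implicit.
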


As for $R=1$, we obtain a unique extension to a probability measure by the Carath\'eodory extension theorem and controlling the normalization. In particular, we may now define the Sinh-Gordon model on $\mathcal{C}_R$.
\begin{definition}[Definition of the Sinh-Gordon model for $R>0$]
Let $\mu >0$, $\gamma \in (0,2)$, and $R>0$. The Sinh-Gordon model on $\mathcal{C}_{R}$ with parameters $\mu$ and $\gamma$ is the unique probability measure $\langle \cdot \rangle_{\mc{C}_R}$ on $C(\R,H^{-s}(\T_R))$ for $s>0$ with expectation values given by $\langle F \rangle_{\mc{C}_R}$ for $F \in \mathcal F^R$.
\end{definition}

We now combine the results of this and the previous subsections to prove Theorem \ref{thm: sinh and one pt}.
\begin{proof}[Proof of Theorem \ref{thm: sinh and one pt}]
The construction of the probability measures corresponding to the Sinh-Gordon models on $\mathcal{C}$ and $\mathcal{C}_R$ in the general case $R>0$ is a consequence of Proposition \ref{prop: sinh expectation} and Lemma \ref{lem: sinh expectation R}, respectively. Moreover, the exact formulas in Theorem \ref{thm: sinh and one pt} correspond to the definition of $\langle F \rangle_{\mc{C}_R}$ for $F \in \mathcal F^R$.
\end{proof}

\subsection{Vertex correlations} \label{subsec:_vertex_correlations}

We now turn to the construction of vertex correlations and the proof of Theorem \ref{thm: sinh correlations}. Recall that an insertion set is a finite set $\mathcal{I} \subset \mathbb{R} \times \mc{C}_R$ and by convention we assume that if $(\alpha,z), (\alpha',z) \in \mathcal{I}$, then necessarily $\alpha = \alpha'$, i.e.\ we do not allow repeated insertions at the same point $z\in \mc{C}_R$. Furthermore, we write $z=(t,\theta)$ for the points on $\mc{C}_R$.

\begin{definition}
Let $\mu > 0$, $\gamma \in (0,2)$, $R>0$, and let $\mathcal{I}$ be an insertion set. For $\eps > 0$, define
\begin{equation} \label{def: vertex prelimit}
\Big\cjg \prod_{(\alpha,z)\in \mc{I}}V^\eps_{\alpha}(z)\Big\cjd_{\mc{C}_R} :=
\Big\langle \prod_{(\alpha, (t, \theta)) \in \mathcal{I}}\eps^{\alpha^2/2}e^{\alpha (c+ B_{t/R} +  \varphi^{R,\eps}(t,\theta)) } \Big\rangle_{\mc{C}_R}\, ,
\end{equation}
where we recall that $\varphi^{R,\eps}$ denotes the circle average process defined by \eqref{circle_average}.
\end{definition}
Without loss of generality, we set $R=1$ and note that the general case $R>0$ may be treated by the scaling relation as in the construction of the measure. For convenience, we assume that the insertion set is of the form (for some $t>0$ fixed)
\begin{equation}\label{defmcI}
\mathcal I
=
\{ (\alpha_{ij}, (t_i, \theta_{ij})) \,|\,  i,j\in [1,N],  0 < t_1 < \dots < t_N<t \}.	
\end{equation}
An insertion set of arbitrary support can be treated by using the translation invariance of the process.

Let $F_\eps(c+ B_\bullet + \varphi_\bullet):=\prod_{(\alpha_{ij}, (t_i, \theta_{ij})) \in \mathcal{I}}\eps^{\alpha_{ij}^2/2}e^{\alpha_{ij}(c+B_{t_i}+\varphi^{\eps}(t_i,\theta_{ij}))}$ (the integrand in the right hand side of \eqref{def: vertex prelimit}).
For any fixed $\beta > 0$, we rewrite the smoothed correlation function as 
\begin{align}
\big\cjg \prod_{(\alpha,z)\in \mc{I}}V^\eps_{\alpha}(z)\big\cjd_{\mc{C}}
&=
\int \E [ G_{\beta,\eps}(c+\varphi) ] e^{-\beta |c|}dc,  
\\
G_{\beta,\eps}(c+\varphi)
&:=
e^{\beta |c|} e^{\lambda_0 t }\mathbb E_\varphi [F_\eps(c+ B_\bullet + \varphi_\bullet) \psi_0(c+B_t+\varphi_t) e^{-\mu\sum_{\sigma = \pm 1} e^{\sigma\gamma c} M^\sigma_\gamma(\mathcal{C}_{[0,t]})}] \psi_0(c+\varphi).	
\end{align}
The parameter $\beta$ is introduced to deduce convergence from uniform integrability of the integrand with respect to a \emph{finite} measure. In addition, we introduce the following notation. For $\eps > 0$ and $\varphi\in H^{-s}(\T)$ the random variable \eqref{def_varphi}
let 
\begin{equation}\label{def_of_h}
h_\eps(\theta)
:=
\sum_{i,j} \alpha_{ij}\mathbb E[ \varphi(\theta) \varphi^\eps_{t_i}(\theta_{ij})], \quad h(\theta): = \sum_{i,j} \alpha_{ij}\mathbb E[ \varphi(\theta) \varphi_{t_i}(\theta_{ij})].
\end{equation}
By \eqref{covvarphiR}, note that $h_\eps$ is  smooth for $\eps>0$ small enough  and converges to $h$ in $C^\infty(\mathbb T)$ as $\eps \rightarrow 0$ (recall that $t_1 > 0$). Furthermore, note that 
\begin{equation}
Ph_\eps(t,\theta) = \sum_{i,j} \alpha_{ij}\mathbb E[ \varphi_t(\theta) \varphi^\eps_{t_i}(\theta_{ij})], \quad Ph(t,\theta) = \sum_{i,j} \alpha_{ij}\mathbb E[ \varphi_t(\theta) \varphi_{t_i}(\theta_{ij})]	
\end{equation}
where $Ph_{\eps}$ is the harmonic function equal to $h_\eps$ at $t=0$ and decaying to $0$ as $t\to +\infty$.

In the following lemma, we will show that the random variables $(G_{\beta,\eps})$ are uniformly integrable with respect to the finite measure $e^{-\beta |c|} \mu_0$ for arbitrary $\beta > 0$.
 This will allow us to take the limit $\eps \rightarrow 0$. In order to determine whether the limit is nontrivial, recall  that we say an insertion set $\mathcal{I}$ is called $\gamma$-admissible if $|\alpha| < Q = \frac{2}{\gamma} + \frac \gamma 2$ for all $(\alpha,z) \in \mathcal{I}$. 

\begin{lemma}\label{lem: vertex correlations}
For every $\beta > 0$, there exists $C>0$ such that
\begin{equation} \label{eq: vc unif l2}
\limsup_{\eps \rightarrow 0}\int \mathbb{E}[G_{\beta,\eps}(c+\varphi)^2] e^{-\beta |c|}dc
\leq
C. 
\end{equation}
Consequently, with the notation \eqref{defmcI}, one has 
\begin{align}\label{eq: vc limit}
\lim_{\eps \rightarrow 0}\Big\cjg \prod_{(\alpha,z)\in \mc{I}}V^\eps_{\alpha}(z)\Big\cjd_{\mc{C}}	
&=
e^{\lambda_0 t+\frac{1}{2}\sum_{i,j}\alpha_{ij}^2t_i}  \int_{\R} e^{\sum_{i,j}\alpha_{ij} c}\mathbb{E}\big[\psi_0\big(c+B_t+\sum_{i,j} \alpha_{ij}t_i+ \varphi_t + Ph (t,\cdot))\psi_0(c+\varphi+h) 
\\
& \qquad \qquad \times e^{-\mu \sum_{\sigma = \pm 1} e^{\sigma\gamma c} \int_{\mathcal{C}_{[0,t]}} \prod_{i,j} |e^{-s+i\theta}-e^{-t_i+i\theta_{ij}}|^{-\gamma \sigma \alpha_{ij}} M^\sigma_\gamma(dsd\theta)}\big ]dc
\end{align}
and the limiting correlation is nontrivial if and only if $\mathcal I$ is $\gamma$-admissible.

\end{lemma}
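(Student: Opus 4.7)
The strategy is to apply the Cameron--Martin (Girsanov) theorem to the Gaussian weight
\begin{equation*}
F_\epsilon := \prod_{(\alpha_{ij}, z_{ij}) \in \mc{I}} \epsilon^{\alpha_{ij}^2/2}\, e^{\alpha_{ij}(c + B_{t_i} + \varphi^\epsilon(t_i, \theta_{ij}))},
\end{equation*}
which, up to the $\epsilon^{\alpha_{ij}^2/2}$ normalization, is a Girsanov density for the joint Gaussian field $(\varphi, B, \varphi_\bullet)$. Under this shift $\varphi$ is translated by $h_\epsilon$, the Brownian part $B_s$ by $\sum_{i,j}\alpha_{ij}\min(s, t_i)$, the fluctuation $\varphi_s(\theta)$ by $Ph_\epsilon(s,\theta)$, and the exponential $e^{\sigma\gamma(B_s+\varphi_s(\theta))}$ entering the GMC density gets multiplied by $\prod_{i,j}|e^{-s+i\theta}-e^{-t_i+i\theta_{ij}}|^{-\gamma\sigma\alpha_{ij}}$. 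The regularized variance $V_\epsilon := \mathrm{Var}\!\big(\sum_{i,j}\alpha_{ij}(B_{t_i}+\varphi^\epsilon(t_i,\theta_{ij}))\big)$ has leading behaviour $-\big(\sum\alpha_{ij}^2\big)\log\epsilon + \sum\alpha_{ij}^2 t_i + O(1)$, so the log-divergence in $e^{V_\epsilon/2}$ cancels exactly against $\prod\epsilon^{\alpha_{ij}^2/2}$; the surviving finite prefactors, together with the shifted integrand, reproduce the formula (\ref{eq: vc limit}).

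The main technical point is the uniform $L^2$ bound (\ref{eq: vc unif l2}). I would first apply Cauchy--Schwarz to the inner conditional expectation,
\begin{equation*}
\mathbb{E}_\varphi[F_\epsilon \Phi]^2 \leq \mathbb{E}_\varphi[F_\epsilon] \cdot \mathbb{E}_\varphi[F_\epsilon \Phi^2],
\end{equation*}
with $\Phi := \psi_0(c+B_t+\varphi_t)\, e^{-\mu\sum_{\sigma=\pm 1} e^{\sigma\gamma c} M^\sigma_\gamma(\mc{C}_{[0,t]})}$. A single Cameron--Martin shift under $\mathbb{P}_\varphi$ evaluates each factor on the right, again with $\epsilon$-uniform prefactor, and produces a $\varphi$-dependent Gaussian exponential $e^{2\sum_{i,j}\alpha_{ij}P\varphi^\epsilon(t_i,\theta_{ij})}$. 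I then absorb this Gaussian exponential by a further Cameron--Martin shift on $\varphi$ itself, which is legitimate since $\mathrm{Var}(\sum\alpha_{ij}P\varphi(t_i,\theta_{ij}))$ is finite (all $t_i > 0$). Bounding $e^{-\mu\sum} \leq 1$ and using the semigroup identity $\mathbb{E}_\varphi[\psi_0(c+B_t+\varphi_t+\text{drift})^2] = (e^{-t\mathbf{H}^0}\psi_0^2)(c+\text{drift}+\varphi+\text{boundary shift})$, this yields a pointwise estimate of the form
\begin{equation*}
\mathbb{E}[G_{\beta,\epsilon}(c+\varphi)^2] \leq C\, e^{2\beta|c|}\, e^{2\sum\alpha_{ij} c}\, \mathbb{E}\!\left[\psi_0(c+\varphi+u_\epsilon)^2 (e^{-t\mathbf{H}^0}\psi_0^2)(c+v+\varphi+w_\epsilon)\right],
\end{equation*}
where $u_\epsilon, w_\epsilon$ converge in $C^\infty(\T)$.

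Since $\psi_0 \in e^{-N|c|}L^p(\mu_0)$ for every $N\geq 0$ and $p<\infty$ by Theorem \ref{thm: spectrum H}(ii), and since $e^{-t\mathbf{H}^0}$ maps $L^q(\mu_0)$ into $e^{-N|c|}L^p(\mu_0)$ for any $p<q$ by Lemma \ref{LinftyL2}(3), both $\psi_0^2$ and $e^{-t\mathbf{H}^0}\psi_0^2$ possess arbitrarily fast exponential decay in $|c|$ in the weighted $L^p$ sense. Combined with H\"older's inequality and the uniform $L^p$-control of the Cameron--Martin Radon--Nikodym derivative for the smooth shifts $u_\epsilon, w_\epsilon$, integrating the previous bound against $e^{-\beta|c|}dc$ gives a finite constant independent of $\epsilon$, establishing (\ref{eq: vc unif l2}). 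The convergence (\ref{eq: vc limit}) then follows from Vitali's convergence theorem: the post-Cameron--Martin integrand converges pointwise ($\mu_0$-a.s.) thanks to the $C^\infty$ convergences $h_\epsilon \to h$, $Ph_\epsilon \to Ph$, and the $\mathbb{P}$-a.s.\ convergence of the regularized GMC to $M^\sigma_\gamma$ (Proposition \ref{prop: GMC construction}). Finally, the nontriviality dichotomy is read off (\ref{eq: vc limit}): when $\mc{I}$ is $\gamma$-admissible, standard Seiberg-type GMC moment bounds ensure that $\int_{\mc{C}_{[0,t]}}\prod|e^{-s+i\theta}-e^{-t_i+i\theta_{ij}}|^{-\gamma\sigma\alpha_{ij}} M^\sigma_\gamma(dsd\theta)$ is finite $\mathbb{P}$-a.s.\ (since $|\alpha_{ij}|<Q$), and the $\mu_0$-a.e.\ positivity of $\psi_0$ (Lemma \ref{lem: positivity improving}) makes the integral strictly positive; conversely, if some $|\alpha_{ij}|\geq Q$ the singularity of $|\cdot|^{-\gamma|\alpha_{ij}|}$ at $z_{ij}$ is non-integrable against $M^\sigma_\gamma$, so that integral is $+\infty$ $\mathbb{P}$-a.s.\ and $e^{-\mu e^{\sigma\gamma c}\cdot\infty}=0$ identically in $c$, forcing the limit to vanish. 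The hardest step is coordinating the three Cameron--Martin applications in the $L^2$ bound with the weighted $L^p$ regularity of the ground state.
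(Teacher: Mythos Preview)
Your approach is essentially the same as the paper's: Girsanov/Cameron--Martin to remove the vertex weights, bound $e^{-\mu\,\mathcal{U}}\leq 1$, undo the remaining smooth shifts $h_\epsilon,Ph_\epsilon$ by a further Cameron--Martin, invoke the weighted $L^p$ decay of $\psi_0$, and conclude by Vitali; the nontriviality argument is also the same. The only tactical difference is the order of operations: the paper applies Girsanov once to the full expectation $\mathbb{E}[G_{\beta,\epsilon}^2]$ and then uses Cauchy--Schwarz to split $\psi_0(\cdot)^2\psi_0(\cdot)^2$ into two $\psi_0^4$ factors, whereas you first apply Cauchy--Schwarz at the conditional level (treating $F_\epsilon\,d\mathbb{P}_\varphi$ as a measure) and then shift each factor separately, paying an extra Cameron--Martin on $\varphi$ to absorb the resulting $e^{2\sum\alpha_{ij}P\varphi^\epsilon(t_i,\theta_{ij})}$. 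Both routes land on the same weighted integral of $\psi_0^4$ and $e^{-t\mathbf{H}^0}\psi_0^4$.

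One small correction: your citation of Lemma~\ref{LinftyL2}(3) for the mapping property of $e^{-t\mathbf{H}^0}$ is wrong, since that lemma concerns $e^{-t\mathbf{H}}$ and the $e^{-N|c|}$ gain there comes precisely from the $\cosh$ potential, which is absent in $\mathbf{H}^0$. What you actually need (and what the paper uses) is simpler: $\psi_0^2\in e^{-N|c|}L^p(\mu_0)$ for all $N,p$ by Lemma~\ref{lem:eigenfunction_decay}, and $e^{-t\mathbf{H}^0}=e^{t\partial_c^2/2}\otimes e^{-t\mathbf{P}}$ preserves $e^{-N|c|}L^p$ since the heat kernel on $\mathbb{R}$ convolves an $e^{-N|c|}L^p$ function to another one (Young's inequality, Gaussian beats exponential) and $e^{-t\mathbf{P}}$ is a contraction on $L^p(\Omega_\T)$. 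With this fix your argument goes through.
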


\begin{proof}
We apply the Girsanov transformation to remove the terms $e^{\alpha_{ij}(B_{t_i}+\varphi^\eps(t_i,\theta_{ij}))}$ 
and get a shift of the field by $\sum_{ij} \alpha_{ij} t_i+Ph_\eps$: this yields
\begin{align*}
& \mathbb{E}[G_{\beta,\eps}(c,\varphi)^2]\\
& =
e^{2\beta|c|+2\lambda_0 t+\frac{1}{2}\sum_{i,j}\alpha_{ij}^2t_i+2\sum_{i,j} \alpha_{ij} c } \mathbb E [ \psi_0(c+B_t + \sum_{i,j} \alpha_{ij} t_i+ \varphi_t + Ph_\eps(t,\cdot))^2 \psi_0(c+\varphi+h_\eps)^2 e^{-\mu \, \mathcal{U}_{t,\eps} (c,\varphi)}],
\end{align*}
where 
\begin{equation}
\mathcal{U}_{t,\eps}(c,\varphi)
:=
\sum_{\sigma = \pm 1} e^{\sigma\gamma c} \int_{\mathcal{C}_{[0,t]}} \int_{\mathbb{T}} \prod_{ij} |e^{-s+i\theta}-e^{-t_i^{\eps}(v)+i\theta_{ij}^{\eps}(v)}|^{-\gamma \sigma \alpha_{ij}} \frac{dv}{2\pi} M^\sigma_\gamma(dsd\theta),
\end{equation}
and $(t_i^\eps(v), \theta_{ij}^\eps(v)) = (t_i,\theta_{ij}) + \eps(\cos(v),\sin(v)) $.
We bound $e^{-\mu \, \mathcal{U}_{t,\eps} (c,\varphi)}$ by $1$ and use the Cauchy-Schwarz inequality to get
 \[\begin{split}
 \mathbb{E}[G_{\beta,\eps}(c+\varphi)^2]
\leq & e^{2\beta|c|+2\lambda_0 t+\frac{1}{2}\sum_{i,j}\alpha_{ij}^2t_i+2\alpha_{ij} c }  \E [ \psi_0(c+B_t + \sum_{i,j} \alpha_{ij} t_i+ \varphi_t + Ph_\eps(t,\cdot))^4]^{1/2} \\
& \times \E[\psi_0(c+\varphi+h_\eps)^4]^{1/2}. 
\end{split}\]

Note that since $t>0$, $Ph(t,\cdot):=\lim_{\eps \rightarrow 0} Ph_\eps(t,\cdot)$ is smooth. Hence, by the Cameron-Martin theorem we have that there exists $C>0$ such that uniformly in $\eps$ sufficiently small
\begin{equation} \label{eq: vc 1}
\mathbb{E}[\psi_0(c+B_t + \sum_{i,j} \alpha_{ij} t_i+ \varphi_t + Ph_\eps(t,\cdot))^4]
\leq
C \,  \mathbb{E}[\psi_0(c+B_t+ \varphi_t)^4] \leq C\E [e^{-t\H_0}\psi_0^4]. 	
\end{equation}
Similarly, since $h_\eps$ is uniformly smooth as $\eps\to 0$, by the Cameron-Martin theorem
\[ \E[\psi_0(c+\varphi+h_\eps)^4]  \leq C \E[\psi_0(c+\varphi)^4].\]

Combining these estimates, Lemma \ref{lem:eigenfunction_decay} and the boundedness of $e^{-t{\bf H}^0}$ on 
$L^1(H^{-s}(\T),\mu_0)$ (Proposition \ref{prop:FreeHamiltonian})
there exists $C,C'>0$, depending on $t$  such that uniformly in $\eps$,
\begin{equation}
\int \mathbb E [G_{\beta,\eps}^2(c+\varphi)] e^{-\beta|c|}dc
\leq C \int (e^{\beta|c|+\sum_{i,j}\alpha_{ij} c} \, \mathbb{E}[\psi_0^{4}]+ \E[e^{-t{\bf H}^0}(\psi_0^{4})]) dc\leq C'.
\end{equation}
This establishes the uniform $L^2$ estimate \eqref{eq: vc unif l2}.

We now turn to \eqref{eq: vc limit}. Note that $G_{\beta,\eps}$ converges pointwise to $e^{|\beta| c}$ times the integrand in \eqref{eq: vc limit}. Furthermore, $\eqref{eq: vc unif l2}$ implies that $(G_{\beta,\eps})$ is uniformly integrable with respect to the finite measure $e^{-\beta|c|}\mu_0$.  Hence, by Vitali's convergence theorem, we obtain convergence of the corresponding integrals. Rearranging the factor $e^{-\beta|c|}$ yields \eqref{eq: vc limit}. To prove nontriviality, note that all the expressions in \eqref{eq: vc limit} are positive almost everywhere \emph{except} for the term involving the GMC. Thus it is sufficient to establish that
\begin{equation}
e^{\sum_{i,j} \alpha_{ij} c}\exp \big(-\mu \sum_{\sigma = \pm 1} e^{\sigma\gamma c} \int_{\mathcal{C}_{[0,t]}} \prod_{i,j} |e^{-s+i\theta}-e^{-t_i+i\theta_{ij}}|^{-\gamma \sigma \alpha_{ij}} M^\sigma_\gamma(dsd\theta)\big) > 0 \, \text{ a.e., }	
\end{equation}
which is in turn true (i.e.\ the term in the exponential is finite almost surely) if and only if $\mathcal I$ is $\gamma$-admissible, see \cite[Lemma 5.19]{Berestycki_lqggff} and in particular Steps 2 and 3 there.
\end{proof}

We now turn to the proof of Theorem \ref{thm: sinh correlations}.
\begin{proof}[Proof of Theorem \ref{thm: sinh correlations}]
For $R=1$, the convergence, nontriviality, and explicit formula of vertex correlations with arbitrary insertion set follows directly from Lemma \ref{lem: vertex correlations}. The generic case $R>0$, as we have argued previously, follows from the scaling relation for GMC. We observe that for the $1$-point function, if the insertion is at $(t_1,\theta_1)=(0,0)$ with weight $|\alpha|<Q$, we can let $t\to 0$ in the expression \eqref{eq: vc limit} and obtain (with $h(\theta)=-\alpha \log|e^{i\theta}-1|$ as in \eqref{def_of_h}):
\begin{equation}\label{1-point}
\cjg V_{\alpha}(0)\cjd_{\mc{C}}=  \|e^{\alpha c/2} \psi_0(\cdot + h)\|^2_{\mc{H}}.
\end{equation}

We now turn to the exponential decay of correlations for the truncated two-point function, or covariance, of the vertex correlations. We will fix $R=1$ for ease of notation; the case $R>0$ follows with straightforward modifications. As a preliminary step, we analyze covariances of bounded functions of compact support. Let $\eps > 0$. Using the same approach as in the proof of Proposition \ref{prop: sinh expectation}, for every $F_\eps \in \mathcal A$ bounded and supported on $[-\eps,\eps]$, define the map
\begin{equation}
\mc{B}_{F_\eps}:\mathcal H \rightarrow \mathcal H, \quad (\mc{B}_{F_\eps} \psi)(c,\varphi) = \mathbb E_\varphi[F_\eps(c+B_{\bullet +\eps} +\varphi_{\bullet +\eps}) \psi(c+B_{2\eps}+\varphi_{2\eps})e^{-\mu  \sum_{\sigma = \pm 1} e^{\sigma\gamma c}M_\gamma^\sigma([0,2\eps]\times\T)}].
\end{equation}
In particular, when $\eps = 0$ we have that $\mc{B}_{F_0}\psi = F_0 \psi  \in \mathcal {H}$. Given $F_\eps,G_\eps \in \mathcal A$ that are bounded and supported on $[-\eps,\eps]$, by the domain Markov property and tower property of conditional expectations, we have
\begin{equation}
\begin{split}
\langle F_\eps \cdot \tau_{t} G_\eps \rangle_{\mathcal C_1} =
 e^{\lambda_0 (t+2\eps)} \cjg \mc{B}_{F_\eps}e^{-(t-2\eps){\bf H}}\mc{B}_{G_\eps}\psi_0,\psi_0\cjd_{\mc{H}},
\end{split}
\end{equation}
where we recall that $\tau_t G$ translates the support of $G$ by $+t$. 

It is straightforward to check that the operator $\mc{B}_{F_\eps}$ is self-adjoint (using that $F_\eps$ is real valued and the 
dynamics $B_s+\varphi_s$ is reversible with respect to $\mu_0$), thus the eigenfunction expansion in Theorem \ref{thm:spectrum_H} yields (for $t>2\eps)$: 
\[\begin{split}
\langle F_\eps \cdot \tau_{t} G_\eps \rangle_{\mathcal C_1} &= e^{\lambda_0(t+2\eps)} \sum_{j \geq 0} e^{-\lambda_j (t-2\eps)} \langle  \mc{B}_{F_\eps}\psi_0,\psi_j \rangle_{\mathcal H} \langle \mc{B}_{G_\eps}\psi_0,\psi_j\rangle_{\mathcal H}.
\end{split}\]
Hence since $\langle F_\eps \rangle_{\mathcal C_1}=e^{2\la_0\eps}\cjg \mc{B}_{F_\eps}\psi_0,\psi_0\cjd_{\mc{H}}$ and $\langle G_\eps \rangle_{\mathcal C_1}=e^{2\la_0\eps}\cjg \mc{B}_{G_\eps}\psi_0,\psi_0\cjd_{\mc{H}}$,
we have the representation
\begin{equation} \label{eq: covariances}
{\rm Cov}(F,\tau_t G) := \langle F_\eps \cdot \tau_tG_\eps \rangle_{\mathcal C_1} - \langle F_\eps \rangle_{\mathcal C_1} \langle \tau_t G_\eps \rangle_{\mathcal C_1} = e^{-(\lambda_1-\lambda_0)t} \mathcal R_t(F_\eps, G_\eps),
\end{equation}
where
\begin{equation}
\mathcal R_t(F_\eps, G_\eps):= e^{2(\lambda_1+\lambda_0)\eps} \sum_{j \geq 1} e^{-(\lambda_j - \lambda_1)(t-2\eps)} \langle  \mc{B}_{F_\eps}\psi_0,\psi_j \rangle_{\mathcal H} \langle \mc{B}_{G_\eps}\psi_0,\psi_j\rangle_{\mathcal H} 
\end{equation}
In particular, when $F_\eps = G_\eps$, the covariance is non-negative.

Let us now turn to vertex correlations. Fix $t>0$ without loss of generality. Let $|\alpha_1|,|\alpha_2|<Q$ and $\theta_1,\theta_2 \in \mathbb T$. For each $K \in \mathbb N^*$, write
\begin{equation}
V_{\alpha}^\eps(s,\theta)_K:= \min(V_{\alpha}^\eps(s,\theta),K) \in \mathcal A,
\end{equation}
which is bounded and supported on $[-\eps, \eps]$. Hence, by \eqref{eq: covariances} we obtain that for every $\eps > 0$ such that $2\eps < t$ and for every $\delta > 0$,
\begin{equation}
{\rm Cov}\Big(V_{\alpha_1}^\eps(0,\theta_1)_K,  V_{\alpha_2}^\eps(t,\theta_2)_K  \Big) = e^{-(\lambda_1-\lambda_0)t} e^{(\lambda_1-\lambda_0)\delta} e^{-(\lambda_1-\lambda_0)\delta }\mathcal R_t(V_{\alpha_1}^\eps(0,\theta_1)_K,  V_{\alpha_2}^\eps(0,\theta_2)_K).
\end{equation}
We now choose $\eps$ and $\delta$ such that $2\eps < \delta < t$. Then by the Cauchy-Schwarz inequality, 
\begin{equation}
|e^{-(\lambda_1-\lambda_0)\delta }\mathcal R_t(V_{\alpha_1}^\eps(0,\theta_1)_K,  V_{\alpha_2}^\eps(0,\theta_2)_K)| \leq e^{-(\lambda_1-\lambda_0)\delta }\prod_{i=1,2} \mathcal R_\delta( 	V_{\alpha_i}^\eps(0,\theta_i)_K, V_{\alpha_i}^\eps(0,\theta_i)_K)^{1/2}.
\end{equation}
However, by \eqref{eq: covariances} and the fact that the covariances are positive when $F_\eps=G_\eps$, we obtain 
\begin{align*}
& \limsup_{\eps \rightarrow 0}\limsup_{K \rightarrow \infty}\Big|\prod_{i=1,2} \mathcal R_\delta( 	V_{\alpha_i}^\eps(0,\theta_i)_K, V_{\alpha_i}^\eps(0,\theta_i)_K)^{1/2}\Big|e^{-(\lambda_1-\lambda_0)\delta }	\\
& \leq \lim_{\eps \rightarrow 0} \lim_{K \rightarrow \infty} \prod_{i=1,2} {\rm Cov}\Big(V_{\alpha_i}^\eps(0,\theta_i)_K, V_{\alpha_i}^\eps(\delta,\theta_i)_K  \Big)^{1/2}.
\end{align*}
But, since the covariances of vertex correlations converge, we therefore have that
\begin{equation}
\Big|{\rm Cov}\Big(V_{\alpha_1}(0,\theta_1), V_{\alpha_2}(t,\theta_2)\Big)\Big| \leq e^{-(\lambda_1-\lambda_0)t} C(\alpha_1,\theta_1; \alpha_2,\theta_2),	
\end{equation}
where
\begin{equation}
C(\alpha_1,\theta_1;\alpha_2,\theta_2):= e^{(\lambda_1-\lambda_0)\delta} \prod_{i=1,2}{\rm Cov}\Big(V_{\alpha_i}(0,\theta_i), V_{\alpha_i}(\delta,\theta_i) \Big)^{1/2}. 	\qedhere
\end{equation}
\end{proof}

\appendix

\section{Heuristics for the path integral} \label{appendix: path integral}

\subsection{Brownian motion and Ornstein-Uhlenbeck processes from the path integral}\label{sec:Brownian_OU}

The formal one-dimensional path integral over the space $C^0_x([0,T]):=\{\phi \in C^0([0,T],\R)\,|\, \phi(0)=x\}$ of continuous functionswith $T>0$ and $x\in \R$
\begin{equation}\label{1dPathIntBM}
\int_{C^0_x([0,T])} F(\phi) e^{-\frac{1}{2 \sigma^2}\int_{0}^T |\pl_t\phi(t)|^2dt} D\phi
\end{equation}
can be rigorously defined using the Brownian motion $B^\sigma_t$ with initial condition $B^\sigma_0=0$ and covariance 
$\E[B_t^\sigma B^\sigma_s]=\sigma^2\min(s,t)$ by
\[\int_{C^0_x([0,T])} F(\phi) e^{-\frac{1}{2 \sigma^2}\int_{0}^T |\pl_t\phi(t)|^2dt} D\phi:=\E[F(x+B^\sigma)].\]
Notice that $B^\sigma_t=B_{\sigma^2t}$ in law, with $B_t=B^1_t$.
The heuristic consists in first taking $x=0$, in which case  one looks for a Gaussian random variable $B$ (formally in $H^1$) such that $\E[\cjg f,B\cjd_{H^1}\cjg g,B\cjd_{H^1}]=\cjg f,g\cjd_{H^1}$ for all 
$f,g\in H^1_0([0,T]):=\{f\in H^1([0,T])\,|\, f(0)=0\}$ with $\cjg f,g\cjd_{H^1}=\int_0^T\pl_tf(t)\pl_tg(t)dt$. A direct calculation 
gives that $B$ must satisfy $\E[B_sB_t]=\min(s,t)$  and we thus set $B$ to be the Brownian motion (the drawback is that $B$ is only $C^{1/2-\eps}$ H\"older). 
Then we add $x$ to $B^\sigma$ to represent the field with values $x$ at $t=0$.

We can apply the same kind of reasoning for the formal 1 dimensional path integral over $C^0_x([0,T])$ 
\begin{equation}\label{1dPathInt}
\int_{C^0_x([0,T])} F(\phi) e^{-\frac{1}{2 \sigma^2}\int_{0}^T( |\pl_t\phi(t)|^2+(\mu\sigma)^2|\phi (t)|^2) dt } D\phi
\end{equation}
for $\mu,\sigma>0$. This can be rigorously defined using the Ornstein-Uhlenbeck process $X_t$, which satisfies the following stochastic differential equation
\begin{equation}\label{OUSDE}
dX_t=-\mu \sigma X_t dt+ \sigma dB_t.
\end{equation}
This is a diffusion with generator given on $\R$ by
\begin{equation*}
\mathcal{L} f(s)=-\frac{\sigma^2}{2} f''(s)+ \mu \sigma s f'(s).
\end{equation*}
Starting from a point $X_0=x$ one can explicitly solve \eqref{OUSDE} yielding the formula
\begin{equation}\label{OUsolution}
X_t=x e^{-\mu \sigma t}+ \sigma e^{-\mu \sigma t} \int_0^t e^{ \mu \sigma s} dB_s= x e^{-\mu \sigma t}+ \tilde{X}_t
\end{equation} 
where $\tilde{X}_t$ is a Gaussian random variable with covariance structure
\begin{equation*}
\E[\tilde{X}_t \tilde{X}_s]= \frac{\sigma}{2 \mu} (e^{-\mu \sigma |t-s|}-e^{-\mu \sigma (t+s)} ).
\end{equation*}
Hence at stationarity the covariance is given by $\E[X_s X_t]= \frac{\sigma}{2\mu} e^{-\mu \sigma |t-s|}$.
The path integral \eqref{1dPathInt} can be given a sense as follows:
\[  \int_{C^0_x([0,T])} F(\phi) e^{-\frac{1}{2 \sigma^2}\int_{0}^T 
|\pl_t\phi(t)|^2+(\mu\sigma)^2 |\phi (t)|^2 dt } D\phi:= \E[F(X)]\]

\subsection{The GFF path integral}

We now analyse the path integral \eqref{eq: formal def approx sinh} underlying our construction and try to identify the associated Markov process as in the 1d case. Recall that a real valued smooth function $\phi$ on the cylinder $\mc{C}_R=\R\times \T_R$ can be decomposed using Fourier series in $\theta$:
\begin{equation}
\phi(t, \theta)= \phi_0(t)+ \sum_{n \not = 0} \phi_{n}(t) e^{i n \frac{\theta}{R}}. 
\end{equation}
The same also holds on $\mc{C}^+_{R,T}=[0,T]\times \T_R$. 
With this decomposition, and writing $\phi_{n}(t)$ under the form $\phi^R_{n}(t) = \frac{x_n(t)+i y_n(t)}{2 \sqrt{n}}$ with $x_n(t),y_n(t)$ real valued, a direct calculation gives
\begin{align*}
\frac {1} {4 \pi} \int_{\mc{C}^+_{R,T}}  |\nabla \phi|^2 dtd\theta & =   \frac {1} {4 \pi}   \int_{\mc{C}^+_{R,T}}  |\pl_t\phi_0(t)|^2 + \sum_{n \not = 0} |\pl_t \phi_{n}(t)|^2 dt d\theta +  \frac {1} {4 \pi R^2}  \int_{\mc{C}^+_{R,T}}   \sum_{n \not = 0}  |n \phi_n(t)|^2 dt d\theta \\
& = \frac {R} {2}   \int_{0}^T  |\pl_t\phi_0(t)|^2 dt +  \sum_{n \geq 1} \left ( R  \int_{0}^T   |\pl_t\phi_n(t)|^2 dt  
+  \frac{n^2}{R} \int_{0}^T  |\phi_n(t)|^2 dt \right )\\
& =  \frac {R} {2}   \int_{0}^T  |\pl_t\phi_0|^2 dt +\sum_{n \geq 1} \left (\frac{R}{4 n}  \int_{0}^T   |\pl_tx_n(t)  |^2 dt  +    \frac{n}{4R} \int_{0}^T     |  x_n(t) |^2 dt \right )  \\
& +\sum_{n \geq 1} \left ( \frac {R} {4n} \int_{0}^T   |\pl_t y_n(t)|^2 dt  +  \frac{ n }{4R}  \int_{0}^T  |  y_n(t) |^2 dt \right ).
\end{align*}
Let us consider the formal Gaussian path integral 
\[ \int F(\phi)e^{-\frac{1}{4\pi} \int_{\mc{C}^+_{R,T}}  |\nabla \phi|^2dtd\theta}d\phi\]
on the cylinder with the condition that 
\[\phi(0,\theta)=c+\sum_{n\in \Z\setminus \{0\}}\varphi_ne^{in\frac{\theta}{R}}, \quad 
\varphi_n:=\left\{\begin{array}{ll}
\frac{x_n+iy_n}{2\sqrt{n}} & n>0,\\
\frac{x_{|n|}-iy_{|n|}}{2\sqrt{|n|}} & n<0,
\end{array}\right.\]
for some real $x_n,y_n$. Then, by the discussion above,
the field $\phi$ can be represented under the form 
\[ \phi^R(t,\theta)=\phi_0^R(t)+\sum_{n\in \Z\setminus \{0\}}\varphi_n^R(t)e^{in\frac{\theta}{R}},\] 
where 
\[\phi_0^R(t):=c+B_{t/R}, \quad  \varphi^R_n(t):=\Bigg\{\begin{array}{ll}
\frac{x_n^R(t)+iy_n^R(t)}{2\sqrt{n}} & n>0,\\
\frac{x_{|n|}^{R}(t)-iy^R_{|n|}(t)}{2\sqrt{|n|}} & n<0,
\end{array}\]
with $x_n^R(t),y_n^R(t)$ the Ornstein-Uhlenbeck processes with parameters $\mu_n= \sqrt{\frac{n}{2R}}$ and $\sigma_n=\sqrt{\frac{2n}{R}}$.

We now describe how to interpret the path integral on the forward cylinder $\mathcal{C}_{R,T}^+$ (we will relate it to the path integral on $\mathcal{C}_{R,T}$ by translations below). For $(c,\varphi)\in \R\times H_0^{-s}(\T_R)$ fixed with $\varphi=\sum_{n>0}\frac{(x_n+iy_n)}{2\sqrt{n}}e^{i\frac{n}{R}\theta}+\sum_{n>0}\frac{(x_n-iy_n)}{2\sqrt{n}}e^{-i\frac{n}{R}\theta}$, 
 let us define the following space of distributions on the cylinder with fixed values $(c,\varphi)\in \R\times H_0^{-s}(\T_R)$ at $t=0$ for some fixed $s>0$:
 \[\begin{split}
  E_{c,\varphi}(\mc{C}^+_{R,T})=\Big\{ \phi(t,\theta)=& x_0(t)+ \sum_{n\not=0} \frac{x_{|n|}(t)+i\,{\rm sign}(n)y_{|n|}(t)}{2\sqrt{|n|}}e^{in\frac{\theta}{R}} \in H^{-s}(\mc{C}^+_{R,T})\\
 &  \quad \,\Big|\, x_0(t)\in C^0_{c}([0,T]), \forall n\geq 1,\,  x_n(t)\in C^0_{x_n}([0,T]), y_n(t)\in C^0_{y_n}([0,T])\Big\}.
 \end{split}\]
 The formal Gaussian measure on the cylinder $\mc{C}^+_{R,T}$ is defined for bounded measurable $F$ by 
 \[ \int_{E_{c,\varphi}(\mc{C}^+_{R,T})}F(\phi)e^{-\frac{1}{4\pi}\int_{\mc{C}^+_{R,T}}|\nabla \phi|^2dtd\theta}D\phi:= \E_\varphi[ F( \phi^R(t,\theta))],\]
 where  $\phi^R(t,\theta)$ is the random variable in $E_{c,\varphi}(\mc{C}^+_{R,T})$ defined by
 \[ \phi^R(t,\theta):=c+B_{\frac{t}{R}}+\sum_{n=1}^\infty \frac{x_n(\frac{t}{R})+iy_n(\frac{t}{R})}{2\sqrt{n}}e^{in\frac{\theta}{R}}+ 
 \frac{x_n(\frac{t}{R})-iy_n(\frac{t}{R})}{2\sqrt{n}}e^{-in\frac{\theta}{R}},\]
with $x_n(t),y_n(t)$ the Ornstein-Uhlenbeck processes defined above for $R=1$, and the expectation is conditional on $\varphi$.

 Now, to work on the strip $\mc{C}_{R,T}=[-T,T]\times \T_R$, we view it as a shift by $t\mapsto t-T$ of $\mc{C}^+_{R,2T}$ and therefore represent the path integral on $\mc{C}_{R,T}$ as 
 \[\int_{E_{c,\varphi}(\mc{C}_{R,T})}F(\phi)e^{-\frac{1}{4\pi}\int_{\mc{C}_{R,T}}|\nabla \phi|^2dtd\theta}D\phi:= 
 \E[ F( \phi^R(t,\theta))],\]
 where  $E_{c,\varphi}(\mc{C}_{R,T})=\{ \phi(t+T,\theta) \,|\, \phi \in  E_{c,\varphi}(\mc{C}^+_{R,2T})\}$ and now the random variable
 $\phi^R$ 
 used to define the path integral is 
 \[ \phi^R(t,\theta)=c+B_{\frac{t+T}{R}}+\sum_{n=1}^\infty \frac{x_n(\frac{t+T}{R})+iy_n(\frac{t+T}{R})}{2\sqrt{n}}e^{in\frac{\theta}{R}}+\sum_{n=1}^\infty \frac{x_n(\frac{t+T}{R})-iy_n(\frac{t+T}{R})}{2\sqrt{n}}e^{-in\frac{\theta}{R}}.\]

\bibliographystyle{alpha}
\bibliography{Sinh}
\end{document}